\newtheorem{theorem}{Theorem}[section]
\newtheorem{lemma}[theorem]{Lemma}
\newtheorem{corollary}[theorem]{Corollary}
\newtheorem{proposition}[theorem]{Proposition}
\theoremstyle{remark}
\newtheorem{remark}[theorem]{Remark}
\theoremstyle{definition}
\newtheorem{definition}[theorem]{Definition}
\newcommand{\bbN}{{\mathbb N}}
\newcommand{\bbZ}{{\mathbb Z}}
\newcommand{\cB}{{\mathcal B}}
\newcommand{\cD}{{\mathcal D}}
\newcommand{\cR}{{\mathcal R}}
\newcommand{\cS}{{\mathcal S}}
\newcommand{\cT}{{\mathcal T}}
\newcommand{\Kk}{{\scriptscriptstyle{K}}}
\newcommand{\Mm}{{\scriptscriptstyle{M}}}
\newtheorem{tau-sigma}[unram]{Lemma}
\newtheorem{general element}{Lemma}[section]
\newtheorem{general congruence}{Proposition}[section]
\theoremstyle{remark}\newtheorem{any unif}[general congruence]{Remark}
\theoremstyle{plain}
\newtheorem{pipower congruence}[general congruence]{Corollary}
\newtheorem{unit congruence}[general congruence]{Corollary}
\newtheorem{ramgeneral element}{Lemma}[section]
\theoremstyle{remark}
\theoremstyle{remark}
\DeclareMathOperator{\Gal}{Gal}
\DeclareMathOperator{\Br}{Br}
\DeclareMathOperator{\Aut}{Aut}
\DeclareMathOperator{\Hom}{Hom}
\DeclareMathOperator{\End}{End}
\DeclareMathOperator{\Tr}{Tr}
\DeclareMathOperator{\Char}{char}
\author{Rachel Newton}
\title{Realising the cup-product of local Tate duality}
\date{}
\begin{document}
\maketitle
\begin{abstract}We present an explicit description, in terms of central simple algebras, of a cup-product map which occurs in the statement of local Tate duality for Galois modules of prime order $p$. Given cocycles $f$ and $g$, we construct a central simple algebra of dimension $p^2$ whose class in the Brauer group gives the cup-product $f\cup g$. This algebra is as small as possible.
\end{abstract}

%\tableofcontents

\section*{Introduction}
Let $K$ be any field and let $M$ be a $G_{\Kk}$-module of prime cardinality $p$, where $p$ is not equal to $\Char(K)$. In this paper, we give an explicit description of the following cup-product which occurs in the statement of local Tate duality:
\begin{equation}
\label{eq:Tate duality}
\cup:H^1(G_{\Kk},M)\times H^1(G_{\Kk},M^{\vee})\longrightarrow H^2(G_{\Kk},\mu_p)\cong \Br(K)[p].
\end{equation}
The main result is Theorem \ref{mainthm} where, given elements $0\neq f\in H^1(G_{\Kk},M)$ and $0\neq g\in H^1(G_{\Kk},M^{\vee})$, we construct a central simple algebra $\cD$ such that
\begin{enumerate}
\item The class of $\cD$ in $\Br(K)$ is the class of the cup-product $f\cup g$.
\item $\dim_K(\cD)=p^2$. Therefore, $\cD$ is a division algebra if and only if \mbox{$f\cup g\neq 0.$}
\end{enumerate}
%Condition 2 is a minimality condition. %It means that $\cD$ is the central simple algebra of smallest dimension over $K$ which is guaranteed to give the class of $f\cup g$ in $\Br(K)$, if we do not already know that $f\cup g=0$. It is well known how to construct a central simple algebra from a $2$-cocycle, however this central simple algebra can be very large. 
In the prime order case, the usual construction gives a central simple algebra which can have dimension as large as $p^4(p-1)^4$ in general. %But, since it represents an element in $\Br(K)[p]$, its period is just $p$. We show that for an algebra $A$ arising from the cup-product (\ref{eq:Tate duality}), the period is equal to the index and we construct a central simple algebra $\cD$ of dimension $p^2$ giving the class of $A$ in $\Br(K)$. 
Our minimisation of the dimension of the central simple algebra makes the cup-product (\ref{eq:Tate duality}) more amenable to explicit computation. %For example, if $K$ were a local field, then computing the cup-product of local Tate duality would amount to computing the Hasse invariant of the central simple algebra.

\section[The Artin-Wedderburn theorem and Tate duality]{The Artin-Wedderburn theorem and local Tate duality}
Let $K$ be a field. We will consider $K$ to be fixed throughout the paper and will use the following notation:\\
%\begin{notationII}
\begin{tabular}{ll}
$K_s$ &  a fixed separable closure of $K$\\
$G_{\Kk}$ & the absolute Galois group of $K$, $G_{\Kk}=\Gal(K_s/K)$\\
$M$ & a $G_{\Kk}$-module of cardinality $p$ prime and not divisible by $\Char(K)$\\
$\mu_p$ & the group of $p$th roots of unity in $K_s$\\
$M^{\vee}$ & the Tate dual of $M$, $M^{\vee}=\Hom(M,\mu_p)$.\\
\end{tabular}\\
For elements $f,g,\varphi,\ldots$ of cohomology groups, we often employ the notation $f_0,g_0,\varphi_0, \ldots$ to refer to a choice of representative cocycles.
%\end{notationII}
From now on, we fix $0\neq f\in H^1(G_{\Kk},M)$ and $0\neq g\in H^1(G_{\Kk},M^{\vee})$. In order to compute the cup-product $f\cup g$ as a central simple algebra, we must replace $G_{\Kk}$ with a finite Galois group. The action of $G_{\Kk}$ on $M$ gives a map 
$G_{\Kk}\rightarrow \Aut(M).$ Let $H_{\Mm}$ denote the kernel of this map and consider the inflation-restriction exact sequence
\begin{displaymath}
\xymatrix{
0\ar[r] & H^1(G_{\Kk}/H_{\Mm}, M)\ar[r]^{\quad\textrm{Inf}} & H^1(G_{\Kk},M)\ar[r]^{\textrm{Res}\quad\quad} & H^1(H_{\Mm},M)^{G_{\Kk}/H_{\Mm}}& \\
\ar[r] & H^2(G_{\Kk}/H_{\Mm}, M).\\
%& & f \ar@{|->}[r] & 0\\
}
\end{displaymath}
Observe that $G_{\Kk}/H_{\Mm}$ injects into $\Aut(M)$, which has order $p-1$. Hence, $G_{\Kk}/H_{\Mm}$ has order coprime to $\#M=p$ and $$H^1(G_{\Kk}/H_{\Mm}, M)=H^2(G_{\Kk}/H_{\Mm}, M)=0.$$ Therefore, the restriction map gives an isomorphism 
\[H^1(G_{\Kk},M)\cong H^1(H_{\Mm},M)^{G_{\Kk}/H_{\Mm}}=\Hom_{G_{\Kk}}(H_{\Mm},M).\]
Let $N_f$ denote the kernel of the restriction of $f$ to $H_{\Mm}$. Then the isomorphism above shows that $N_f\triangleleft G_{\Kk}$. Because $f\neq 0$, the injective $G_{\Kk}$-homomorphism $H_{\Mm}/N_f\rightarrow M$ induced by $f$ is also surjective. So $H_{\Mm}/N_f$ has order $p$. In the same way, we define $H_{\Mm^{\vee}}$ and $N_g$. 

\begin{lemma}
If $N_f=N_g$, then $M$ and $M^{\vee}$ are isomorphic as $G_{\Kk}$-modules.
\end{lemma}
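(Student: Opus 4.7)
The plan is to show that, under the hypothesis $N_f = N_g$, the normal subgroups $H_{\Mm}/N_f$ and $H_{\Mm^{\vee}}/N_g$ coincide inside $G_{\Kk}/N_f$ as subgroups with $G_{\Kk}$-action by conjugation. Composing the $G_{\Kk}$-equivariant isomorphisms $\bar{f}:H_{\Mm}/N_f\cong M$ and $\bar{g}:H_{\Mm^{\vee}}/N_g\cong M^{\vee}$ supplied by the inflation-restriction computation preceding the lemma will then produce the required $G_{\Kk}$-module isomorphism $M\cong M^{\vee}$.

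The key observation is one already in the setup: $G_{\Kk}/H_{\Mm}$ embeds into $\Aut(M)$, which has order $p-1$, so $[G_{\Kk}:H_{\Mm}]$ is finite and coprime to $p$. Combined with $[H_{\Mm}:N_f]=p$, this makes $G_{\Kk}/N_f$ a finite group whose $p$-part has order exactly $p$, and $H_{\Mm}/N_f$ a Sylow $p$-subgroup therein; normality in $G_{\Kk}/N_f$ follows from the normality of $H_{\Mm}$ in $G_{\Kk}$. The same reasoning applied to $M^{\vee}$ shows that $H_{\Mm^{\vee}}/N_g$ is a normal Sylow $p$-subgroup of $G_{\Kk}/N_g$.

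Writing $N:=N_f=N_g$, the finite group $G_{\Kk}/N$ therefore carries two normal Sylow $p$-subgroups, namely $H_{\Mm}/N$ and $H_{\Mm^{\vee}}/N$. Since a normal Sylow $p$-subgroup is unique whenever it exists, the two subgroups coincide, and the $G_{\Kk}$-action on each is simply conjugation inside $G_{\Kk}/N$, so they agree as $G_{\Kk}$-modules; composing $\bar{g}$ with $\bar{f}^{-1}$ then delivers the desired isomorphism $M\cong M^{\vee}$. There is no real obstacle beyond the Sylow uniqueness step, and the conceptual point is just to recognise that the hypothesis $N_f=N_g$ collapses the situation to a single finite quotient $G_{\Kk}/N$ in which $M$ and $M^{\vee}$ are forced to be realised by the same normal subgroup.
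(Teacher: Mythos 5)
Your proof is correct and follows essentially the same route as the paper's: both reduce the claim to showing $H_{\Mm}/N_f = H_{\Mm^{\vee}}/N_g$ inside the finite quotient $G_{\Kk}/N$, and both identify this equality by recognizing the unique (normal) Sylow $p$-subgroup, whose order-$p$ property follows from $[G_{\Kk}:H_{\Mm}]$ being coprime to $p$. The paper packages this via the semidirect decomposition $G_{\Kk}/N_f\cong H_{\Mm}/N_f\rtimes G_{\Kk}/H_{\Mm}$, but the content is identical.
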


\begin{proof}
We have isomorphisms of $G_{\Kk}$-modules $H_{\Mm}/N_f\rightarrow M$ and $H_{\Mm^{\vee}}/N_g\rightarrow M^{\vee}$ induced by $f$ and $g$ respectively. So it suffices to show that $H_{\Mm}/N_f=H_{\Mm^{\vee}}/N_g$. But $H_{\Mm}/N_f$ is the unique Sylow $p$-subgroup of $G_{\Kk}/N_f\cong H_{\Mm}/N_f\rtimes G_{\Kk}/H_{\Mm}$ and $H_{\Mm^{\vee}}/N_g$ is also an order $p$ subgroup of $G_{\Kk}/N_f=G_{\Kk}/N_g$.
\end{proof}

\begin{corollary}
\label{fcupg=0}
If $N_f=N_g$ and $p>2$, then $f\cup g=0$.
\end{corollary}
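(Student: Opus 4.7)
The plan is to use the previous lemma to translate the cup product into a self-pairing in $H^1(G_{\Kk},M)$, and then exploit graded commutativity: because $M$ is one-dimensional over $\Fp$, the pairing on $M\otimes M$ that arises is automatically symmetric, so the cup product is forced to be $2$-torsion and hence vanishes when $p>2$.

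First I would invoke the previous lemma to fix a $G_{\Kk}$-equivariant isomorphism $\alpha\colon M\to M^{\vee}$. Since isomorphic $G_{\Kk}$-modules have the same kernel in $G_{\Kk}$, the subgroups $H_{\Mm}$ and $H_{\Mm^{\vee}}$ coincide (as is in fact shown directly in the proof of the lemma via $H_{\Mm}/N_f=H_{\Mm^{\vee}}/N_g$); call this common subgroup $H$. Under the restriction isomorphism $H^1(G_{\Kk},M)\cong\Hom_{G_{\Kk}}(H,M)$ established earlier (and its analogue for $M^{\vee}$), $f$ corresponds to a surjection $H\to M$ with kernel $N_f$, while $\alpha_*^{-1}(g)$ corresponds to a surjection $H\to M$ with kernel $N_g=N_f$. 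Any two such surjections differ by a $G_{\Kk}$-equivariant automorphism of $M$, and $\Aut_{G_{\Kk}}(M)=\Fp^{\times}$; hence $g=c\cdot\alpha_*(f)$ in $H^1(G_{\Kk},M^{\vee})$ for some $c\in\Fp^{\times}$.

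Next, consider the composite $\pi\colon M\otimes M\xrightarrow{\mathrm{id}\otimes\alpha} M\otimes M^{\vee}\xrightarrow{\,\mathrm{ev}\,}\mu_p$, where $\mathrm{ev}$ is the evaluation pairing $m\otimes\varphi\mapsto\varphi(m)$. Choosing a generator $e$ of $M$, I find $\pi(ae\otimes be)=ab\cdot\alpha(e)(e)$, which is symmetric in $a$ and $b$; hence $\pi\circ T=\pi$, where $T$ swaps the tensor factors of $M\otimes M$. By naturality of the cup product,
\[f\cup g=c\cdot\pi_*(f\cup f),\]
where $f\cup f$ is computed in $H^2(G_{\Kk},M\otimes M)$.

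Finally, graded commutativity of cup products on $H^1\times H^1$ gives $T_*(f\cup f)=-(f\cup f)$. Applying $\pi_*$ and using $\pi\circ T=\pi$ yields $\pi_*(f\cup f)=-\pi_*(f\cup f)$, so $\pi_*(f\cup f)$ is $2$-torsion. Since it also lies in $\Br(K)[p]$ with $p$ odd, it must vanish, proving $f\cup g=0$. The substantive content is the identification of the transported pairing on $M\otimes M$ as symmetric, which hinges essentially on $M$ being one-dimensional over $\Fp$; the rest is a formal manipulation with naturality and graded commutativity of cup products.
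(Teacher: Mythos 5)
Your proof is correct and takes essentially the same route as the paper: there too one uses the preceding lemma to identify $M$ with $M^{\vee}$, deduces via the injectivity of restriction to $H_{\Mm}$ that $g$ is a scalar multiple of (the transport of) $f$, and then appeals to anti-symmetry of the cup product, which for $p>2$ forces the self-cup-product to vanish. Your argument with the symmetric pairing $\pi$ and graded commutativity simply spells out the step the paper compresses into ``anti-symmetric implies alternating''.
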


\begin{proof}
By the lemma above, we know that $M$ and $M^{\vee}$ are isomorphic as $G_{\Kk}$-modules. The cup-product map is anti-symmetric and $p>2$ so anti-symmetric implies alternating. Thus, it is enough to show that $g=nf$ for some $n\in\bbZ$. The restriction map $\textrm{Res}:H^1(G_{\Kk},M)\rightarrow H^1(H_{\Mm},M)=\Hom(H_{\Mm},M)$ is injective, so it suffices to show that $\textrm{Res}(g)=n\textrm{Res}(f)$ for some $n\in\bbZ$. Now $\textrm{Res}(f)$ and $\textrm{Res}(g)$ both have kernel $N_f$, so they both arise from isomorphisms $H_{\Mm}/N_f\rightarrow M$. But $M$ has order $p$, so any two such isomorphisms differ by a scalar multiple.
\end{proof}

Let $N=N_f\cap N_g$. Consider the inflation-restriction exact sequence

\begin{displaymath}
\xymatrix{
0\ar[r] & H^1(G_{\Kk}/N, M)\ar[r]^{\quad\textrm{Inf}} & H^1(G_{\Kk},M)\ar[r]^{\textrm{Res}} & H^1(N,M).
%& & f \ar@{|->}[r] & 0\\
}
\end{displaymath}
By definition of $N$, the element $f$ is in the kernel of restriction to $N$. So $f$ comes from an element of $H^1(G_{\Kk}/N, M)$, which we will also call $f$. Similarly, $g$ comes from an element of $H^1(G_{\Kk}/N, M)$, which we will also call $g$. Now, the properties of the cup-product mean that the following diagram commutes:
\begin{displaymath}
\xymatrix{
H^1(G_{\Kk}, M)\times  H^1(G_{\Kk},M^{\vee})\ar[r]^{\quad \quad\quad\quad\quad \cup}  & H^2(G_{\Kk},\mu_p)\\
H^1(G_{\Kk}/N, M)\ar@<-9ex>[u]^{\textrm{Inf}}\times  H^1(G_{\Kk}/N,M^{\vee})\ar@<7ex>[u]^{\textrm{Inf}}\ar[r]^{\quad\quad \quad\quad\quad\cup} &  H^2(G_{\Kk}/N,\mu_p)\ar[u]^{\textrm{Inf}}\\
}
\end{displaymath}

Therefore, we can reduce to studying the cup-product
\begin{equation}
\label{eq:Tate duality finite}
\cup:H^1(G_{\Kk}/N,M)\times H^1(G_{\Kk}/N,M^{\vee})\longrightarrow H^2(G_{\Kk}/N,\mu_p).
\end{equation}
Let $L=K_s^N$ so that $\Gal(L/K)=G_{\Kk}/N$. Thus, $L/K$ is a finite Galois extension of degree dividing $p^2(p-1)^2$. Note that the action of $G_{\Kk}$ on $M^{\vee}=\Hom(M,\mu_p)$ is given by $(s\cdot\phi)(m)=s\cdot\phi(s^{-1}\cdot m)$ for all $s\in G_{\Kk}$ and all $m\in M$. Hence, $\mu_p$ is fixed by all elements in $H_{\Mm}\cap H_{\Mm^{\vee}}$, so $\mu_p\subset L^*$. We have the following commutative diagram:
\begin{displaymath}
\xymatrix{
 H^2(G_{\Kk},\mu_p)\ar@{^{(}->}[r]& H^2(G_{\Kk}, K_s^*)\ar[r]^{\quad\cong} & \Br(K)\\
 H^2(\Gal(L/K),\mu_p)\ar[u]^{\textrm{Inf}}\ar[r] & H^2(\Gal(L/K), L^*)\ar[r]^{\quad\quad\cong} & \Br(L/K)\ar@{^{(}->}[u]\\
}
\end{displaymath}
where $\Br(L/K)$ denotes the subgroup of $\Br(K)$ consisting of the classes of central simple algebras over $K$ which are split by $L/K$. The isomorphism $H^2(\Gal(L/K), L^*)\rightarrow \Br(L/K)$ is induced by the map sending a $2$-cocycle $\vartheta$ to the central simple algebra $A_{\vartheta}$ as defined below.
\begin{definition}
Let $L/K$ be a finite Galois extension and let $\vartheta\in Z^2(\Gal(L/K),L^*)$ be a $2$-cocycle. Define the $K$-algebra $A_{\vartheta}$ to be the left $L$-vector space with basis $\{e_s\}_{s\in\Gal(L/K)}$ and multiplication given by
\begin{eqnarray*}
e_s x=s(x)e_s\ \quad\forall\ s\in \Gal(L/K), \ \quad\forall\ x\in L\\
e_se_t=\vartheta(s,t)e_{st}\ \quad\forall\ s,t\in\Gal(L/K).\\
\end{eqnarray*}
$A_{\vartheta}$ is a central simple algebra of dimension $[L:K]^2$ over $K$. See, for example, \cite{Reiner}, where this is Theorem 29.12. 
\end{definition}
\begin{definition}
Let $\varphi=f\cup g$. Fix representative cocycles $f_0, g_0$ for $f,g$ respectively. The formula given in the remark at the end of \S2.4 of \cite{Pillons} tells us that a representative $2$-cocycle for $\varphi$ is
$\varphi_0:\Gal(L/K)\times \Gal(L/K)\rightarrow \mu_p$, given by
\begin{equation}
\label{phi}
\varphi_0(s,t)=(s\cdot g_0(t))(f_0(s)).
\end{equation}
\end{definition}
\begin{lemma}
\label{fcupg for 2}
If $N_f = N_g$ and $p=2$, then $f\cup g$ corresponds to a quaternion algebra generated by $x,y$ such that $K(x)\cong K_s^{N_f}$, $x^2\in K^*$, $y^2=-1$ and $yx=-xy$. Consequently, $f\cup g=0$ if and only if $-1\in N_{K_s^{N_f}/K}(K_s^{N_f})$.
\end{lemma}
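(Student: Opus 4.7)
The plan is to exploit the fact that when $p=2$, $\Aut(M)$ has order $p-1=1$, so $H_{\Mm}=H_{\Mm^{\vee}}=G_{\Kk}$ and both $M$ and $M^{\vee}$ carry the trivial $G_{\Kk}$-action. Under the hypothesis $N_f=N_g$, we therefore have $N=N_f$, and $L=K_s^{N_f}$ is a degree-$2$ Galois extension of $K$, with $\Gal(L/K)=\{1,\sigma\}$. Since $\Char(K)\neq 2$, Kummer theory produces $a\in K^*\setminus (K^*)^2$ and $x\in L$ with $x^2=a$ and $L=K(x)$, giving the first two properties demanded of the quaternion description.

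Next I would compute the cup-product cocycle $\varphi_0$ from formula (\ref{phi}). Since the $G_{\Kk}$-actions on $M$, $M^{\vee}$ and $\mu_2$ are all trivial, the formula collapses to $\varphi_0(s,t)=g_0(t)(f_0(s))$. After inflation, $f_0$ and $g_0$ become the unique nontrivial homomorphisms $\Gal(L/K)\to M$ and $\Gal(L/K)\to M^{\vee}$ respectively; writing $m$ for the nonzero element of $M$ and $\chi$ for the nontrivial element of $M^{\vee}$, one has $\chi(m)=-1$. Evaluating $\varphi_0$ on the four pairs in $\Gal(L/K)\times \Gal(L/K)$ then yields $\varphi_0(\sigma,\sigma)=-1$ and $\varphi_0\equiv 1$ on the other three pairs.

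Feeding this $\varphi_0$ into the construction of $A_{\varphi}$ gives a left $L$-vector space with basis $\{1,e_{\sigma}\}$, hence a $K$-basis $\{1,x,e_{\sigma},xe_{\sigma}\}$. Setting $y=e_{\sigma}$, the defining relations $e_{\sigma} z=\sigma(z)e_{\sigma}$ for $z\in L$ and $e_{\sigma}^2=\varphi_0(\sigma,\sigma)=-1$ translate respectively to $yx=\sigma(x)y=-xy$ and $y^2=-1$, producing the quaternion algebra with the advertised presentation.

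For the ``if and only if'' consequence, I would invoke the standard criterion that the quaternion algebra generated by $x,y$ with $x^2=a$, $y^2=b$, $yx=-xy$ is split in $\Br(K)$ precisely when $b\in N_{K(\sqrt{a})/K}(K(\sqrt{a})^*)$. Applied with $b=-1$ and $K(\sqrt{a})=L=K_s^{N_f}$, this yields $f\cup g=0$ if and only if $-1\in N_{K_s^{N_f}/K}(K_s^{N_f})$. The main obstacle is really just the cocycle computation in the second step; once the trivial Galois action is exploited everything else is formal.
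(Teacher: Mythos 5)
Your proposal is correct and follows essentially the same route as the paper: the paper's proof simply cites the crossed-product construction of $A_{\varphi_0}$ (which your cocycle computation $\varphi_0(\sigma,\sigma)=-1$, $\varphi_0=1$ otherwise, makes explicit, yielding $y=e_{\sigma}$ with $y^2=-1$, $yx=-xy$) together with the norm criterion of Jacobson, Theorem 8.14, which is exactly your splitting criterion $-1\in N_{K_s^{N_f}/K}(K_s^{N_f})$. You have merely written out the details the paper leaves implicit, so no further changes are needed.
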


\begin{proof}
This follows from the explicit construction of a central simple algebra given above. By \cite{Jacobson}, Theorem 8.14, the quaternion algebra $A_{\varphi_0}$ is a division ring if and only if \mbox{$y^2\notin N_{K(x)/K}(K(x)).$} 
\end{proof}

Having dealt with the case $N_f=N_g$ for all $p$, henceforth we assume that $N_f\neq N_g$. 
Below, we state the main result which will be proved in this paper. 

\begin{theorem}
\label{mainthm}
Write $K_s^{\ker(f_0)}=K(\alpha)$, $K_s^{\ker(g_0)}=K(\beta)$ with $\Tr_{K(\alpha)/K}(\alpha)=0=\Tr_{K(\beta)/K}(\beta)$. Let $\sigma\in G_{\Kk}$ be such that $\sigma$ fixes the normal closure of $K(\beta,\mu_p)$ and $\sigma(\alpha)\neq\alpha$. Likewise, let $\rho\in G_{\Kk}$ act trivially on the normal closure of $K(\alpha,\mu_p)$ but non-trivially on $\beta$. Let $\zeta=(g_0(\rho))(f_0(\sigma))\in\mu_p$. Let $h_{ij}=\sum_{\ell=0}^{p-1}{\zeta^{j\ell}\sigma^{\ell}(\alpha^i)}$. Write $\rho^{j}(\beta)=\sum_{i=0}^{p-1}{m_{i j}\beta^{i}}$ for $m_{i j}\in K_s^{H_{M^{\vee}}}$. %Let $f\cup g$ denote the image of the pair $(f,g)$ under the cup-product map
%$$\cup:H^1(G_{\Kk},M)\times H^1(G_{\Kk},M^{\vee})\longrightarrow H^2(G_{\Kk},\mu_p).$$
Let $\cD$ be the left $K(\beta)$-vector space with basis $\{z^j\}_{0\leq j\leq p-1}$, where $z$ satisfies the same minimal polynomial over $K$ as $\alpha$, with multiplication
$$z\beta=\sum_{i,j=0}^{p-1}{c_{ij}\beta^iz^j}$$
where the matrix $(c_{ij})_{i,j}=(h_{1j} m_{ij})_{i,j}(h_{ij})_{i,j}^{-1}$. Then $\cD$ is a central simple algebra of dimension $p^2$ over $K$ which gives the class of $f\cup g$ in $\Br(K)$.
\end{theorem}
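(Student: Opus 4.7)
The plan is to realize $\cD$ as a central simple $K$-subalgebra of dimension $p^2$ inside the crossed-product algebra $A_{\varphi_0}$ of dimension $[L:K]^2$ whose Brauer class is by construction $f\cup g$. The element $\beta$ is available via the canonical embedding $L \hookrightarrow A_{\varphi_0}$, $x \mapsto xe_1$; the core of the proof is the construction of a companion element $z \in A_{\varphi_0}$ which has the same minimal polynomial over $K$ as $\alpha$ and satisfies the prescribed commutation $z\beta = \sum_{i,j}c_{ij}\beta^iz^j$. Once $\cD$ is realized as such a subalgebra, the identification $[\cD] = [A_{\varphi_0}] = f\cup g$ will follow from the double-centralizer theorem, provided the centralizer $C_{A_{\varphi_0}}(\cD)$ of $K$-dimension $([L:K]/p)^2$ is split; this can be verified by an explicit identification of $C_{A_{\varphi_0}}(\cD)$ with a matrix algebra over $K$.

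Because $\sigma$ fixes both $\beta$ and $\mu_p$, any element built purely from $e_{\sigma^\ell}$'s would commute with $\beta$; hence $z$ must involve $e_{\rho}$ essentially. The structure of the target matrix $(c_{ij}) = (h_{1j}m_{ij})(h_{ij})^{-1}$, with Lagrange resolvents $h_{ij}$ for the cyclic $\sigma$-action on $K(\alpha)$ paired against the matrix $(m_{ij})$ encoding the $\rho$-action on $K(\beta)$, suggests $z$ of the form $\sum_j \xi_j e_{\rho^j}$ with coefficients $\xi_j \in L$ normalized so that $z$ satisfies the minimal polynomial of $\alpha$. The spectral identity $\sigma^r(h_{ij}) = \zeta^{-jr}h_{ij}$, together with the defining relation $\zeta = (g_0(\rho))(f_0(\sigma))$ which measures the noncommutativity of $e_\sigma$ and $e_\rho$ inside $A_{\varphi_0}$, will drive the normalization.

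Three verifications then remain: (i) $z$ satisfies the minimal polynomial of $\alpha$ over $K$; (ii) the products $\{\beta^iz^j\}_{0\le i,j<p}$ are $K$-linearly independent in $A_{\varphi_0}$; and (iii) the commutation $z\beta = \sum_{i,j}c_{ij}\beta^iz^j$ holds with the prescribed matrix. The main obstacle is (iii): expanding $z\cdot\beta$ in $A_{\varphi_0}$ produces $\sum_{i,j}\xi_j m_{ij}\beta^i e_{\rho^j}$, and converting back into the basis $\{\beta^iz^j\}$ requires inverting the change of basis from $\{e_{\rho^j}\}$ to $\{z^j\}$, which is governed precisely by the resolvent matrix $(h_{ij})$; the factorization $(h_{1j}m_{ij})(h_{ij})^{-1}$ then drops out after careful bookkeeping of the cocycle $\varphi_0$, the twisted $\sigma$-action on $K(\alpha)$, and the $\rho$-action on $K(\beta)$. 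Centrality and simplicity of the resulting $K$-subalgebra follow once (i)--(iii) are in place, because $K(\beta) \subset \cD$ is a maximal commutative subfield of degree $p$ on which $z$ acts by an order-$p$ automorphism induced from $\rho$, forcing the center to be $K$ and precluding any proper two-sided ideal.
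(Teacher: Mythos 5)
Your computational core (taking $z=p^{-1}\sum_i a_ie_{\rho^i}$ with Lagrange-resolvent coefficients, matching its minimal polynomial with that of $\alpha$, and extracting the structure constants by inverting the resolvent matrix $(h_{ij})$) is exactly the route of the paper's Sections 3--5, and the linear independence of $\{\beta^iz^j\}$ can indeed be settled by invertibility of $(h_{ij})$. The genuine gap is in how you identify the Brauer class. You reduce everything to the assertion that $C_{A_{\varphi_0}}(\cD)$ is split, offering only that this ``can be verified by an explicit identification with a matrix algebra over $K$''. But by the double-centralizer theorem $A_{\varphi_0}\cong\cD\otimes_K C_{A_{\varphi_0}}(\cD)$, so splitness of the centralizer is \emph{equivalent} to the equality $[\cD]=[A_{\varphi_0}]=f\cup g$ that you are trying to prove; you have deferred the crux, not proved it, and no easy explicit splitting of this centralizer is in sight. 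The paper handles this step by a different device, which is the idea missing from your proposal: it introduces the left ideal $\cS=A_{\varphi_0}\theta$ with $\theta=\sum_{t\in\Gal(L/K_s^{\ker(g_0)})}e_t$, shows (using that $K_s^{\ker(f_0)}$ and $K_s^{\ker(g_0)}$ split $A_{\varphi_0}$, so the index is $p$) that $\cS$ is a minimal left ideal when the class is nontrivial, and defines $\cD=\End_{A_{\varphi_0}}(\cS)^{\textrm{opp}}$, which is Brauer-equivalent to $A_{\varphi_0}$ by Artin--Wedderburn; only afterwards is this endomorphism ring identified with the subalgebra of $B=\{xe_{\rho^i}\}$ commuting with the $e_t$, generated by $K(\beta)$ and $z$.

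A second, related flaw is your mechanism for centrality and simplicity. You claim $z$ acts on $K(\beta)$ by an order-$p$ automorphism induced from $\rho$, but $K(\beta)=K_s^{\ker(g_0)}$ is in general not Galois over $K$ (when $G_{\Kk}$ acts nontrivially on $M^{\vee}$ it admits no automorphism of order $p$), and conjugation by $z$ does not preserve $K(\beta)$: the coefficients $m_{ij}$ lie in $K_s^{H_{\Mm^{\vee}}}$, not in $K$, and the multiplication rule involves all powers $z^j$ precisely because $z\beta z^{-1}\notin K(\beta)$ in general. The cyclic-algebra picture you invoke is valid only in the special situation of the paper's final example, where $K_s^{H_{\Mm}}=K(\mu_p)$. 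In the paper, centrality and simplicity of $\cD$ are never deduced from the presentation at all; they come for free from the realization $\cD=\End_{A_{\varphi_0}}(\cS)^{\textrm{opp}}$ of a module over a central simple algebra. If you insist on working directly with the abstract presentation, you need an independent argument that it defines a central simple algebra of dimension $p^2$, and that argument is not supplied by your proposal.
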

%Note that if $K_s^{\ker(f_0)}$ and $K_s^{\ker(g_0)}$ are isomorphic extensions of $K$ then they have the same normal closure, so $N_f=N_g$. If, in addition, $p>2$ then $f\cup g=0$ by Corollary \ref{fcupg=0}. The case where $p=2$ and $K_s^{\ker(f_0)}$ and $K_s^{\ker(g_0)}$ are isomorphic extensions of $K$ is dealt with in Lemma \ref{fcupg for 2}. The general case for $p=2$ is particularly easy to describe. 
\begin{corollary}
Suppose that $p=2$. Then $f\cup g$ corresponds to a quaternion algebra over $K$, generated by two elements $x$ and $y$ such that $K(x)\cong K_s^{\ker(g_0)}$ and $K(y)\cong K_s^{\ker(f_0)}$, with $x^2,y^2\in K$ and $yx=-xy$. Consequently, $f\cup g$ is trivial if and only if $x^2\in N_{K(y)/K}(K(y))$, if and only if $y^2\in N_{K(x)/K}(K(x))$.
\end{corollary}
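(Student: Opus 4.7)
The plan is to specialise Theorem \ref{mainthm} to $p=2$, carry out the (very short) matrix computation explicitly, and then invoke the classical criterion for splitting of quaternion algebras. Since $\Char(K)\neq 2$, we have $\mu_2=\{\pm 1\}\subset K$ from the outset, which collapses several pieces of data in the theorem.

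First I would record the simplifications. When $p=2$, $\Aut(M)=1$, so $H_{\Mm}=H_{\Mm^{\vee}}=G_{\Kk}$ and $f_0,g_0$ are just homomorphisms $G_{\Kk}\to\bbZ/2\bbZ$. The extensions $K_s^{\ker(f_0)}=K(\alpha)$ and $K_s^{\ker(g_0)}=K(\beta)$ are quadratic, and the trace-zero conditions force $\sigma(\alpha)=-\alpha$ and $\rho(\beta)=-\beta$, so $\alpha^2,\beta^2\in K$. Because $\mu_2\subset K$, the normal closures appearing in the hypotheses on $\sigma$ and $\rho$ are simply $K(\beta)$ and $K(\alpha)$. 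The assumption $N_f\neq N_g$ forces $K(\alpha)\neq K(\beta)$, so such $\sigma$ and $\rho$ exist. Since $\sigma\notin\ker(f_0)$ and $\rho\notin\ker(g_0)$, both $f_0(\sigma)$ and $g_0(\rho)$ are the non-trivial elements of $M$ and $M^{\vee}$ respectively, and pairing them yields $\zeta=-1$.

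Plugging in, the matrix $(h_{ij})_{i,j}$ becomes diagonal with entries $2$ and $2\alpha$, while $(m_{ij})_{i,j}=\left(\begin{smallmatrix} 0 & 0 \\ 1 & -1 \end{smallmatrix}\right)$. The product $(h_{1j}m_{ij})_{i,j}(h_{ij})_{i,j}^{-1}$ then has only one non-zero entry, $c_{11}=-1$, so the multiplication rule in $\cD$ reduces to $z\beta=-\beta z$, while $z$ still satisfies $z^2=\alpha^2\in K$. Setting $x=\beta$ and $y=z$ yields the asserted quaternion algebra with $K(x)\cong K_s^{\ker(g_0)}$, $K(y)\cong K_s^{\ker(f_0)}$, $x^2,y^2\in K$, and $yx=-xy$.

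For the triviality statement, $\cD$ is a $4$-dimensional central simple $K$-algebra, so by Artin--Wedderburn it is either a division ring or $M_2(K)$, the latter case being exactly the vanishing of the Brauer class $f\cup g$. The standard quaternion-algebra criterion (Jacobson, Theorem 8.14, as already invoked in the proof of Lemma \ref{fcupg for 2}) then gives $\cD\cong M_2(K)$ iff $y^2\in N_{K(x)/K}(K(x))$; swapping the roles of $x$ and $y$ (which is legitimate because the defining relations are symmetric under $x\leftrightarrow y$) produces the equivalent condition $x^2\in N_{K(y)/K}(K(y))$. There is no serious obstacle; the only point requiring care is the bookkeeping of the row/column conventions in the matrix product defining the $c_{ij}$.
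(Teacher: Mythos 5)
Your proposal is correct and follows the same route as the paper: the paper's proof simply says the corollary is immediate from Theorem \ref{mainthm} together with the norm criterion of \cite{Jacobson}, Theorem 8.14, and your argument is exactly that specialisation, just with the $p=2$ matrix computation ($\zeta=-1$, $(h_{ij})=\mathrm{diag}(2,2\alpha)$, $c_{11}=-1$, hence $z\beta=-\beta z$) written out explicitly. The bookkeeping and the symmetry argument for the two norm conditions are both accurate, so there is nothing to correct.
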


\begin{proof}
This follows immediately from the Theorem \ref{mainthm}. The quaternion algebra is a division ring if and only if $x^2\notin N_{K(y)/K}(K(y))$, if and only if $y^2\notin N_{K(x)/K}(K(x))$ by \cite{Jacobson}, Theorem 8.14.
\end{proof}

The algebra $A_{\varphi_0}$ has dimension at most $p^4(p-1)^4$ over $K$. The Artin-Wedderburn Theorem tells us that $A_{\varphi_0}\cong M_n(D)$ for some $n\in \bbN$ and some division algebra $D$. The quantity $\sqrt{\dim_K(D)}$ is called the \textit{index} of $A_{\varphi_0}$.

\begin{lemma}
$K_s^{\ker(f_0)}/K$ and $K_s^{\ker(g_0)}/K$ are degree $p$ subextensions of $L$ which split $A_{\varphi_0}$.
\end{lemma}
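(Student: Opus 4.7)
The plan is to construct cocycle representatives $f_0, g_0$ whose set-theoretic kernels are index-$p$ subgroups of $G_{\Kk}$ containing $N$, and then to deduce the splitting property from the vanishing of $\varphi|_{\ker(f_0)}$ and $\varphi|_{\ker(g_0)}$ by functoriality of cup-product under restriction.

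For the construction, I will use that $H_{\Mm}/N_f$ has order $p$ while $G_{\Kk}/H_{\Mm}$ has order coprime to $p$. By Schur--Zassenhaus applied to the finite quotient $G_{\Kk}/N_f$, the extension
\[1 \to H_{\Mm}/N_f \to G_{\Kk}/N_f \to G_{\Kk}/H_{\Mm} \to 1\]
splits, yielding a complement $T/N_f$ of $H_{\Mm}/N_f$ in $G_{\Kk}/N_f$, where $T$ is a subgroup of $G_{\Kk}$ of index $p$ containing $N_f$. I will then define $f_0 \colon G_{\Kk} \to M$ by sending $s$ to $\bar f(h_s)$, where $h_s \in H_{\Mm}/N_f$ is the first component of $sN_f$ under the decomposition $G_{\Kk}/N_f = (H_{\Mm}/N_f) \rtimes (T/N_f)$, and $\bar f \colon H_{\Mm}/N_f \xrightarrow{\sim} M$ is the isomorphism induced by $f$. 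A direct calculation (using that $T/N_f$ acts on $H_{\Mm}/N_f$ the same way $G_{\Kk}/H_{\Mm}$ acts on $M$ via the splitting) shows that $f_0$ is a $1$-cocycle whose restriction to $H_{\Mm}$ agrees with $\bar f$, so $f_0$ represents the class $f$. By construction $\ker(f_0) = T$, which has index $p$ in $G_{\Kk}$, and $N \subset N_f \subset T$, so $K_s^{\ker(f_0)} \subset L$ and $[K_s^{\ker(f_0)}:K]=p$. The symmetric construction applied to $H_{\Mm^{\vee}}/N_g \subset G_{\Kk}/N_g$ supplies $g_0$.

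For the splitting, I appeal to compatibility of cup-product with restriction to the subgroup $\ker(f_0)$:
\[\varphi|_{\ker(f_0)} = \big(f|_{\ker(f_0)}\big) \cup \big(g|_{\ker(f_0)}\big) = 0 \cup \big(g|_{\ker(f_0)}\big) = 0 \text{ in } H^2(\ker(f_0), \mu_p),\]
since $f_0$ vanishes identically on $\ker(f_0)$. Via the embedding $H^2(\ker(f_0), \mu_p) \hookrightarrow H^2(\ker(f_0), K_s^*) = \Br(K_s^{\ker(f_0)})$, this says that $[A_{\varphi_0}]$ restricts to zero in $\Br(K_s^{\ker(f_0)})$, i.e.\ $K_s^{\ker(f_0)}$ splits $A_{\varphi_0}$. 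Interchanging the roles of $f$ and $g$ handles $K_s^{\ker(g_0)}$. The main obstacle is the initial construction --- producing cocycle representatives whose set-theoretic kernels are subgroups of precisely index $p$; once they are in hand, everything else is formal from restriction-cup-product compatibility and the standard identification of $H^2$ with the Brauer group.
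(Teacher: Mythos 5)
Your second half (the splitting claim) is fine and is essentially the paper's own argument: the class $f$ restricts to zero on $\ker(f_0)$ because the cocycle $f_0$ vanishes identically there, cup products commute with restriction, and the resulting vanishing in $H^2(\ker(f_0),\mu_p)$ translates, via the identification of $H^2$ with the Brauer group, into $K_s^{\ker(f_0)}$ splitting $A_{\varphi_0}$.

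The gap is in the first half. The lemma concerns the representative cocycles $f_0,g_0$ that were \emph{fixed earlier} and used to define $\varphi_0$ and hence $A_{\varphi_0}$, $\theta$, $\cS$; it asserts that \emph{their} kernels have index $p$. You instead build new representatives by Schur--Zassenhaus and prove the index-$p$ property only for those specially constructed cocycles. That establishes an existence statement (``some representative has an index-$p$ kernel''), not the lemma; and if you silently replace $f_0,g_0$ by your representatives, then $\varphi_0$ and $A_{\varphi_0}$ change as well, which your argument does not address (the paper's later remark, that modifying $f_0$ by a coboundary conjugates $\ker(f_0)$, shows it really does work with an arbitrary fixed representative). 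The step you call the main obstacle is in fact automatic and needs no construction: for \emph{any} representative, the set-theoretic kernel of a $1$-cocycle is a subgroup ($f_0(st)=f_0(s)+s\cdot f_0(t)$), the map $s\ker(f_0)\mapsto f_0(s)$ is a well-defined injection of left cosets into $M$, and it is surjective because $f_0|_{H_{\Mm}}$ is the homomorphism $\mathrm{Res}(f)$, which is onto $M$ since $f\neq 0$; hence $[G_{\Kk}:\ker(f_0)]=p$, and $N\subset N_f\subset\ker(f_0)$ gives $K_s^{\ker(f_0)}\subset L$. Replacing your Schur--Zassenhaus construction by this observation (applied to the given $f_0$ and $g_0$) closes the gap; your construction itself is internally correct but proves the wrong statement.
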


\begin{proof}
First, note that $\ker(f_0)$ is a subgroup of $G_{\Kk}$ because $f_0$ is a $1$-cocycle. Also, $f_0$ defines an injection from the left cosets of $\ker(f_0)$ in $G_{\Kk}$ to $M$. This injection is also a surjection because the restriction of $f$ to $H_{\Mm}$ surjects onto $M$. Thus, $K_s^{\ker(f_0)}/K$ is a degree $p$ extension. Since $N\subset N_f\subset \ker(f_0)$, we have $K_s^{\ker(f_0)}\subset L$. The following diagram commutes:
\begin{equation}
\label{res is tensor}
\xymatrix{
 H^2(\Gal(L/K), L^*)\ar[d]^{\textrm{Res}}\ar[r]^{\quad\quad\cong} & \Br(L/K)\ar[d]\\
 H^2(\Gal(L/K_s^{\ker(f_0)}), L^*)\ar[r]^{\quad\quad\cong} & \Br\bigl(L/K_s^{\ker(f_0)}\bigr)\\
 }
\end{equation}
where the map $\Br(L/K)\rightarrow\Br\bigl(L/K_s^{\ker(f_0)}\bigr)$ is induced by $A\mapsto A\otimes_K K_s^{\ker(f_0)}$. The restriction of $f$ to $\ker(f_0)$ is trivial in $H^1(\ker(f_0), M)$, and the cup-product commutes with the restriction homomorphism. So we have
\[\textrm{Res}(f\cup g)=\textrm{Res}(f)\cup\textrm{Res}(g)=0\cup\textrm{Res}(g)=0.\]
Therefore, diagram (\ref{res is tensor}) shows that $A_{\varphi_0}\otimes_K K_s^{\ker(f_0)}$ is trivial in $\Br(L/K_s^{\ker(f_0)})$. In other words, $K_s^{\ker(f_0)}$ splits $A_{\varphi_0}$. The argument for $K_s^{\ker(g_0)}$ is analogous.
\end{proof}

\begin{remark}
If $f_0$ is modified by a coboundary, the subgroup $\ker(f_0)$ is conjugated by an element of $G_{\Kk}$. Thus, the embedding of $K_s^{\ker(f_0)}$ in $L$ is changed. But the $K$-isomorphism class of the field $K_s^{\ker(f_0)}$ only depends on $f$.
\end{remark}

\begin{corollary}
Suppose that the class of $A_{\varphi_0}$ in $\Br(K)$ is non-trivial. Then $A_{\varphi_0}$ is isomorphic to $M_n(D)$, where $D$ is a central division algebra over $K$ of dimension $p^2$ and $n=p^{-1}[L:K]$. Thus, the index of $A_{\varphi_0}$ is equal to its period, $p$. Moreover, $K_s^{\ker(f_0)}$ and $K_s^{\ker(g_0)}$ embed into $D$ as maximal commutative subalgebras.
\end{corollary}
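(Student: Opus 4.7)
The plan is to deduce all four conclusions directly from the Artin--Wedderburn theorem combined with the splitting statement of the preceding lemma. Write $A_{\varphi_0}\cong M_n(D)$ for some central division $K$-algebra $D$ and some $n\geq 1$; then the whole problem reduces to pinning down $\dim_K D$, which by definition is the square of the index of $A_{\varphi_0}$.

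First I would bound the period from above. Since the class $\varphi=f\cup g$ lies in $H^2(G_{\Kk},\mu_p)\hookrightarrow\Br(K)[p]$, the period of $A_{\varphi_0}$ divides $p$, and the non-triviality hypothesis forces it to equal $p$. Next I would bound the index from above using the preceding lemma: since $K_s^{\ker(f_0)}/K$ is a degree $p$ splitting field of $A_{\varphi_0}$, and the index of a central simple algebra always divides the degree of any splitting field, the index of $A_{\varphi_0}$ divides $p$. Combining with the general inequality that the period divides the index then forces both index and period to equal $p$, proving the index--period statement.

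Given $\mathrm{ind}(A_{\varphi_0})=p$, the identity $\dim_K D=p^2$ is immediate. Comparing dimensions in the decomposition $A_{\varphi_0}\cong M_n(D)$ via $n^2 p^2=\dim_K A_{\varphi_0}=[L:K]^2$ gives $n=p^{-1}[L:K]$. For the embedding assertion, I would invoke the standard result (see, e.g., \cite{Reiner}, Theorem 28.5) that a field extension $F/K$ of degree equal to $\deg(D)=\sqrt{\dim_K D}$ splits $D$ if and only if $F$ embeds into $D$ as a maximal commutative $K$-subalgebra. Since $K_s^{\ker(f_0)}$ and $K_s^{\ker(g_0)}$ both have degree $p=\deg(D)$ and split $A_{\varphi_0}$ (equivalently, split the Brauer-equivalent algebra $D$), they embed as maximal subfields of $D$.

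The main obstacle is purely bookkeeping: every step is standard period/index machinery applied to the preceding lemma, so the only real care needed is to cite the correct version of the splitting-field-equals-maximal-subfield equivalence for central division algebras, and to keep track of the fact that Brauer-equivalent algebras share splitting fields.
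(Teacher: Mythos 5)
Your proof is correct and follows essentially the same route as the paper: use the preceding lemma to exhibit a degree-$p$ separable splitting field, deduce that the index divides $p$, combine with non-triviality to force the index to equal $p$, then read off $\dim_K D$, $n$, and the embeddings from standard Artin--Wedderburn bookkeeping. The only cosmetic difference is that you funnel the non-triviality hypothesis through the chain ``period divides index divides $p$'' whereas the paper uses it directly to rule out index $1$; since $p$ is prime these are the same argument.
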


\begin{proof}
The index of $A_{\varphi_0}$ is the greatest common divisor of the degrees of finite separable extensions which split $A_{\varphi_0}$. The extension $K_s^{\ker(f_0)}/K$ splits $A_{\varphi_0}$. Since $K_s^{\ker(f_0)}/K$ has degree $p$, the index of $A_{\varphi}$ is $p$. Consequently, $A_{\varphi_0}\cong M_n(D)$, where $D$ is a central division algebra of dimension $p^2$ over $K$, and $D$ has a maximal commutative subalgebra isomorphic to $K_s^{\ker(f_0)}$. Likewise, $K_s^{\ker(g_0)}$ also embeds into $D$ as a maximal commutative subalgebra. Moreover, $A_{\varphi_0}$ has $K$-dimension $[L:K]^2=n^2[D:K]=n^2p^2$. Therefore, $n=p^{-1}[L:K]$.
\end{proof}

We want to compute $D$ explicitly and relate its generators to the splitting fields $K_s^{\ker(f_0)}$ and $K_s^{\ker(g_0)}$. The proof of the Artin-Wedderburn Theorem shows that $D\cong \End_{A_{\varphi_0}}(S)^{\textrm{opp}}$ for any minimal left ideal $S$. %So it will suffice to find a minimal left ideal $S$ of $A_{\phi_0}$ and compute $\End_A(S)^{\textrm{opp}}$.
The same proof also shows that a left ideal $I$ of $A_{\varphi_0}\cong M_n(D)$ is minimal if and only if 
\[\dim_K(I)=n[D:K].\]
\begin{definition} Let $\theta= \sum_{t\in \Gal(L/K_s^{\ker(g_0)})}{e_t}$ and let $\cS$ be the left ideal of $A_{\varphi_0}$ generated by $\theta$.
\end{definition}

\begin{proposition}
\label{dim S}
We have $\cS=\{xe_s\theta \mid x\in L, s\in R\}$, where $R$ is a set of left coset representatives for $\Gal(L/K_s^{\ker(g_0)})$ in $\Gal(L/K)$. Moreover, the dimension of $\cS$ as a $K$-vector space satisfies the following equality:
$$\dim_K(\cS)=[K_s^{\ker(g_0)}:K][L:K]=p[L:K].$$ 
\end{proposition}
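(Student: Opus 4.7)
The plan is to first establish the key identity $\varphi_0(s, h) = 1$ for every $s \in \Gal(L/K)$ and every $h \in H := \Gal(L/K_s^{\ker(g_0)})$. Under the identification $\Gal(L/K) = G_{\Kk}/N$, the subgroup $H$ corresponds to $\ker(g_0)/N$, so the cocycle $g_0$ (now viewed as a function on $\Gal(L/K)$) vanishes identically on $H$. Plugging $t = h$ into the formula (\ref{phi}) then yields $\varphi_0(s, h) = (s \cdot g_0(h))(f_0(s)) = (s\cdot 0)(f_0(s)) = 1$, since the trivial element of $M^{\vee}$ sends every $m \in M$ to $1 \in \mu_p$.

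From this I would deduce that $\theta$ is ``$H$-invariant'' in the sense that $e_h \theta = \theta$ for every $h \in H$: indeed,
\[
e_h \theta = \sum_{t \in H} \varphi_0(h, t)\, e_{ht} = \sum_{t \in H} e_{ht} = \theta,
\]
since $ht$ runs over $H$ as $t$ does. Rewriting $e_{sh} = \varphi_0(s, h)^{-1} e_s e_h$ then gives $e_{sh} \theta = \varphi_0(s, h)^{-1} e_s \theta \in L \cdot e_s \theta$. An arbitrary element of $\cS$ has the form $\bigl(\sum_{s \in \Gal(L/K)} x_s e_s\bigr)\theta$ with $x_s \in L$; grouping the sum by left cosets $sH = rH$ and applying the above expresses it as a left $L$-linear combination of $\{e_r \theta : r \in R\}$, matching the claimed generating set.

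For the dimension count, I would observe that the explicit expansion $e_r \theta = \sum_{h \in H} \varphi_0(r, h)\, e_{rh}$ involves only the $L$-basis vectors $e_s$ with $s$ in the coset $rH$. Since the cosets $rH$ for distinct $r \in R$ are pairwise disjoint, the elements $\{e_r \theta\}_{r \in R}$ are supported on disjoint subsets of the $L$-basis $\{e_s\}_{s \in \Gal(L/K)}$ of $A_{\varphi_0}$; hence any vanishing left $L$-linear combination forces all coefficients to be zero. Therefore $\cS$ is a free left $L$-module of rank $|R| = [\Gal(L/K) : H] = [K_s^{\ker(g_0)} : K] = p$, and multiplying by $\dim_K L = [L:K]$ yields $\dim_K(\cS) = p\,[L:K]$.

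The main obstacle is purely conceptual: identifying $H$ with $\ker(g_0)/N$ under the Galois correspondence and inflation, which is what makes the cocycle calculation $\varphi_0(s,h)=1$ actually go through. Once that identification is in place, the remainder of the argument is a direct manipulation of the multiplication rules of $A_{\varphi_0}$, with no further subtleties.
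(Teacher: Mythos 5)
Your proposal is correct and takes essentially the same route as the paper: you establish $\varphi_0(s,t)=1$ for all $t\in\Gal(L/K_s^{\ker(g_0)})$ from the cup-product formula, deduce $e_t\theta=\theta$ on that subgroup so that coset representatives span $\cS$ over $L$, and prove independence by expanding each $e_r\theta$ in the $L$-basis $\{e_s\}_{s\in\Gal(L/K)}$ before counting $|R|=p$. Your disjoint-support phrasing of the independence step is just a repackaging of the paper's direct computation, so there is nothing genuinely different to compare.
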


\begin{proof}
%Let $\{x_i\}_{1\leq i\leq [L:K]}$ be a basis for $L$ as a $K$-vector space. Then the elements $\{x_i e_s\}_{1\leq i\leq [L:K],\ s\in\Gal(L/K)}$ form a basis for $A$ as a $K$-vector space. Thus, 
The elements $\{e_s\theta\}_{s\in\Gal(L/K)}$ span the left $L$-vector space \mbox{$\cS=A_{\varphi_0}\theta$}. For any $s\in \Gal(L/K)$, we have 
%\begin{eqnarray*}
%\begin{displaymath}
%\xymatrix{
\begin{eqnarray*}
e_s\theta = %e_s \Bigl(\sum_{t\in \Gal(L/K_s^{\ker(g_0)})}{e_t}\Bigr)=
\sum_{t\in \Gal(L/K_s^{\ker(g_0)})}{e_se_t}
= \sum_{t\in \Gal(L/K_s^{\ker(g_0)})}{\varphi_0(s,t)e_{st}} = \sum_{t\in \Gal(L/K_s^{\ker(g_0)})}{e_{st}}
\end{eqnarray*}
%}
%\end{displaymath}
%\end{eqnarray*}
where the last equality holds because $\varphi_0(s,t)=1$ for all $t\in \Gal(L/K_s^{\ker(g_0)})$, by definition of $\varphi_0$. In particular, if $s\in \Gal(L/K_s^{\ker(g_0)})$, we have $e_s\theta=\theta$. So if $R$ is a set of left coset representatives for $\Gal(L/K_s^{\ker(g_0)})$ in $\Gal(L/K)$, the elements $\{e_s\theta\}_{s\in R}$ span the left $L$-vector space $\cS$. In fact, these elements form a left $L$-basis for $\cS$. To show linear independence, suppose that 
$$\sum_{s\in R}{x_s e_s \theta}=0$$
for some coefficients $x_{s}\in L$. Then we have
\begin{eqnarray*}
0&=&\sum_{s\in R}{x_s e_s \theta} =\sum_{s\in R}{x_s \sum_{t\in \Gal(L/K_s^{\ker(g_0)})}{e_{st}}} =\sum_{r\in \Gal(L/K)}{x_r e_r}.\\
\end{eqnarray*}
But the elements $\{e_r\}_{r\in\Gal(L/K)}$ form a left $L$-basis for $A_{\varphi_0}$. Therefore, we must have $x_{s}=0$ for all $s\in R$. Hence, the elements $\{e_s\theta \}_{s\in R}$ form a left $L$-basis for $\cS$, with $|R|$ distinct elements. The cardinality of $R$ is 
\begin{eqnarray*}
\frac{|\Gal(L/K)|}{|\Gal(L/K_s^{\ker(g_0)})|}&=&[K_s^{\ker(g_0)}:K]=p,\\
%&=& p \ \ \quad\quad\textrm{by Corollary \ref{L_f^tau deg p}}.
\end{eqnarray*}
whereby the dimension of $\cS$ as a $K$-vector space is $p[L:K]$, as required.
\end{proof}

\begin{corollary}
\label{S min}
If the class of $A_{\varphi_0}$ in $\Br(K)$ is non-trivial, then $\cS$ is a minimal left ideal of $A_{\varphi_0}$.
\end{corollary}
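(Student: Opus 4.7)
The plan is very short: this corollary is essentially a dimension count that combines the numerical input supplied by Proposition \ref{dim S} with the characterization of minimal left ideals recorded just before the definition of $\cS$.

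First I would invoke the preceding corollary, which applies under the standing hypothesis that the class of $A_{\varphi_0}$ in $\Br(K)$ is non-trivial. It gives an isomorphism $A_{\varphi_0}\cong M_n(D)$ where $D$ is a central division algebra over $K$ with $[D:K]=p^2$ and $n=p^{-1}[L:K]$. As noted in the paragraph preceding the definition of $\cS$, the Artin--Wedderburn structure theorem tells us that a left ideal $I$ of $A_{\varphi_0}\cong M_n(D)$ is minimal if and only if
\[
\dim_K(I)=n[D:K]=p^{-1}[L:K]\cdot p^2=p[L:K].
\]

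Then I would simply apply Proposition \ref{dim S}, which says exactly that $\dim_K(\cS)=p[L:K]$. Combining these two equalities shows that $\cS$ meets the dimension criterion and is therefore a minimal left ideal of $A_{\varphi_0}$.

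There is no real obstacle: the substantive work has already been done in Proposition \ref{dim S} (computing $\dim_K\cS$) and in the preceding corollary (identifying the Wedderburn decomposition of $A_{\varphi_0}$). The corollary itself is a one-line numerical comparison, with the hypothesis on nontriviality of the Brauer class used only to guarantee that $A_{\varphi_0}$ is not a matrix algebra over $K$ itself, so that the value $[D:K]=p^2$ is the right thing to plug into the minimality criterion.
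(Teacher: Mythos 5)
Your proposal is correct and follows essentially the same route as the paper: both combine the dimension criterion for minimal left ideals from the Artin--Wedderburn theorem with the fact (from the preceding corollary, using non-triviality of the Brauer class) that $[D:K]=p^2$ and $n=p^{-1}[L:K]$, and then compare $n[D:K]=p[L:K]$ with $\dim_K(\cS)=p[L:K]$ from Proposition \ref{dim S}. No gaps.
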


\begin{proof}
The proof of the Artin-Wedderburn Theorem shows that a left ideal in $A_{\varphi_0}$ is minimal if and only if its dimension over $K$ is equal to $n[D:K]$, where $A_{\varphi_0}\cong M_n(D)$. If the class of $A_{\varphi_0}$ in $\Br(K)$ is non-trivial, then we have
\[[L:K]^2=\dim_K(A_{\varphi_0})=n^2[D:K]=n^2p^2.\]
Thus, a left ideal in $A_{\varphi_0}$ is minimal if and only if its dimension over $K$ is equal to $np^2=p[L:K]$. 
\end{proof}

\begin{corollary}
If $D\neq K$, then $D\cong \End_{A_{\varphi_0}}(\cS)^{\textrm{opp}}$.
\end{corollary}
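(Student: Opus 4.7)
The plan is to combine the previous corollary with the structural statement from the Artin--Wedderburn theorem that was recalled just before Definition of $\theta$.

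First I would observe that the hypothesis $D\neq K$ is exactly the condition that the class of $A_{\varphi_0}$ in $\Br(K)$ is non-trivial: if $D=K$, then $A_{\varphi_0}\cong M_n(K)$ is split, and conversely if the class is trivial then Wedderburn gives $A_{\varphi_0}\cong M_{[L:K]}(K)$, forcing the division component to be $K$. Under this non-triviality hypothesis, Corollary~\ref{S min} applies and tells us that $\cS$ is a minimal left ideal of $A_{\varphi_0}$.

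Next I would invoke the structural half of Artin--Wedderburn recorded above the definition of $\theta$: for any minimal left ideal $S$ of a central simple algebra $A_{\varphi_0}\cong M_n(D)$, one has $D\cong \End_{A_{\varphi_0}}(S)^{\textrm{opp}}$. Applying this with $S=\cS$ yields the desired isomorphism immediately.

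Since both ingredients are already in place earlier in the excerpt, there is no real obstacle here; the only thing to spell out is the equivalence between ``$D\neq K$'' and ``$A_{\varphi_0}$ has non-trivial Brauer class'', which is a one-line consequence of Wedderburn. The corollary is therefore essentially a repackaging of Corollary~\ref{S min} tailored to what is needed for the explicit computation of $D$ in the next section.
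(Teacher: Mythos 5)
Your argument matches the paper's proof exactly: both identify $D\neq K$ with non-triviality of the class of $A_{\varphi_0}$ in $\Br(K)$, apply Corollary \ref{S min} to get minimality of $\cS$, and then invoke the fact from the proof of the Artin--Wedderburn theorem that $D\cong \End_{A_{\varphi_0}}(S)^{\textrm{opp}}$ for any minimal left ideal $S$. The proposal is correct and needs no changes.
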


\begin{proof}
By definition, the class of $A_{\varphi_0}$ in $\Br(K)$ is trivial if and only if $D=K$.
The proof of the Artin-Wedderburn Theorem shows that $D\cong \End_{A_{\varphi_0}}(S)^{\textrm{opp}}$ for any minimal left ideal $S$ of $A_{\varphi_0}$.
Thus, the result follows from Corollary \ref{S min}.
\end{proof}

\begin{remark}
If the class of $A_{\varphi_0}$ in $\Br(K)$ is trivial, then $\cS$ is no longer a minimal left ideal of $A_{\varphi_0}$. But $\End_{A_{\varphi_0}}(\cS)^{\textrm{opp}}$ is still a central simple algebra over $K$ of dimension $p^2$ with the same class in $\Br(K)$ as $A_{\varphi_0}$. We will prove that $\cD=\End_{A_{\varphi_0}}(\cS)^{\textrm{opp}}$ is as described in Theorem \ref{mainthm}.%To see this, let $T$ be the unique nonzero simple left module of $A_{\varphi_0}$. \cS\cong T^m$ for some $m\in \bbN$. Now 

%should i prove this? should i prove dim is p^2 always?
\end{remark}

\section{Computing the endomorphism ring}

Let $R$ be a set of left coset representatives for $\Gal(L/K_s^{\ker(g_0)})$ in $\Gal(L/K)$ and let $B=\{xe_s \mid x\in L, s\in R\}$. Proposition \ref{dim S} tells us that $\cS=A_{\varphi_0}\theta=B\theta$. We would like $B$ to be a subalgebra of $A_{\varphi_0}$, so we want to choose $R$ so that it is a subgroup of $\Gal(L/K)$.

\begin{lemma}
\label{R gen by rho}
Let $\rho\in H_{\Mm^{\vee}}/N$%\leq G_{\Kk}/N$%=\Gal(L/K)$ 
 be such that its image generates $H_{\Mm^{\vee}}/N_g$. Then $R=\{\rho^i\}_{0\leq i\leq p-1}$ is a set of left coset representatives for $\Gal(L/K_s^{\ker(g_0)})$ in $\Gal(L/K)$.
\end{lemma}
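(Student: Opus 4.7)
The plan is to show that the $p$ elements $\rho^0,\rho^1,\ldots,\rho^{p-1}$ lie in $p$ pairwise distinct left cosets of $\Gal(L/K_s^{\ker(g_0)})$ in $\Gal(L/K)$, then invoke a counting argument. For the count, the extension $K_s^{\ker(g_0)}/K$ is a subextension of $L/K$ of degree $p$ (established earlier in the excerpt, by the same reasoning used for $f$), so $[\Gal(L/K):\Gal(L/K_s^{\ker(g_0)})]=p$. Hence it suffices to establish pairwise distinctness.

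Next, I would identify $\Gal(L/K_s^{\ker(g_0)})$ with $\ker(g_0)/N$ inside $\Gal(L/K)=G_{\Kk}/N$; this is legitimate because $N\subseteq N_g\subseteq \ker(g_0)$. Fix a lift of $\rho$ to an element of $H_{\Mm^{\vee}}\subseteq G_{\Kk}$, which I will still denote by $\rho$. Since every power of $\rho$ lies in $H_{\Mm^\vee}$, the $G_{\Kk}$-action on $M^{\vee}$ restricted to $\langle\rho\rangle$ is trivial. The cocycle identity $g_0(st)=g_0(s)+s\cdot g_0(t)$ therefore specialises on $\langle\rho\rangle$ to the additive relation $g_0(\rho^k)=k\cdot g_0(\rho)$ for every integer $k$.

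By hypothesis the image of $\rho$ generates the cyclic group $H_{\Mm^\vee}/N_g$ of order $p$. Since $g_0|_{H_{\Mm^\vee}}$ induces, by the very definition of $N_g$, an injection $H_{\Mm^\vee}/N_g\hookrightarrow M^{\vee}$, the element $g_0(\rho)\in M^{\vee}$ has order $p$. Combining this with the formula from the previous paragraph, $g_0(\rho^k)=k\cdot g_0(\rho)$ vanishes precisely when $p\mid k$. Now if $\rho^i$ and $\rho^j$ with $0\le i,j\le p-1$ were to lie in the same coset, then $\rho^{j-i}\in\Gal(L/K_s^{\ker(g_0)})=\ker(g_0)/N$, so the chosen lift $\rho^{j-i}\in H_{\Mm^\vee}$ would satisfy $g_0(\rho^{j-i})=0$, forcing $p\mid(j-i)$ and hence $i=j$. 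This gives $p$ distinct cosets, matching the index, which completes the argument.

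The argument is essentially bookkeeping; the only point that needs care is the interplay between lifts in $G_{\Kk}$ and elements of the quotient $G_{\Kk}/N$, and the observation that restricting the $1$-cocycle $g_0$ to the subgroup $H_{\Mm^\vee}$ (where the action is trivial) turns it into an honest homomorphism, so that $g_0(\rho^k)=k\cdot g_0(\rho)$ holds on the nose.
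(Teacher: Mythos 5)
Your proof is correct and essentially the same as the paper's: both reduce to showing $\rho^r$ lies in $\Gal(L/K_s^{\ker(g_0)})=\ker(g_0)/N$ if and only if $p\mid r$, and then conclude by the count $[K_s^{\ker(g_0)}:K]=p$. Your explicit use of the homomorphism $g_0|_{H_{\Mm^\vee}}$ with kernel $N_g$ is just a spelled-out version of the paper's identity $N_g/N=H_{\Mm^\vee}/N\cap\ker(g_0)/N$, so the two arguments coincide in substance.
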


\begin{proof}
We have $\#R=[K_s^{\ker(g_0)}:K]=p$. Thus, it is enough to show that $\rho^r\in \Gal(L/K_s^{\ker(g_0)})=\ker(g_0)/N$ if and only if $p$ divides $r$.
But $\rho\in H_{\Mm^{\vee}}/N$ and $N_g/N=H_{\Mm^{\vee}}/N\cap \ker(g_0)/N$. Hence, $\rho^r\in \ker(g_0)/N$ if and only if $\rho^r\in N_g/N$. But the image of $\rho$ generates $H_{\Mm^{\vee}}/N_g$ and $[H_{\Mm^{\vee}}:N_g]=p$, so $\rho^r\in N_g/N$ if and only if $p$ divides $r$.
\end{proof}

From now on, we fix $R=\{\rho^i\}_{0\leq i\leq p-1}$, so $B$ is a subalgebra of $A_{\varphi_0}$. We want to compute $\End_{A_{\varphi_0}}(\cS)^{\textrm{opp}}$. We know that $\cS$ is a principal left ideal generated by $\theta$, so any $\chi\in\End_{A_{\varphi_0}}(\cS)$ is completely determined by $\chi(\theta)$. Since $\chi(\theta)\in \cS=B\theta$, we have $\chi(\theta)=b\theta$ for some $b\in B$. The question is, which $b$ can occur? In other words, for which $b\in B$ does $\chi:\theta\mapsto b\theta$
extend to a well-defined element of $\End_{A_{\varphi_0}}(\cS)$? The extension of $\chi$ to the whole of $\cS$ is given by 
$$\chi(c\theta)=c\chi(\theta)\ \quad\forall\ c\in B.$$
This is well-defined because any element of $\cS$ can be written as $c\theta$ for a \emph{unique} $c\in B$. But it may not be an $A_{\varphi_0}$-endomorphism.
%$\chi\in \End_A(S)$ implies that for all $a\in A$, $\chi(a())=a\chi(())$. 
We see that $\chi$ gives a well-defined element of $\End_{A_{\varphi_0}}(\cS)$ if and only if
\begin{equation*}
\chi(a\theta)=a\chi(\theta)=ab\theta\ \quad  \quad\forall \ a\in A_{\varphi_0}.
\end{equation*}

The point is, when we allow multiplication by the whole of $A_{\varphi_0}$ (rather than just the subalgebra $B$), it is possible to have $a_1\theta=a_2\theta$
with $a_1,a_2\in A_{\varphi_0}$ and $a_1\neq a_2$. For $\chi$ to give a well-defined element of $\End_{A_{\varphi_0}}(\cS)$, we would also need $a_1b\theta=a_2b\theta$
in this case. Equivalently, $\chi$ extends to a well-defined element of $\End_{A_{\varphi_0}}(\cS)$ if and only if $$ab\theta=0 \ \textrm{for all}\ a\in A_{\varphi_0} \ \textrm{such that}\ a\theta=0.$$ Clearly, it suffices for $b$ to commute with $\theta=\sum_{t\in \Gal(L/K_s^{\ker(g_0)})}{e_t}$. Hence, it suffices for $b$ to commute with $e_t$ for every $t\in\Gal(L/K_s^{\ker(g_0)})$.
%Suppose that $\chi_1, \chi_2\in\End_{A_{\varphi_0}}(\cS)$ are such that $\chi_1(\theta)=b_1\theta$ and $\chi_2(\theta)=b_2\theta$ for some $b_1,b_2\in B$. Then,
%\begin{equation*}
%\chi_1\chi_2(\theta)= \chi_1(b_2\theta)=b_2\chi_1(\theta)=b_2b_1\theta.
%\end{equation*}
The multiplication on $\End_{A_{\varphi_0}}(\cS)$ is the opposite of the multiplication on $B$ inherited from $A_{\varphi_0}$. Therefore, we can view $\End_{A_{\varphi_0}}(\cS)^{\textrm{opp}}$ as a subalgebra of $B$. We will make this identification from now on. Thus, we have
\begin{equation}
\label{eq:inclusion}
B\supset\End_{A_{\varphi_0}}(\cS)^{\textrm{opp}}\supset \{b\in B\bigm| e_t b=b e_t\ \quad\forall t\in\Gal(L/K_s^{\ker(g_0)})\}.
\end{equation}

\begin{remark}
In fact, a careful analysis of the left-annihilator of $\theta$ may be used to show that the rightmost inclusion is an equality. We omit the details of this rather involved calculation and instead demonstrate the equality simply by finding enough elements in the right-hand side and comparing dimensions.
\end{remark}
The rightmost inclusion in \eqref{eq:inclusion} leads us to ask the following question. Which elements of $B$ commute with $e_t$ for all $t\in\Gal(L/K_s^{\ker(g_0)})$? Recall that $$B=\{xe_{\rho^i}\mid x\in L,\ 0\leq i\leq p-1\},$$ where $\rho\in H_{\Mm^{\vee}}/N\leq G_{\Kk}/N=\Gal(L/K)$ is such that its image generates $H_{\Mm^{\vee}}/N_g$. Therefore, there is an obvious subalgebra of $B$ whose elements commute with $e_t$ for every $t\in\Gal(L/K_s^{\ker(g_0)})$; namely the field $K_s^{\ker(g_0)}$. This is by definition of the multiplication in $A_{\varphi_0}$; recall that
$$e_s x=s(x)e_s\ \quad\forall s\in \Gal(L/K), \ \quad\forall x\in L.$$
Thus, $x\in L$ commutes with $e_s$ if and only if $s(x)=x$. %Since $\dim_K(\End_{A_{\varphi_0}}(\cS)^{\textrm{opp}})=p^2=[K_s^{\ker(g_0)}:K]^2$, the field $K_s^{\ker(g_0)}$ is a maximal commutative subalgebra of $\End_{A_{\varphi_0}}(\cS)^{\textrm{opp}}$. 

%\begin{dim End_A(S)}
%\label{dim End_A(S)} The dimension of $\End_A(\cS)^{\textrm{opp}}$ as a $K$-vector space satisfies the following equality:
%$$\dim_K(\End_A(\cS)^{\textrm{opp}})=[L_g^{\langle\nu\rangle}:K]^2=p^2.$$
%\end{dim End_A(S)}

%\begin{proof}
%As a $K$-vector space, $\End_A(\cS)^{\textrm{opp}}$ is isomorphic to $\End_A(\cS)$. So it suffices to compute the dimension of $\End_A(\cS)$. Use Lemma \ref{unique simple} to write $\cS\cong T^r$ for some simple left $A$-module $T$ and some $r\in\bbN$. By Corollary \ref{D=End_A(S)^opp}, we have $\End_A(T)\cong D^{\textrm{opp}}$, where $A\cong M_n(D)$. Hence, 
%\begin{equation*}
%\End_A(\cS)\cong \End_A(T^r)\cong  M_r(D^{\textrm{opp}}).
%\end{equation*}
%Lemma \ref{min iff deg} shows that $\dim_K(T)=[L:K]\sqrt{[D:K]}$ and Proposition \ref{dim S} shows that $\dim_K(\cS)=[L:K][L_g^{\langle\nu\rangle}:K]$. Hence, 
%$$r=\frac{\dim_K(\cS)}{\dim_K(T)}=\frac{[L:K][L_g^{\langle\nu\rangle}:K]}{[L:K]\sqrt{[D:K]}}=\frac{[L_g^{\langle\nu\rangle}:K]}{\sqrt{[D:K]}}.$$
%Therefore,
%\begin{equation*}
%\dim_K(\End_A(\cS))=\dim_K(M_r(D^{\textrm{opp}}))=r^2[D:K]=[L_g^{\langle\nu\rangle}:K]^2.
%\end{equation*}
%Lemma \ref{deg p} applied to the module $M^{\vee}$ tells us that $[L_g^{\langle\nu\rangle}:K]=p$, which completes the proof.
%\end{proof}

\begin{lemma}
\label{generators for D}
$\End_{A_{\varphi_0}}(\cS)^{\textrm{opp}}$ is generated as a $K$-algebra by the elements of $K_s^{\ker(g_0)}$ and any element $d\in \End_{A_{\varphi_0}}(\cS)^{\textrm{opp}}\setminus K_s^{\ker(g_0)}$.
\end{lemma}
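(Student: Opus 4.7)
The plan is to reduce the lemma to the assertion that the only $K$-subalgebras of $\cD := \End_{A_{\varphi_0}}(\cS)^{\textrm{opp}}$ containing $F := K_s^{\ker(g_0)}$ are $F$ itself and $\cD$. Granting this, the subalgebra $E := \langle F, d\rangle$ strictly contains $F$ (since $d \notin F$) and must therefore equal $\cD$. Throughout I will use that $\cD$ is a central simple $K$-algebra of dimension $p^2$, as noted in the preceding remark, and that $F$ is a subfield of $\cD$ with $[F:K] = p$.

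First I would establish that $F$ is maximal commutative in $\cD$. By the double centralizer theorem applied to the simple subalgebra $F \subseteq \cD$, one has $[F:K]\,[C_\cD(F):K] = [\cD:K] = p^2$, giving $[C_\cD(F):K] = p$. Combined with $F \subseteq C_\cD(F)$, this forces $C_\cD(F) = F$. Consequently, any commutative $K$-subalgebra of $\cD$ containing $F$ lies in $C_\cD(F) = F$ and thus equals $F$. It therefore suffices to show that every \emph{noncommutative} $K$-subalgebra $E \supseteq F$ satisfies $E = \cD$.

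I would then split on whether $\cD$ is a division algebra. In the division algebra case (that is, when $f \cup g \neq 0$), $E$ is itself a division ring as a subring of a division ring, so $\cD$ is a free left $E$-module; hence $\dim_K E$ divides $p^2$. Since $F \subsetneq E$ forces $\dim_K E > p$, the only option is $\dim_K E = p^2$, i.e., $E = \cD$. In the split case $\cD \cong M_p(K)$, let $V = K^p$ be the simple $\cD$-module. Since $\dim_K V = p = [F:K]$ and $F$ acts faithfully, $V$ is one-dimensional as a left $F$-module; in particular $V$ is $F$-simple, and therefore also simple as an $E$-module (as every $E$-submodule is an $F$-submodule). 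Jacobson density then yields $E = \End_{D_E}(V)$ with $D_E := \End_E(V)$. Because $D_E$ commutes with $E \supseteq F$, we have $D_E \subseteq \End_F(V) \cong F$ (using $\dim_F V = 1$), so $K \subseteq D_E \subseteq F$ is a subfield. Since $[F:K] = p$ is prime, $D_E = K$ or $D_E = F$. The option $D_E = F$ would give $E = \End_F(V) = F$, contradicting $E \supsetneq F$; hence $D_E = K$ and $E = \End_K(V) = \cD$.

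The main obstacle I anticipate is the careful handling of the split case: one must verify that $V$ remains simple as an $E$-module (immediate from $F$-simplicity and $F \subseteq E$) and cleanly identify $\End_F(V)$ with $F$ via the one-dimensionality of $V$ over $F$. By contrast, the division algebra case is almost immediate once one invokes freeness of modules over a division ring.
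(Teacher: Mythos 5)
Your proof is correct and takes essentially the same route as the paper: the same case split into the division-algebra case (settled by a dimension count of the subalgebra inside $\End_{A_{\varphi_0}}(\cS)^{\textrm{opp}}$) and the split case $\cD\cong M_p(K)$, where the decisive point in both arguments is that the simple module $V$ satisfies $\dim_{K_s^{\ker(g_0)}}V=1$, hence remains simple over the larger subalgebra. The only cosmetic differences are your (unneeded) preliminary double-centralizer observation and your appeal to Jacobson density, where the paper instead deduces semisimplicity from the faithful simple module and pins the subalgebra down via Artin--Wedderburn.
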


\begin{proof}
%Proposition \ref{dim End_A(S)} states that $\dim_K(\End_A(\cS)^{\textrm{opp}})=[L_g^{\langle\nu\rangle}:K]^2$. By our earlier assumption, $[L_g^{\langle\nu\rangle}:K]=p$.
We know that the algebra $\End_{A_{\varphi_0}}(\cS)^{\textrm{opp}}$ has dimension $p^2$ over $K$. Let $T=\langle K_s^{\ker(g_0)}, d\rangle$ be the subalgebra of $\End_{A_{\varphi_0}}(\cS)^{\textrm{opp}}$ generated over $K$ by $K_s^{\ker(g_0)}$ and $d$, where $d\in \End_{A_{\varphi_0}}(\cS)^{\textrm{opp}}\setminus K_s^{\ker(g_0)}$. Then, 
$$K\subset K_s^{\ker(g_0)}\subsetneq T\subset \End_{A_{\varphi_0}}(\cS)^{\textrm{opp}}.$$
First, suppose that $\End_{A_{\varphi_0}}(\cS)^{\textrm{opp}}$ is a division ring. Then $T$ is also a division ring and we can view $\End_{A_{\varphi_0}}(\cS)^{\textrm{opp}}$ as a left $T$-vector space. We have
\[p^2=\dim_K\End_{A_{\varphi_0}}(\cS)^{\textrm{opp}}=(\dim_T\End_{A_{\varphi_0}}(\cS)^{\textrm{opp}})(\dim_KT).\]
But $\dim_KT>[K_s^{\ker(g_0)}:K]=p$, whereby $\dim_KT=p^2$ and therefore $\End_{A_{\varphi_0}}(\cS)^{\textrm{opp}}=T.$ 

Now suppose that $\End_{A_{\varphi_0}}(\cS)^{\textrm{opp}}$ is not a division ring. Since $\End_{A_{\varphi_0}}(\cS)^{\textrm{opp}}$ is a central simple algebra of dimension $p^2$ over $K$, the Artin-Wedderburn Theorem tells us that $\End_{A_{\varphi_0}}(\cS)^{\textrm{opp}} \cong M_p(K)$. In other words, $\End_{A_{\varphi_0}}(\cS)^{\textrm{opp}}$ is isomorphic to $\End_K(V)$, where $V$ is a $K$-vector space of dimension $p$. Note that $V$ is a faithful $T$-module. Moreover, 
$$\dim_{K_s^{\ker(g_0)}}{V}=\frac{\dim_K{V}}{[K_s^{\ker(g_0)}:K]}=1.$$
Therefore, $V$ is a simple $K_s^{\ker(g_0)}$-module, and hence a simple $T$-module.
So $T$ has a non-zero faithful simple module, whereby the Jacobson radical of $T$ is zero. Therefore, $T$ is a semisimple $K$-algebra, since $T$ is finite-dimensional over $K$. Now the Artin-Wedderburn Theorem tells us that $T\cong M_m(E)$ for some division ring $E$ over $K$ and some $m\in\bbN$. Furthermore, any nonzero simple module for $M_m(E)$ is isomorphic to the left ideal $I$ of $M_m(E)$ consisting of matrices with all entries zero except in the first column. In particular,
$$p=\dim_K{V}=\dim_K{I}=m[E:K].$$
If $m=1$ and $[E:K]=p$ then $T\cong E$ and we get a contradiction because $K_s^{\ker(g_0)}$ is a proper subalgebra of $T$ of dimension $p$ over $K$. Therefore, we must have $m=p$ and $E=K$, whereby $T\cong M_p(K)$. So $T=\End_{A_{\varphi_0}}(\cS)^{\textrm{opp}}$, as required.
\end{proof}

%\begin{lemma}
%\label{degK(d)=p}
%Let $d\in \End_{A_{\varphi_0}}(\cS)^{\textrm{opp}}\setminus K$. Then $[K(d):K]=p.$
%\end{lemma}

%\begin{proof}
%$K(d)$ is a non-trivial extension of $K$ which is contained in $\End_{A_{\varphi_0}}(\cS)^{\textrm{opp}}$, therefore its degree divides \mbox{$[\End_{A_{\varphi_0}}(\cS)^{\textrm{opp}}:K]=p^2$.} %By Proposition \ref{always End_A(S)}, 
%$\End_{A_{\varphi_0}}(\cS)^{\textrm{opp}}$ is a central simple algebra over $K$. Therefore, it is not commutative (since it is not equal to $K$). Hence, $K(d)\neq\End_{A_{\varphi_0}}(\cS)^{\textrm{opp}}$. Thus, $[K(d):K]=p$.
%\end{proof}

\begin{proposition}
\label{L_f^tau in D}
$\End_{A_{\varphi_0}}(\cS)^{\textrm{opp}}$ contains a maximal commutative subalgebra isomorphic to $K_s^{\ker(f_0)}$.
\end{proposition}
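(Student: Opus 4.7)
The plan is to produce an explicit element $z\in\End_{A_{\varphi_0}}(\cS)^{\textrm{opp}}$ whose minimal polynomial over $K$ equals the minimal polynomial of $\alpha$. Once such a $z$ is produced, $K(z)$ is a subfield of $\End_{A_{\varphi_0}}(\cS)^{\textrm{opp}}$ isomorphic to $K(\alpha)=K_s^{\ker(f_0)}$, of $K$-dimension $p=\sqrt{\dim_K\End_{A_{\varphi_0}}(\cS)^{\textrm{opp}}}$, and so is automatically maximal commutative. (The existence of such an embedding can also be argued abstractly, since $K_s^{\ker(f_0)}$ splits $A_{\varphi_0}$ and hence $\End_{A_{\varphi_0}}(\cS)^{\textrm{opp}}$---they have the same Brauer class---and a separable degree-$p$ splitting field of a degree-$p$ central simple algebra always embeds as a maximal commutative subalgebra. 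However, the explicit element $z$ will be needed to set up Theorem \ref{mainthm}.)

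To construct $z$, I would write a general $b\in B$ as $b=\sum_{j=0}^{p-1}x_j e_{\rho^j}$ with $x_j\in L$ and impose the sufficient condition from \eqref{eq:inclusion} that $e_\tau b=be_\tau$ for every $\tau\in\Gal(L/K_s^{\ker(g_0)})$. Expanding with (\ref{phi}), the identity $g_0(\tau)=0$ for $\tau\in\ker(g_0)/N$ makes $\varphi_0(\rho^j,\tau)=1$, and the normality of $H_{\Mm^\vee}/N$ in $\Gal(L/K)$ lets $\tau$ pass through $\rho^j$, so the constraint collapses to $\tau(x_j)=\varphi_0(\tau,\rho^j)^{-1}x_j$. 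Specialising $\tau=\sigma$, the cocycle relation $g_0(\rho^j)=g_0(\rho)^j$ in $M^\vee$ (which holds because $\rho\in H_{\Mm^\vee}/N$ acts trivially on $M^\vee$), combined with the $\sigma$-invariance of $\mu_p$ coming from $\sigma|_{K(\mu_p)}=\mathrm{id}$, reduces (\ref{phi}) to $\varphi_0(\sigma,\rho^j)=\zeta^j$, whence the condition becomes the Kummer-type eigenvalue equation $\sigma(x_j)=\zeta^{-j}x_j$. The Lagrange resolvent $h_{1j}=\sum_{\ell=0}^{p-1}\zeta^{j\ell}\sigma^\ell(\alpha)$ is manifestly a solution, and the remaining generators of $\Gal(L/K_s^{\ker(g_0)})$---those acting trivially on $K(\alpha,\mu_p)$---fix every $h_{1j}$ and make the corresponding $\varphi_0(\tau,\rho^j)=1$. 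Hence $z:=\sum_{j=0}^{p-1}h_{1j}e_{\rho^j}$ lies in $\End_{A_{\varphi_0}}(\cS)^{\textrm{opp}}$.

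The main obstacle is verifying that $z$ satisfies $\mathrm{minpoly}_K(\alpha)$. My plan is to expand $z^k$ for $k=1,\ldots,p$ inside $A_{\varphi_0}$, push each $e_{\rho^j}$ to the right via $e_{\rho^j}y=\rho^j(y)e_{\rho^j}$, and track the accumulated cocycle factors $\varphi_0(\rho^i,\rho^j)$. Because each $h_{1j}$ lies in the $\zeta^{-j}$-eigenspace of $\sigma$, only index tuples $(j_1,\ldots,j_k)$ with $\sum j_\ell\equiv 0\pmod p$ can contribute $\sigma$-invariantly to the $e_1$-component; a careful orbit accounting should then identify this component with the $k$-th elementary symmetric polynomial in the $\sigma$-conjugates $\{\sigma^\ell(\alpha)\}_{\ell=0}^{p-1}$, which is (up to sign) the corresponding coefficient of $\mathrm{minpoly}_K(\alpha)$. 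Matching these contributions across $k=1,\ldots,p$---modulo the non-$e_1$ components, which will be expressible in lower powers of $z$ by the same eigenspace argument---exhibits $z$ as a root of $\mathrm{minpoly}_K(\alpha)$ and completes the proof.
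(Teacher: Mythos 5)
Your parenthetical remark is, on its own, a complete and legitimate proof of the Proposition as stated: the earlier splitting lemma shows $K_s^{\ker(f_0)}$ splits $A_{\varphi_0}$, hence splits $\End_{A_{\varphi_0}}(\cS)^{\textrm{opp}}$ (same Brauer class, dimension $p^2$), and a degree-$p$ field splitting a degree-$p$ central simple algebra embeds as a maximal subfield. The paper takes yet another, shorter route: it sets $\cT=A_{\varphi_0}\vartheta$ with $\vartheta=\sum_{t\in\Gal(L/K_s^{\ker(f_0)})}e_t$, notes by the argument of Proposition \ref{dim S} that $\dim_K\cT=p[L:K]=\dim_K\cS$, so $\cT\cong\cS$ as modules over the simple algebra $A_{\varphi_0}$, and then re-runs the argument that placed $K_s^{\ker(g_0)}$ inside $\End_{A_{\varphi_0}}(\cS)^{\textrm{opp}}$ with the roles of $f$ and $g$ swapped; the explicit generator is deferred to Sections \ref{generators}--\ref{multiplication}.

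The main body of your proposal, however, has a genuine gap at the commutation check. Conjugating $b=\sum_j x_je_{\rho^j}$ by $e_\tau$ does not give the componentwise condition $\tau(x_j)=\varphi_0(\tau,\rho^j)^{-1}x_j$: writing $\tau\rho\tau^{-1}=\rho^{\ell}$, conjugation sends $e_{\rho^j}$ to (a cocycle multiple of) $e_{\rho^{j\ell}}$, so the correct condition is the twisted one $\tau(x_j)=\varphi_0(\tau,\rho^j)^{-1}x_{j\ell}$. For $\tau=\sigma$, which does commute with $\rho$, your eigenvalue equation is right; but your description of the remaining generators of $\Gal(L/K_s^{\ker(g_0)})$ as ``acting trivially on $K(\alpha,\mu_p)$'' and fixing every $h_{1j}$ is false in general: an element $t\in(\ker(f_0)\cap\ker(g_0))/N$ fixes $\alpha$ but typically acts nontrivially on $\mu_p$ and conjugates $\sigma$ and $\rho$ nontrivially, so it sends $h_{1j}$ to $h_{1,\ell j}$ --- which is exactly what the twisted condition requires, and is what the paper proves in Proposition \ref{a_i} (using the decomposition of Lemma \ref{semidirect}), but is not what your argument establishes; as written it only covers situations like the abelian example at the end of the paper. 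Separately, the minimal-polynomial verification is only sketched, and it is the genuinely delicate part: the paper spends Sections \ref{min poly}--\ref{multiplication} on it, via the auxiliary polynomial $P(X)=\prod_k(X-\cB^kd\cB^{-k})$, the centrality of its coefficients, and the specialisation $R(X,e_\rho)$ versus $R(X,1)$. (A more direct route exists via the product formula of Lemma \ref{z^j}, since resolvents are linear in the argument, but ``orbit accounting'' of the $e_1$-component alone does not obviously control the other components.) Note also a small slip: your $z=\sum_j h_{1j}e_{\rho^j}$ is the paper's $d$ and satisfies the minimal polynomial of $p\alpha$, not of $\alpha$; harmless for $K(z)\cong K(\alpha)$ since $\Char(K)\neq p$, but it should be said.
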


\begin{proof}
Let $\cT=A_{\varphi_0}\vartheta$ with $\vartheta=\sum_{t\in \Gal(L/K_s^{\ker(f_0)})}{e_t}.$ A similar argument to that of Proposition \ref{dim S} shows that $\dim_K(\cT)=p[L:K]=\dim_K(\cS).$ $A_{\varphi_0}$ is a central simple algebra, so any two $A_{\varphi_0}$-modules with the same finite dimension are isomorphic. Hence, $\cT$ is isomorphic to $\cS$ as an $A_{\varphi_0}$-module. %Consequently, $\End_{A_{\varphi_0}}(\cS)^{\textrm{opp}}\cong\End_{A_{\varphi_0}}(\cT)^{\textrm{opp}}$. 
%We claim that an equally valid choice for $\cS$ would have been 
%$$\cT=A_{\varphi_0}\vartheta$$
%with $\vartheta=\sum_{s\in \Gal(L/K_s^{\ker(f_0)})}{e_s}.$ Proposition \ref{dim S} states that $$\dim_K(\cS)=p[L:K].$$ A similar argument shows that $\dim_K(\cT)=p[L:K].$ The key point is to show that the elements $\{e_s\vartheta\}_{s\in \cR}$ are linearly independent over $L$, where $\cR$ denotes a set of coset representatives for $\Gal(L/K_s^{\ker(f_0)})$ in $\Gal(L/K)$. To show linear independence, suppose that 
%$$\sum_{s\in \cR}{x_s e_s \vartheta}=0$$
%for some coefficients $x_{s}\in L$. Then we have
%\begin{eqnarray*}
%0&=&\sum_{s\in \cR}{x_s e_s \vartheta} =\sum_{s\in \cR}{x_s \sum_{t\in \Gal(L/K_s^{\ker(f_0)})}{\varphi_0(s,t)e_{st}}} =\sum_{r\in \Gal(L/K)}{x_r e_r},\\
%\end{eqnarray*}
%where $x_r=\varphi_0(s,t)x_s$ for $r=st$ with $s\in \cR$ and $t\in \Gal(L/K_s^{\ker(f_0)})$.
%But the elements $\{e_r\}_{r\in\Gal(L/K)}$ form a left $L$-basis for $A_{\varphi_0}$. Therefore, we must have $x_{s}=0$ for all $s\in R$.%Hence, $\cS$ and $\cT$ have the same finite dimension over $K$ and are therefore isomorphic, because they are modules for a central simple algebra. Consequently, $\End_{A_{\varphi_0}}(\cS)^{\textrm{opp}}\cong\End_{A_{\varphi_0}}(\cT)^{\textrm{opp}}$.
In analogy with Lemma \ref{R gen by rho}, choose $\cR=\{\sigma^i\}_{0\leq i\leq p-1}$ where $\sigma\in H_{\Mm}/N$ is such that its image generates $H_{\Mm}/N_f$. Replacing $\cS$ by $\cT$ and following the arguments leading up to Lemma \ref{generators for D}, we find $K_s^{\ker(f_0)}$ as a subalgebra of $\End_{A_{\varphi_0}}(\cT)^{\textrm{opp}}\cong \End_{A_{\varphi_0}}(\cS)^{\textrm{opp}}$.
\end{proof}

Previously, we found $K_s^{\ker(g_0)}$ as a maximal commutative subalgebra of $\End_{A_{\varphi_0}}(\cS)^{\textrm{opp}}$, because the elements of $K_s^{\ker(g_0)}$ commute with $e_t$ for all $t\in \Gal(L/K_s^{\ker(g_0)})$. In view of Proposition \ref{L_f^tau in D} above, we see that $\End_{A_{\varphi_0}}(\cS)^{\textrm{opp}}$ contains a maximal commutative subalgebra isomorphic to $K_s^{\ker(f_0)}$. Therefore, if these two subalgebras are distinct, then together they generate the whole of $\End_{A_{\varphi_0}}(\cS)^{\textrm{opp}}$. In this case, in Lemma \ref{generators for D} we could choose $d\in \End_{A_{\varphi_0}}(\cS)^{\textrm{opp}}\setminus K_s^{\ker(g_0)}$ such that $K(d)\cong K_s^{\ker(f_0)}$.

\section{Finding generators}
\label{generators}
%\subsection{Preliminaries}
In light of \eqref{eq:inclusion} and Lemma \ref{generators for D}, we seek an element $d\in B\setminus K_s^{\ker(g_0)}$ such that $d$ commutes with $e_t$ for all $t\in\Gal(L/K_s^{\ker(g_0)})$. Recall that
$$B=\{xe_{\rho^i}\mid x\in L,\ 0\leq i\leq p-1\}\subset A_{\varphi_0},$$ where $\rho\in H_{\Mm^{\vee}}/N\leq G_{\Kk}/N=\Gal(L/K)$ is such that its image generates $H_{\Mm^{\vee}}/N_g$. Thus, we can write $d$ in the form
\begin{equation}
\label{eq:d}
d=\sum_{i=0}^{p-1}{a_i e_{\rho^i}} \ \textrm{with}\ a_i\in L.
\end{equation}
We want to find suitable coefficients $a_i$. We will determine the precise conditions on the $a_i$ which must be satisfied if $d$ is to commute with $e_t$ for all $t\in\Gal(L/K_s^{\ker(g_0)})$. 

\begin{lemma}
\label{quotients}
We have $N_fN_g/N=H_{\Mm}H_{\Mm^{\vee}}/N$ and therefore
$$\frac{H_{\Mm}\cap N_g}{N}\cong \frac{H_{\Mm}}{N_f}$$
and $$\frac{H_{\Mm^{\vee}}\cap N_f}{N}\cong \frac{H_{\Mm^{\vee}}}{N_g}.$$
\end{lemma}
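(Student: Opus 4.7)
The plan is to prove the main equality $N_fN_g/N = H_{\Mm}H_{\Mm^{\vee}}/N$ first, whence the two isomorphism statements will fall out by order-counting with the second isomorphism theorem. My first step is to establish the two auxiliary identities
$$(H_{\Mm}\cap H_{\Mm^{\vee}})N_g = H_{\Mm^{\vee}} \quad\text{and}\quad (H_{\Mm}\cap H_{\Mm^{\vee}})N_f = H_{\Mm}.$$
Since $G_{\Kk}/H_{\Mm}\hookrightarrow \Aut(M)$, the index $[H_{\Mm^{\vee}}:H_{\Mm^{\vee}}\cap H_{\Mm}]$ divides $p-1$. If $H_{\Mm^{\vee}}\cap H_{\Mm}$ were contained in $N_g$, then $p = [H_{\Mm^{\vee}}:N_g]$ would divide this coprime-to-$p$ index, a contradiction; so $(H_{\Mm}\cap H_{\Mm^{\vee}})N_g$ strictly contains $N_g$ in $H_{\Mm^{\vee}}$, and since $H_{\Mm^{\vee}}/N_g$ has prime order $p$, equality with $H_{\Mm^{\vee}}$ follows. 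The other identity is symmetric. Multiplying them gives $H_{\Mm}H_{\Mm^{\vee}} = (H_{\Mm}\cap H_{\Mm^{\vee}})\cdot N_fN_g$, so the main equality reduces to showing $A := H_{\Mm}\cap H_{\Mm^{\vee}} \subseteq N_fN_g$.

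Next, set $B = H_{\Mm}\cap N_g$ and $C = H_{\Mm^{\vee}}\cap N_f$. By the second isomorphism theorem combined with the identities above, $[A:B] = [H_{\Mm}:N_f] = p$ and $[A:C] = [H_{\Mm^{\vee}}:N_g] = p$. Both $B$ and $C$ are normal in $G_{\Kk}$, so $BC$ is a subgroup of $A$; in the case $B \neq C$, since $|B| = |C|$ there exists $c \in C\setminus B$, whence $BC \supsetneq B$, and maximality forces $BC = A$. Since $B\subseteq N_g$ and $C\subseteq N_f$, we get $A = BC \subseteq N_fN_g$, which completes the proof of the main equality.

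The main obstacle is excluding the degenerate case $B = C$. If it holds, then $B = C \subseteq N_f\cap N_g = N$, and since $N\subseteq B$ trivially we have $B = N$, so $[A:N] = p$. The restriction $f|_A$ is a group homomorphism ($A\subseteq H_{\Mm}$ acts trivially on $M$), and a short cocycle computation using $a\in H_{\Mm}$ shows $f(gag^{-1}) = g\cdot f(a)$, i.e.\ $f|_A$ is $G_{\Kk}$-equivariant; its kernel is $A\cap N_f = C = N$, yielding a $G_{\Kk}$-module isomorphism $A/N \cong H_{\Mm}/N_f \cong M$. Symmetrically $A/N \cong M^{\vee}$, so $M\cong M^{\vee}$ as $G_{\Kk}$-modules. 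But then the two actions $G_{\Kk}\to \Aut(M)$ and $G_{\Kk}\to \Aut(M^{\vee})$ have the same kernel, i.e.\ $H_{\Mm} = H_{\Mm^{\vee}}$. Then $A = H_{\Mm}$ and $[H_{\Mm}:N] = p = [H_{\Mm}:N_f]$ combined with $N\subseteq N_f$ gives $N = N_f$; symmetrically $N = N_g$, so $N_f = N_g$, contradicting the standing assumption.

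Finally, for the two displayed isomorphisms, the main equality combined with $H_{\Mm}H_{\Mm^{\vee}} = H_{\Mm}N_g$ (from the first step) yields $H_{\Mm}N_g = N_fN_g$; the second isomorphism theorem then gives $H_{\Mm}/(H_{\Mm}\cap N_g) \cong N_fN_g/N_g \cong N_f/N$, so $[H_{\Mm}\cap N_g : N] = [H_{\Mm}:N_f] = p$. The natural map $(H_{\Mm}\cap N_g)/N \to H_{\Mm}/N_f$ induced by inclusion $H_{\Mm}\cap N_g\hookrightarrow H_{\Mm}$ has kernel $N_f\cap N_g = N$, giving an injection between two groups of order $p$, hence an isomorphism. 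The companion isomorphism $(H_{\Mm^{\vee}}\cap N_f)/N \cong H_{\Mm^{\vee}}/N_g$ is proved in the same way.
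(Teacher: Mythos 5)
Your proof is correct, but it follows a genuinely different path from the paper's. The paper's argument is shorter and more structural: it observes that $G_{\Kk}/N_f \cong H_{\Mm}/N_f \rtimes G_{\Kk}/H_{\Mm}$, where the prime-to-$p$ factor acts faithfully on the unique Sylow $p$-subgroup $H_{\Mm}/N_f$, so that \emph{every} nontrivial normal subgroup of $G_{\Kk}/N_f$ must contain $H_{\Mm}/N_f$; applying this to $N_fN_g/N_f$ (nontrivial precisely because $N_f\neq N_g$) gives $H_{\Mm}/N\leq N_fN_g/N$ at once, and symmetrically for $H_{\Mm^{\vee}}$, after which the displayed isomorphisms follow from one application of the second isomorphism theorem. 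You instead reduce the main equality to $H_{\Mm}\cap H_{\Mm^{\vee}}\subseteq N_fN_g$ via the auxiliary identities $(H_{\Mm}\cap H_{\Mm^{\vee}})N_g=H_{\Mm^{\vee}}$ and $(H_{\Mm}\cap H_{\Mm^{\vee}})N_f=H_{\Mm}$, and then split on whether $B=H_{\Mm}\cap N_g$ and $C=H_{\Mm^{\vee}}\cap N_f$ coincide: the case $B\neq C$ closes by index-one maximality, and the case $B=C$ is excluded by showing it would force $M\cong M^{\vee}$ as $G_{\Kk}$-modules, hence $H_{\Mm}=H_{\Mm^{\vee}}$, hence $N_f=N_g$, contradicting the standing assumption. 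This is more elementary (index-chasing plus one cocycle identity, with no appeal to the unique-Sylow-$p$ observation) at the cost of a case split and roughly twice the length. Two small imprecisions worth tidying: the phrase ``since $|B|=|C|$'' should read ``since $[A:B]=[A:C]$'' --- these groups are infinite, but both have index $p$ in $A=H_{\Mm}\cap H_{\Mm^{\vee}}$, which is what the argument actually uses; and $[A:B]$ is computed via the second isomorphism theorem and $AN_g=H_{\Mm^{\vee}}$ as $[H_{\Mm^{\vee}}:N_g]$, not $[H_{\Mm}:N_f]$ --- both equal $p$, so the slip is harmless.
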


\begin{proof}
Clearly, $N_fN_g/N\leq H_{\Mm}H_{\Mm^{\vee}}/N$, so it remains to show the reverse inclusion. We will show that $H_{\Mm}/N\leq N_fN_g/N$; the argument for $H_{\Mm^{\vee}}/N$ is identical. Observe that 
$G_{\Kk}/N_f\cong H_{\Mm}/N_f\rtimes G_{\Kk}/H_{\Mm}$,
where $H_{\Mm}/N_f \cong M$ has order $p$ and $G_{\Kk}/H_{\Mm}\hookrightarrow \Aut(M)$ has order prime to $p$. Thus, any non-trivial normal subgroup of $G_{\Kk}/N_f$ contains $H_{\Mm}/N_f$. Since $N_g\triangleleft G_{\Kk}$, the subgroup $N_fN_g/N_f$ is normal in $G_{\Kk}/N_f$ and $N_fN_g/N_f$ is non-trivial since we are assuming that $N_f\neq N_g$. Therefore, $H_{\Mm}/N_f\leq N_fN_g/N_f$ and hence $H_{\Mm}/N\leq N_fN_g/N$, as required. The last part follows by observing that
\[\frac{H_{\Mm}\cap N_g}{N}\cong \frac{N_f(H_{\Mm}\cap N_g)}{N_f}=\frac{N_fN_g}{N_f}\cap\frac{H_{\Mm}}{N_f}\]
and noting that $N_fN_g/N=H_{\Mm}H_{\Mm^{\vee}}/N$ implies that $N_fN_g/N_f=H_{\Mm}H_{\Mm^{\vee}}/N_f$.
\end{proof}

\begin{lemma}
\label{semidirect}
We have 
$$\Gal(L/K_s^{\ker(g_0)})=\frac{\ker(g_0)}{N}\cong \frac{H_{\Mm}\cap N_g}{N}\rtimes\frac{\ker(g_0)\cap\ker(f_0)}{N}.$$
\end{lemma}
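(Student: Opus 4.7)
The plan is to verify that the right-hand side realizes $\ker(g_0)/N$ as an internal semi-direct product, with $(H_{\Mm} \cap N_g)/N$ the normal factor and $(\ker(g_0) \cap \ker(f_0))/N$ its complement. Concretely, I will check three conditions: (1) $(H_{\Mm}\cap N_g)/N$ is normal in $\ker(g_0)/N$; (2) the intersection $(H_{\Mm}\cap N_g) \cap (\ker(g_0)\cap\ker(f_0))$ equals $N$; (3) the product of the two subgroups exhausts $\ker(g_0)$.

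For (1), I will observe that $H_{\Mm}$ is the kernel of $G_{\Kk}\to\Aut(M)$ and hence normal in $G_{\Kk}$, and that $N_g \triangleleft G_{\Kk}$ by the same inflation-restriction argument used earlier for $N_f$. Thus $H_{\Mm}\cap N_g$ is already normal in $G_{\Kk}$, and in particular in the subgroup $\ker(g_0)\supset N_g$. For (2), the crucial point is that because $H_{\Mm}$ acts trivially on $M$, the restriction $f_0|_{H_{\Mm}}$ is an honest homomorphism whose kernel is $N_f$ by definition, so $H_{\Mm}\cap\ker(f_0)=N_f$; intersecting further with $N_g$ gives $N_f\cap N_g = N$.

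Part (3) is the only step with any substance. I will use Lemma \ref{quotients} to note that the composition
$$\frac{H_{\Mm}\cap N_g}{N}\stackrel{\sim}{\longrightarrow}\frac{H_{\Mm}}{N_f}\stackrel{f}{\longrightarrow} M$$
is an isomorphism, so $f_0$ restricted to $H_{\Mm}\cap N_g$ surjects onto $M$. Given $x\in\ker(g_0)$, pick $y\in H_{\Mm}\cap N_g$ with $f_0(y)=f_0(x)$. The cocycle identity gives
$$f_0(y^{-1}x) = f_0(y^{-1}) + y^{-1}\cdot f_0(x) = y^{-1}\bigl(f_0(x)-f_0(y)\bigr)=0,$$
so $y^{-1}x\in\ker(f_0)$. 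Since $y\in N_g\subset\ker(g_0)$ and $x\in\ker(g_0)$, also $y^{-1}x\in\ker(g_0)$, yielding the factorization $x = y\cdot(y^{-1}x)$ in the desired product.

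The main (and really only) non-routine ingredient is the surjectivity of $f_0|_{H_{\Mm}\cap N_g}$ onto $M$, and this drops out of Lemma \ref{quotients} once one remembers that the $G_{\Kk}$-isomorphism $H_{\Mm}/N_f \cong M$ is induced by $f$, which agrees with $f_0$ on $H_{\Mm}$. The rest is bookkeeping with normal subgroups and the cocycle relation.
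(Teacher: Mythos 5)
Your proof is correct and follows essentially the same route as the paper's: it invokes Lemma \ref{quotients} to get surjectivity of $f_0$ restricted to $H_{\Mm}\cap N_g$, then factors an arbitrary $x\in\ker(g_0)$ as $y\cdot(y^{-1}x)$ with $y\in H_{\Mm}\cap N_g$ matching $f_0(x)$. You merely supply two details the paper leaves implicit --- the normality check and the explicit cocycle computation showing $y^{-1}x\in\ker(f_0)$ --- but the underlying argument is identical.
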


\begin{proof}
Clearly, $\frac{H_{\Mm}\cap N_g}{N}\cap\frac{\ker(g_0)\cap\ker(f_0)}{N}=0.$ It remains to show that $$\Bigl(\frac{H_{\Mm}\cap N_g}{N}\Bigr)\Bigl(\frac{\ker(g_0)\cap\ker(f_0)}{N}\Bigr)=\frac{\ker(g_0)}{N}.$$ Lemma \ref{quotients} shows that $\frac{H_{\Mm}\cap N_g}{N}\cong \frac{H_{\Mm}}{N_f}$. The cocycle $f_0$ gives an isomorphism $\frac{H_{\Mm}}{N_f}\rightarrow M$. So, if $s\in \ker(g_0)$, there exists some $h\in H_{\Mm}\cap N_g$ such that $f_0(h)=f_0(s)$. But then $s=hh^{-1}s$ and $h^{-1}s\in\ker(g_0)\cap\ker(f_0)$.
\end{proof}

We require that
\begin{equation*}
\sum_{i=0}^{p-1}{a_i e_{\rho^i}}=d
=e_s d e_s^{-1}
= \sum_{i=0}^{p-1}{s(a_i) e_se_{\rho^i}e_s^{-1}}.
\end{equation*}
for all $s\in \Gal(L/K_s^{\ker(g_0)})=\ker(g_0)/N$. Lemma \ref{semidirect} allows us to look separately at conjugation by elements in $(H_{\Mm}\cap N_g)/N$ and $(\ker(f_0)\cap\ker(g_0))/N$. First, we look at conjugation by $e_t$ for $t\in (\ker(f_0)\cap\ker(g_0))/N$.

%Recall from the definition of the multiplication on $A_{\varphi_0}$ that
%\begin{equation}
%\label{multiplication}
%e_se_t=\varphi_0(s,t)e_{st}\ \quad\forall\ s,t\in\Gal(L/K)
%\end{equation}
%where $\varphi_0(s,t)=(s\cdot g_0(t))(f_0(s)).$ 

\begin{lemma}
\label{conjbykers}
For all $t\in (\ker(f_0)\cap\ker(g_0))/N$, we have $e_te_{\rho^i}e_t^{-1}=e_{t\rho^i t^{-1}}.$
\end{lemma}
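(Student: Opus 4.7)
The key observation is that the explicit formula $\varphi_0(s,t)=(s\cdot g_0(t))(f_0(s))$ forces $\varphi_0(s,t)$ to equal $1\in\mu_p$ as soon as either $s\in\ker(f_0)$ or $t\in\ker(g_0)$: in the former case $f_0(s)=0$ and any homomorphism from $M$ to $\mu_p$ evaluated at $0$ returns $1$; in the latter case $s\cdot g_0(t)$ is the trivial element of $M^{\vee}$, whose value on $f_0(s)$ is $1$.

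Given this, I would simply expand $e_te_{\rho^i}e_t^{-1}$ using the two defining relations $e_se_u=\varphi_0(s,u)e_{su}$ and $e_sx=s(x)e_s$. Since $t$ lies in the subgroup $\ker(g_0)$, so does $t^{-1}$; equivalently, the cocycle relation $0=g_0(1)=g_0(t)+t\cdot g_0(t^{-1})$ together with $g_0(t)=0$ forces $g_0(t^{-1})=0$. In particular,
\[ e_te_{t^{-1}}=\varphi_0(t,t^{-1})\,e_1=1, \]
because the first argument lies in $\ker(f_0)$, so $e_t^{-1}=e_{t^{-1}}$. Then
\[ e_te_{\rho^i}e_t^{-1}=e_te_{\rho^i}e_{t^{-1}}=\varphi_0(t,\rho^i)\,\varphi_0(t\rho^i,t^{-1})\,e_{t\rho^it^{-1}}, \]
and both factors equal $1$: the first because $t\in\ker(f_0)$, the second because $t^{-1}\in\ker(g_0)$. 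This yields $e_te_{\rho^i}e_t^{-1}=e_{t\rho^it^{-1}}$ as claimed.

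There is no essential obstacle here; the whole argument is a one-line manipulation of the crossed-product multiplication once one notes the vanishing properties of $\varphi_0$ along $\ker(f_0)$ and $\ker(g_0)$. The only bookkeeping points are to identify $e_t^{-1}$ with $e_{t^{-1}}$ (which itself follows from the same vanishing) and to remember that $\ker(g_0)$, being the kernel of the induced homomorphism on $\Gal(L/K)$ by $g$, is closed under inversion.
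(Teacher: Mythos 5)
Your proof is correct and follows essentially the same route as the paper: observe from the formula $\varphi_0(s,t)=(s\cdot g_0(t))(f_0(s))$ that the cocycle is trivial whenever the relevant argument lies in the appropriate kernel, deduce $e_t^{-1}=e_{t^{-1}}$, and expand the conjugation. Your version merely spells out a few details the paper leaves implicit (closure of $\ker(g_0)$ under inversion and the slightly sharper vanishing criterion), so no further comment is needed.
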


\begin{proof}
If either $s$ or $t$ is in $(\ker(f_0)\cap\ker(g_0))/N$, then \eqref{phi} gives
 $$\varphi_0(s,t)=(s\cdot g_0(t))(f_0(s))=1$$
  and hence $e_se_t=e_{st}.$
Thus, for all $t\in (\ker(f_0)\cap\ker(g_0))/N$, we have $e_t^{-1}=e_{t^{-1}}$ and
$e_te_{\rho^i}e_t^{-1}=e_te_{\rho^i}e_{t^{-1}}=e_{t\rho^it^{-1}}.$
\end{proof}

%Proposition \ref{Gal(L/L_g^nu)} shows that the image of $\sigma$ in $\Gal(L/K)$ generates $\Gal(L/L_gF_{\Mm})$. Likewise, the image of $\rho$ in $\Gal(L/K)$ generates $\Gal(L/L_f F_{\Mm^{\vee}})$, which is a normal subgroup of $\Gal(L/K)=G_{\Kk}/N$. Hence, for each $s\in \Gal(L/L_f^{\langle\tau\rangle}L_g^{\langle\nu\rangle})$, there exists $\ell\in \bbN$ such that $s\rho s^{-1}\equiv\rho^{\ell}\pmod{N}$.

Lemma \ref{quotients} shows that $(H_{\Mm}\cap N_g)/N\cong H_{\Mm}/N_f$ is a cyclic group of order $p$. Let $\sigma$ be a generator of $(H_{\Mm}\cap N_g)/N$. Lemma \ref{quotients} allows us to assume that $\rho\in (N_f\cap H_{\Mm^{\vee}})/N$, which we will do from now on. In particular, $\sigma$ and $\rho$ act trivially on both $M$ and $M^{\vee}$. Now we consider conjugation by $e_{\sigma}$.

\begin{lemma}
\label{conjbysigma}
We have $e_{\sigma}e_{\rho^i}e_{\sigma}^{-1}=\zeta^i e_{\rho^i}$,
where $\zeta=g(\rho)(f(\sigma))\in K_s$ is a primitive 
$p$th root of unity.
\end{lemma}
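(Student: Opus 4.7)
The plan is to compute $e_{\sigma}e_{\rho^i}$ and $e_{\rho^i}e_{\sigma}$ directly from the multiplication rule $e_se_t=\varphi_0(s,t)e_{st}$ and show they differ by the scalar $\zeta^i$. This splits into three steps: (i) verify that $\sigma$ and $\rho$ commute in $\Gal(L/K)$; (ii) evaluate the cocycle $\varphi_0$ in each order; (iii) check that $\zeta$ is primitive.

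For (i), both $H_{\Mm}$ and $N_g$ are normal in $G_{\Kk}$, so $\rho\sigma\rho^{-1}\in(H_{\Mm}\cap N_g)/N=\langle\sigma\rangle$ and therefore $\rho\sigma\rho^{-1}=\sigma^k$ for some $k\in\{0,\ldots,p-1\}$. To pin down $k$, note that $\rho,\sigma\in H_{\Mm}$ act trivially on $M$, so the restriction $f_0|_{H_{\Mm}/N}$ is a group homomorphism and
$$f_0(\rho\sigma\rho^{-1})=f_0(\rho)+f_0(\sigma)-f_0(\rho)=f_0(\sigma),$$
which must equal $f_0(\sigma^k)=kf_0(\sigma)$. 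Since $\sigma\notin N_f$, the element $f_0(\sigma)$ is a generator of $M\cong\bbZ/p$, forcing $k=1$ and hence $\sigma\rho^i=\rho^i\sigma$ for all $i$.

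For (ii), since $\sigma\in H_{\Mm^{\vee}}$ fixes $M^{\vee}$ pointwise and $g_0|_{H_{\Mm^{\vee}}/N}$ is a group homomorphism,
$$\varphi_0(\sigma,\rho^i)=(\sigma\cdot g_0(\rho^i))(f_0(\sigma))=(ig_0(\rho))(f_0(\sigma))=g_0(\rho)(f_0(\sigma))^i=\zeta^i.$$
Conversely, $\sigma\in N_g=\ker(g_0|_{H_{\Mm^{\vee}}})$ gives $g_0(\sigma)=0$, so $\varphi_0(\rho^i,\sigma)=(\rho^i\cdot 0)(f_0(\rho^i))=1$. Combining with step (i):
$$e_{\sigma}e_{\rho^i}=\varphi_0(\sigma,\rho^i)e_{\sigma\rho^i}=\zeta^i e_{\rho^i\sigma}=\zeta^i\varphi_0(\rho^i,\sigma)e_{\rho^i\sigma}=\zeta^i e_{\rho^i}e_{\sigma},$$
which rearranges to the stated identity.

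For (iii), $f_0(\sigma)$ generates $M$ because $\sigma$ generates $(H_{\Mm}\cap N_g)/N\cong H_{\Mm}/N_f$ by Lemma \ref{quotients}, and the induced map $H_{\Mm}/N_f\to M$ is an isomorphism. Meanwhile $g_0(\rho)$ is a nonzero element of $M^{\vee}=\Hom(M,\mu_p)$ (since $\rho\notin N_g$), hence an isomorphism $M\xrightarrow{\sim}\mu_p$. Evaluating at the generator $f_0(\sigma)$ yields a generator of $\mu_p$, so $\zeta$ is a primitive $p$-th root of unity.

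The main obstacle is step (i): the commutation $\sigma\rho=\rho\sigma$ is not immediate from the group-theoretic setup alone but requires combining normality of $H_{\Mm}$ and $N_g$ with the cocycle identity for $f_0$ on $H_{\Mm}$. Once that is in hand, steps (ii) and (iii) are short bookkeeping calculations with the explicit formula \eqref{phi}.
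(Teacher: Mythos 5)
Your proposal is correct, and its overall shape matches the paper's proof: reduce everything to the commutativity of $\sigma$ and $\rho$ in $\Gal(L/K)$, evaluate $\varphi_0$ explicitly from \eqref{phi}, and deduce primitivity of $\zeta$ from the isomorphisms $H_{\Mm}/N_f\cong M$ and $H_{\Mm^{\vee}}/N_g\cong M^{\vee}$ induced by $f$ and $g$. The one place where you take a genuinely different route is your step (i). The paper disposes of the commutation in two lines of pure group theory: the commutator of $\sigma$ and $\rho$ lies in both normal subgroups $(N_g\cap H_{\Mm})/N$ and $(N_f\cap H_{\Mm^{\vee}})/N$, whose intersection sits inside $(N_f\cap N_g)/N$ and is therefore trivial. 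You instead use normality of $H_{\Mm}\cap N_g$ to get $\rho\sigma\rho^{-1}=\sigma^k$ and then pin down $k=1$ by applying $f_0$, which is legitimate: since $\rho\in (N_f\cap H_{\Mm^{\vee}})/N\subset H_{\Mm}/N$, all three factors lie in $H_{\Mm}/N$, where $f_0$ restricts to a genuine homomorphism with kernel $N_f/N$, and $f_0(\sigma)$ generates $M$ because $\sigma$ generates $(H_{\Mm}\cap N_g)/N\cong H_{\Mm}/N_f$. The paper's argument is slightly cleaner in that it needs no cocycle input, while yours makes explicit why the conjugation exponent is forced to be $1$ rather than some other unit; both are valid. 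Your remaining bookkeeping — computing $\varphi_0(\sigma,\rho^i)=\zeta^i$ and $\varphi_0(\rho^i,\sigma)=1$ and rearranging $e_{\sigma}e_{\rho^i}=\zeta^i e_{\rho^i}e_{\sigma}$ — is a harmless variant of the paper's route through $e_{\sigma^{-1}}$ and $\varphi_0(\sigma,\rho^i\sigma^{-1})$, and your primitivity argument coincides with the paper's.
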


\begin{proof}
Recall that $\sigma\in (H_{\Mm}\cap N_g)/N$, so $g(\sigma)=0$. Hence, \eqref{phi} gives
$$\varphi_0(t,\sigma^i)=1\ \quad\forall t\in G_{\Kk}/N, \forall i \in \bbZ.$$
In particular, $e_{\sigma}^{-1}=e_{\sigma^{-1}}$ and we have
\[e_{\sigma}e_{\rho^i}e_{\sigma}^{-1}=e_{\sigma}e_{\rho^i}e_{\sigma^{-1}} = e_{\sigma}\varphi_0(\rho^i, \sigma^{-1})e_{\rho^i\sigma^{-1}} = e_{\sigma}e_{\rho^i\sigma^{-1}} %= \varphi_0(\sigma, \rho^i\sigma^{-1})e_{\sigma\rho^i\sigma^{-1}}
=\varphi_0(\sigma, \rho^i\sigma^{-1})e_{\rho^i}.\]
The last line holds because $\sigma$ and $\rho$ commute in $G_{\Kk}/N$, since their commutator is in the intersection of the normal subgroups $(N_g\cap H_{\Mm})/N$ and $(N_f\cap H_{\Mm^{\vee}})/N$, which is trivial. Now, 
\begin{displaymath}
\begin{array}{llll}
\varphi_0(\sigma, \rho^i\sigma^{-1})&=& (\sigma\cdot g_0(\rho^i\sigma^{-1}))(f_0(\sigma))&\\
&=& (\sigma\cdot g_0(\rho^i))(f_0(\sigma))&\textrm{since $g_0(\sigma^{-1})=0$}\\
&=&g(\rho^i)(f(\sigma)) &\textrm{since $\sigma$ acts trivially on $M^{\vee}$}\\
&=&(g(\rho)(f(\sigma)))^i &\textrm{since $g$ gives a homomorphism on $H_{\Mm^{\vee}}$}.\\
\end{array}
\end{displaymath}
Therefore, it suffices to show that $\zeta=g(\rho)(f(\sigma))$ is a primitive $p$th root of unity. We know that $f$ induces an isomorphism $H_{\Mm}/N_f\tilde{\longrightarrow} M$ and $f(\sigma)$ generates $M$ as an abelian group. Likewise, $g$ induces an isomorphism $H_{\Mm^{\vee}}/N_g\tilde{\longrightarrow} M^{\vee}$ and $g(\rho)$ generates $M^{\vee}=\Hom(M,\mu_p)$ as an abelian group. Thus, $\zeta=g(\rho)(f(\sigma))$ generates $\mu_p$ as an abelian group.
\end{proof}

Combining the results of Lemmas \ref{conjbykers} and \ref{conjbysigma}, we see that $d=\sum_{i=0}^{p-1}{a_i e_{\rho^i}}$ commutes with $e_t$ for all $t\in\Gal(L/K_s^{\ker(g_0)})$ if and only if 
\begin{itemize}
\item $\sigma(a_i)=\zeta^{-i}a_i\ \quad\forall\ 0\leq i\leq p-1$, and 
\item if $t\in (\ker(f_0)\cap\ker(g_0))/N$ 
is such that $t\rho t^{-1}=\rho^{\ell}$, then $$t(a_i)=a_{\ell i}\ \quad\forall\ 0\leq i\leq p-1.$$
\end{itemize}

\begin{proposition}
\label{a_i}
Let $\alpha\in K_s^{\ker(f_0)}$ be such that $K_s^{\ker(f_0)}=K(\alpha)$. For each $0\leq i\leq p-1$, let $a_i=\sum_{j=0}^{p-1}{\zeta^{ij}\sigma^{j}(\alpha)}$. Then $d=\sum_{i=0}^{p-1}{a_i e_{\rho^i}}$ commutes with $e_t$ for all $t\in\Gal(L/K_s^{\ker(g_0)})$.
\end{proposition}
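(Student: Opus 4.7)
The plan is to verify directly the two conditions extracted in the discussion preceding the statement: that $\sigma(a_i)=\zeta^{-i}a_i$ for all $i$, and that $t(a_i)=a_{\ell i}$ whenever $t\in(\ker(f_0)\cap\ker(g_0))/N$ satisfies $t\rho t^{-1}=\rho^{\ell}$. Both amount to direct manipulation of the sum $a_i=\sum_{j}\zeta^{ij}\sigma^j(\alpha)$, and the substantive content lies in controlling how $\sigma$ and $t$ act on $\zeta\in\mu_p$, on $\alpha$, and on powers of $\sigma$ by conjugation.

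The first condition is quick. Since $N_g\subset H_{\Mm^{\vee}}$, the element $\sigma\in(H_{\Mm}\cap N_g)/N$ actually lies in $(H_{\Mm}\cap H_{\Mm^{\vee}})/N$, so $\sigma$ fixes $\mu_p$ and in particular $\sigma(\zeta)=\zeta$. Applying $\sigma$ to the defining formula for $a_i$ and reindexing $k=j+1$ (using $\sigma^p(\alpha)=\alpha$, which holds since $\sigma$ has order $p$ in $\Gal(L/K)$ and $\alpha\in K_s^{\ker(f_0)}$) yields $\sigma(a_i)=\zeta^{-i}a_i$.

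For the second condition, because $\langle\sigma\rangle=(H_{\Mm}\cap N_g)/N$ is a normal subgroup of $\Gal(L/K)$, there exists $m\in(\bbZ/p)^{\times}$ with $t\sigma t^{-1}=\sigma^m$; similarly, let $c\in(\bbZ/p)^{\times}$ be defined by $t(\zeta)=\zeta^c$. Since $t\in\ker(f_0)$ fixes $K(\alpha)$, we have $t(\alpha)=\alpha$ and hence $t(\sigma^j(\alpha))=\sigma^{mj}(\alpha)$. Substituting into $t(a_i)$ and reindexing by $k\equiv mj\pmod p$ gives $t(a_i)=a_{icm^{-1}}$, so the claim reduces to the congruence $c\equiv\ell m\pmod p$.

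Establishing $c=\ell m$ is the technical core and the step I expect to be the only delicate one. I plan to exploit the cocycle relation for $g_0$ together with $g_0(t)=0$ (and $g_0(t^{-1})=-t^{-1}g_0(t)=0$) to collapse $g_0(t\rho t^{-1})$ down to $t\cdot g_0(\rho)$; since $g_0|_{H_{\Mm^{\vee}}}$ is a $G_{\Kk}$-homomorphism, the left-hand side also equals $\ell\,g_0(\rho)$, so $t$ acts on $g_0(\rho)\in M^{\vee}$ as multiplication by $\ell$. The symmetric argument with $f_0$ shows that $t\cdot f_0(\sigma)=m\,f_0(\sigma)$ in $M$. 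The $G_{\Kk}$-equivariance of the evaluation pairing $M^{\vee}\times M\to\mu_p$, namely $s\cdot(\phi(x))=(s\cdot\phi)(s\cdot x)$, then yields $t(\zeta)=(t\cdot g_0(\rho))(t\cdot f_0(\sigma))=(\ell g_0(\rho))(m f_0(\sigma))=\zeta^{\ell m}$, giving $c=\ell m$ as required. The bookkeeping around the mixed covariant/contravariant action on $M^{\vee}=\Hom(M,\mu_p)$ is where I expect to have to be most careful.
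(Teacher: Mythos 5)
Your proof is correct and follows essentially the same route as the paper's: verify $\sigma(a_i)=\zeta^{-i}a_i$ from $\sigma$ fixing $\zeta$, and verify $t(a_i)=a_{\ell i}$ by reindexing after computing how $t$ acts on $\zeta$, $\alpha$, and conjugates $\sigma$. Where the paper simply invokes the $G_{\Kk}$-module isomorphisms $H_{\Mm}/N_f\cong M$ and $H_{\Mm^{\vee}}/N_g\cong M^{\vee}$ to read off that $t$ acting by $k$ on $M$ forces $t\sigma t^{-1}=\sigma^k$ and forces $t$ to act by $k\ell$ on $\mu_p$, you derive the same compatibilities via explicit cocycle manipulations with $f_0$ and $g_0$ and the equivariance of the evaluation pairing --- the same argument, with a few more intermediate steps made explicit.
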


\begin{proof}
In order to show that $\sigma(a_i)=\zeta^{-i}a_i$, it suffices to show that $\sigma$ fixes $\zeta\in \mu_p$. But %$\sigma\in (H_{\Mm}\cap N_g)/N\subset (H_{\Mm}\cap H_{\Mm^{\vee}})/N.$Thus, 
$\sigma$ acts trivially on both $M$ and $M^{\vee}=\Hom(M,\mu_p)$, so $\sigma$ must also act trivially on $\mu_p$. Now, let $t\in(\ker(f_0)\cap\ker(g_0))/N$ and suppose that $t\rho t^{-1}=\rho^{\ell}$. It suffices to show that $t(a_i)=a_{\ell i}$. We have
\begin{eqnarray*}
t(a_i)&=&\sum_{j=0}^{p-1}{t(\zeta)^{ij}t\sigma^{j}(\alpha)}
=\sum_{j=0}^{p-1}{t(\zeta)^{ij}(t\sigma t^{-1})^{j}t(\alpha)}\\
&=&\sum_{j=0}^{p-1}{t(\zeta)^{ij}(t\sigma t^{-1})^{j}(\alpha)}\ \quad\quad\textrm{since $t$ fixes $\alpha\in K_s^{\ker(f_0)}$.}\\
\end{eqnarray*}
Suppose that $t$ acts as multiplication by $k$ on $M$. Then $t$ acts as multiplication by $k\ell$ on $\mu_p$. We have isomorphisms of $G_{\Kk}$-modules $H_{\Mm}/{N_f}\cong M$ and $H_{\Mm^{\vee}}/{N_g}\cong M^{\vee}$ induced by $f$ and $g$ respectively. Hence, 
 \begin{eqnarray*}
 t(a_i)&=&\sum_{j=0}^{p-1}{t(\zeta)^{ij}(t\sigma t^{-1})^{j}(\alpha)}=\sum_{j=0}^{p-1}{\zeta^{ijk\ell}\sigma^{jk}(\alpha)}\\
 &=&\sum_{j=0}^{p-1}{t(\zeta)^{ij\ell}\sigma^{j}(\alpha)}= a_{\ell i}\\
 \end{eqnarray*}
as required.
\end{proof}

So we have demonstrated a candidate for $d$. It remains to check that this element is not in $K_s^{\ker(g_0)}$. It suffices to show that some $a_i$ with $i\geq 1$ is nonzero.

\begin{proposition}
\label{a_i nonzero}
Let $\alpha$ be such that $K_s^{\ker(f_0)}=K(\alpha)$ and $\Tr_{K_s^{\ker(f_0)}/K}(\alpha)=0$.
Then there exists $i\geq 1$ with $a_i=\sum_{j=0}^{p-1}{\zeta^{ij}\sigma^{j}(\alpha)}$ nonzero. Consequently, $d=\sum_{i=0}^{p-1}{a_i e_{\rho^i}}$ is not in $L$.
\end{proposition}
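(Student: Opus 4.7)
The approach is to recognise $(a_0,a_1,\dots,a_{p-1})$ as the discrete Fourier transform (with respect to the primitive $p$-th root of unity $\zeta$) of the sequence $(\alpha,\sigma(\alpha),\dots,\sigma^{p-1}(\alpha))$. Since this transform is invertible, vanishing of every $a_i$ with $i\geq 1$ would force $\alpha$ to be determined entirely by $a_0$; but $a_0$ will turn out to be $\Tr_{K(\alpha)/K}(\alpha)$, which is zero by hypothesis, whereas $\alpha\neq 0$. This contradiction yields the desired nonzero $a_i$.

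The first step is to verify that $\sigma^0(\alpha),\sigma^1(\alpha),\dots,\sigma^{p-1}(\alpha)$ are precisely the distinct Galois conjugates of $\alpha$ over $K$. The stabiliser of $\alpha$ in $\Gal(L/K)$ equals $\ker(f_0)/N$, since $K(\alpha)=K_s^{\ker(f_0)}$. For $\sigma\in H_{\Mm}/N$, we have $\sigma^j\in\ker(f_0)/N$ iff $\sigma^j\in(H_{\Mm}\cap\ker(f_0))/N=N_f/N$; and since the image of $\sigma$ generates the cyclic group $H_{\Mm}/N_f$ of order $p$, this occurs precisely when $p\mid j$. As $[K(\alpha):K]=p$, this orbit exhausts the conjugates, and in particular $a_0=\sum_{j=0}^{p-1}\sigma^j(\alpha)=\Tr_{K(\alpha)/K}(\alpha)=0$.

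The second step is to run the inverse Fourier computation. Swapping the order of summation and using the identity $\sum_{i=0}^{p-1}\zeta^{ij}=p\cdot\delta_{j,0}$ gives
$$\sum_{i=0}^{p-1}a_i=\sum_{j=0}^{p-1}\sigma^j(\alpha)\sum_{i=0}^{p-1}\zeta^{ij}=p\alpha.$$
Since $\alpha\neq 0$ (as $\alpha\notin K$) and $\Char(K)\neq p$, the right-hand side is nonzero. Combined with $a_0=0$, this forces $\sum_{i=1}^{p-1}a_i=p\alpha\neq 0$, so some $a_i$ with $i\geq 1$ is indeed nonzero.

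Finally, $d\notin L$ is immediate: $\{e_{\rho^i}\}_{0\leq i\leq p-1}$ is a left $L$-basis of $B$, so $L$ sits inside $B$ as $L\cdot e_{\rho^0}$, and $d\in L$ would require $a_i=0$ for all $i\geq 1$, contradicting the previous step. The only point requiring genuine care is the Galois-theoretic identification of the $\sigma^j(\alpha)$ with the full orbit of $\alpha$, since $K(\alpha)/K$ need not be Galois; once that is in place, the rest is a routine application of Fourier inversion.
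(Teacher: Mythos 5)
Your proof is correct, and it differs from the paper's precisely in the one step that matters: how to see that not all $a_i$ vanish. The paper views $(a_0,\dots,a_{p-1})^T$ as $V(\alpha,\sigma(\alpha),\dots,\sigma^{p-1}(\alpha))^T$ with $V=(\zeta^{ij})$ a Vandermonde matrix and invokes $\det V=\prod_{i<j}(\zeta^j-\zeta^i)\neq 0$ to conclude that the image vector is nonzero; you instead evaluate the inverse transform at a single point, namely $\sum_{i=0}^{p-1}a_i=\sum_{j}\sigma^j(\alpha)\sum_i\zeta^{ij}=p\alpha\neq 0$ (valid since $\zeta$ is a primitive $p$th root of unity, $\Char(K)\neq p$, and $\alpha\notin K$), which together with $a_0=0$ forces some $a_i\neq 0$ with $i\geq 1$. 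Your identity is exactly the one the paper proves later when showing $R(X,1)=Q(X)$, so the two arguments are close cousins; yours is slightly more economical (no determinant formula, only orthogonality of the powers of $\zeta$) and even quantitative, giving $\sum_{i\geq 1}a_i=p\alpha$. You also make explicit a point the paper leaves implicit in writing $a_0=\Tr_{K_s^{\ker(f_0)}/K}(\alpha)$: that $\sigma^0(\alpha),\dots,\sigma^{p-1}(\alpha)$ are the $p$ distinct conjugates of $\alpha$, which you justify correctly via $H_{\Mm}\cap\ker(f_0)=N_f$ and the fact that $\sigma$ maps to a generator of $H_{\Mm}/N_f$; and your deduction that $d\notin L$ from the left $L$-basis $\{e_{\rho^i}\}$ of $B$ is the intended one.
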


\begin{proof}
Let $V$ denote the Vandermonde matrix $(\zeta^{ij})_{0\leq i, j\leq p-1}$. Then $a_i$ is the $i$th row of $V(\alpha, \sigma(\alpha), \dots, \sigma^{p-1}(\alpha))^T$. Also, $$\det(V)=\prod_{0\leq i<j\leq p-1}{(\zeta^j-\zeta^i)}\neq 0.$$
Thus, $V(\alpha, \sigma(\alpha), \dots, \sigma^{p-1}(\alpha))^T$ is nonzero, so it has at least one nonzero row. In other words, at least one of the $a_i$'s is nonzero. But $$a_0=\alpha+\sigma(\alpha)+\cdots +\sigma^{p-1}(\alpha)=\Tr_{K_s^{\ker(f_0)}/K}(\alpha)=0.$$ Hence, there exists $i\geq 1$ with $a_i\neq 0$, as required.
\end{proof}

Since we assumed from the start that the characteristic of $K$ is not $p$, we can always arrange that $\Tr_{K_s^{\ker(f_0)}/K}(\alpha)=0$, by subtracting $p^{-1}\Tr_{K_s^{\ker(f_0)}/K}(\alpha)$ from $\alpha$. Thus, by Lemma \ref{generators for D} $\End_{A_{\varphi_0}}(\cS)^{\textrm{opp}}$ is generated as a $K$-algebra by $d$ together with the elements of $K_s^{\ker(g_0)}$.

\section{A minimal polynomial}
\label{min poly}

Our next aim is to show that the $K$-subalgebra of $\End_{A_{\varphi_0}}(\cS)^{\textrm{opp}}$ generated by $d$ is isomorphic to $K(\alpha)=K_s^{\ker(f_0)}$. We will do this by showing that $d$ and $p\alpha$ satisfy the same minimal polynomial over $K$. Let $\sigma$ be a generator for $(N_g\cap H_{\Mm})/N$ and let $\rho$ be a generator for $(N_f\cap H_{\Mm^{\vee}})/N$. Recall that $\alpha\in L$ is such that $K_s^{\ker(f_0)}=K(\alpha)$ and $\Tr_{K_s^{\ker(f_0)}/K}(\alpha)=0$. We have $d=\sum_{i=0}^{p-1}{a_i e_{\rho^i}}$, where $a_i=\sum_{j=0}^{p-1}{\zeta^{ij}\sigma^{j}(\alpha)}$ and $\zeta=\varphi_0(\sigma,\rho)=g(\rho)(f(\sigma))$. Similarly, let $\beta$ be such that $K_s^{\ker(g_0)}=K(\beta)$ and $\Tr_{K_s^{\ker(g_0)}/K}(\beta)=0.$ Let $b_i=\sum_{j=0}^{p-1}{\zeta^{ij}\rho^j(\beta)}$. In the proof of Proposition \ref{a_i nonzero}, we showed that $a_i\neq 0$ for some $i\geq 1$. The same argument shows that $b_m\neq 0$ for some $m\geq 1$. Choose such a $b_m$ and denote it by $\cB$. We would like to define a polynomial with roots $\cB^k d\cB^{-k}$ for $0\leq k\leq p-1$. We show that $\cB^k d\cB^{-k}$ commutes with $\cB^{\ell} d\cB^{-\ell}$ for every $k,\ell\in\bbZ$, so that $P(X)=\prod_{k=0}^{p-1}{(X-\cB^kd\cB^{-k})}$ is the desired polynomial. First, we prove two auxiliary lemmas.

\begin{lemma}
\label{B^kdB^-k}
For all $k\in\bbZ$, we have
\begin{equation*}
\cB^kd\cB^{-k}=\sum_{i=0}^{p-1}{\zeta^{ikm}a_i e_{\rho^i}}=\sum_{i=0}^{p-1}{\sigma^{-km}(a_i) e_{\rho^i}},
\end{equation*}
where $\cB=b_m=\sum_{j=0}^{p-1}{\zeta^{mj}\rho^j(\beta)}\neq 0$.
\end{lemma}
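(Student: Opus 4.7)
The plan is direct: expand $\cB^{k} d \cB^{-k}$ using the multiplication table of $A_{\varphi_0}$ and simplify. Since $\cB = b_m \in L^{*}$, it embeds in $A_{\varphi_0}$ as $\cB \cdot e_1$, and the defining relation $e_{\rho^i} x = \rho^i(x) e_{\rho^i}$ for $x \in L$ lets me push $\cB^{-1}$ past each $e_{\rho^i}$, so the whole question reduces to evaluating the scalar $\cB \cdot \rho^i(\cB^{-1})$ for each $i$.

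The main computational step is therefore to pin down how $\rho$ acts on $\cB$. Because $\rho \in (N_f \cap H_{\Mm^{\vee}})/N$ was chosen to act trivially on both $M$ and $M^{\vee} = \Hom(M, \mu_p)$, it fixes $\zeta \in \mu_p$. Combining this with $\rho^p = 1$, a single reindexing of
$$\cB = \sum_{j=0}^{p-1} \zeta^{mj} \rho^j(\beta)$$
gives $\rho(\cB) = \zeta^{-m} \cB$, hence $\rho^i(\cB) = \zeta^{-im} \cB$, and so $\cB \cdot \rho^i(\cB^{-1}) = \zeta^{im}$. Substituting into
$$\cB d \cB^{-1} = \sum_{i=0}^{p-1} \cB a_i \rho^i(\cB^{-1}) e_{\rho^i}$$
yields the first equality for $k=1$. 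Induction on $k$ then follows at once, since conjugation by $\cB$ preserves the shape $\sum_i (\cdot)\, e_{\rho^i}$, multiplying the $i$-th coefficient by $\zeta^{im}$ at each step (and for $k<0$ one simply conjugates by $\cB^{-1}$ using the same formula).

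The second equality is then immediate from Proposition \ref{a_i}, where the defining condition on the coefficients was $\sigma(a_i) = \zeta^{-i} a_i$. Iterating gives $\sigma^{-km}(a_i) = \zeta^{ikm} a_i$, exactly the scalars appearing in the first equality.

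I do not foresee a serious obstacle here. The only non-bookkeeping ingredient is the fact that $\rho$ fixes $\zeta$, which is forced by the earlier normalisation that both $\rho$ and $\sigma$ act trivially on $M$ and $M^{\vee}$; everything else is routine expansion in the multiplication table of $A_{\varphi_0}$.
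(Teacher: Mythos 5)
Your proposal is correct and follows essentially the same route as the paper: push $\cB^{\pm k}$ past the $e_{\rho^i}$ via $e_{\rho^i}x=\rho^i(x)e_{\rho^i}$, use that $\rho$ fixes $\zeta$ so that $\rho(\cB)=\zeta^{-m}\cB$ and hence $\cB^k\rho^i(\cB^{-k})=\zeta^{ikm}$, and then convert $\zeta^{ikm}a_i$ into $\sigma^{-km}(a_i)$ via $\sigma(a_i)=\zeta^{-i}a_i$. The only cosmetic difference is that you do $k=1$ and induct, whereas the paper computes the general $k$ in one line; the content is identical.
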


\begin{proof}
We have
\begin{eqnarray*}
\cB^kd\cB^{-k}&=&\cB^k \sum_{i=0}^{p-1}{a_i e_{\rho^i}} \cB^{-k}
= \sum_{i=0}^{p-1}{a_i \cB^k e_{\rho^i}\cB^{-k}}
=\sum_{i=0}^{p-1}{a_i \cB^k\rho^i(\cB^{-k})e_{\rho^i}}\\
&=&\sum_{i=0}^{p-1}{a_i \cB^k \zeta^{ikm}\cB^{-k}e_{\rho^i}}
=\sum_{i=0}^{p-1}{\zeta^{ikm}a_i e_{\rho^i}}
=\sum_{i=0}^{p-1}{\sigma^{-km}(a_i) e_{\rho^i}}.
\end{eqnarray*}
\end{proof}

\begin{lemma}
\label{powers of rho}
For all $i,j,k\in\bbZ$, we have $e_{\rho^i}\sigma^k(a_j) e_{\rho^j}=\sigma^k(a_j) e_{\rho^{i+j}}.$
\end{lemma}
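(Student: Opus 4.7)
The plan is to reduce the identity to two separate calculations: a computation of the cocycle value $\varphi_0(\rho^i,\rho^j)$, and a check that $\rho^i$ fixes $\sigma^k(a_j)$ as an element of $L$.

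First I would apply the multiplication rule $e_s x = s(x)e_s$ for $x\in L$ and the cocycle rule $e_s e_t = \varphi_0(s,t)e_{st}$ to rewrite the left-hand side as
\[
e_{\rho^i}\sigma^k(a_j)e_{\rho^j} = \rho^i\bigl(\sigma^k(a_j)\bigr)\,\varphi_0(\rho^i,\rho^j)\,e_{\rho^{i+j}}.
\]
So it suffices to show that $\varphi_0(\rho^i,\rho^j)=1$ and $\rho^i(\sigma^k(a_j))=\sigma^k(a_j)$.

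For the first, recall that $\rho$ was chosen in $(N_f\cap H_{\Mm^{\vee}})/N$. Since $N_f\subset H_{\Mm}$ and $f_0$ restricted to $H_{\Mm}$ is a homomorphism with kernel $N_f$, every power $\rho^i$ lies in $H_{\Mm}$ and satisfies $f_0(\rho^i)=0$. The explicit formula $\varphi_0(s,t)=(s\cdot g_0(t))(f_0(s))$ from \eqref{phi} then evaluates to $1\in\mu_p$, since the homomorphism is being applied to the zero element of $M$.

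For the second, I would use three facts established earlier: (i) $\rho$ and $\sigma$ commute in $G_{\Kk}/N$ (noted in the proof of Lemma \ref{conjbysigma}); (ii) since $\rho\in H_{\Mm}\cap H_{\Mm^{\vee}}$, it acts trivially on both $M$ and $M^{\vee}=\Hom(M,\mu_p)$, hence trivially on $\mu_p$, so $\rho(\zeta)=\zeta$; and (iii) $\rho\in N_f\subset\ker(f_0)$, so $\rho$ fixes $K_s^{\ker(f_0)}=K(\alpha)$, hence $\rho(\alpha)=\alpha$. Writing $a_j=\sum_{\ell=0}^{p-1}\zeta^{j\ell}\sigma^{\ell}(\alpha)$ and pushing $\rho$ through the sum using (ii), (i), and (iii) in that order gives $\rho(a_j)=a_j$, and then commuting $\rho$ with $\sigma^k$ gives $\rho(\sigma^k(a_j))=\sigma^k(a_j)$. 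Iterating yields $\rho^i(\sigma^k(a_j))=\sigma^k(a_j)$, completing the proof.

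There is no real obstacle here — the lemma is a bookkeeping statement about the axioms of the crossed product $A_{\varphi_0}$, and the only subtlety is correctly identifying which group-theoretic facts about $\rho$ force both $\varphi_0(\rho^i,\rho^j)=1$ and the $\rho$-invariance of the $a_j$. These are precisely the properties built into the choice $\rho\in(N_f\cap H_{\Mm^{\vee}})/N$.
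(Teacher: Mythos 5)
Your proof is correct and follows essentially the same route as the paper: reduce via the crossed-product relations to the two facts $\varphi_0(\rho^i,\rho^j)=1$ (from $f_0(\rho^i)=0$ and formula \eqref{phi}) and $\rho$-invariance of $\sigma^k(a_j)$ (which the paper dismisses as ``clear'' and you justify via $\rho(\alpha)=\alpha$, $\rho(\zeta)=\zeta$, and the commutation of $\rho$ with $\sigma$). No gaps; you have merely spelled out details the paper leaves implicit.
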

\begin{proof}
Since $\rho\in (N_f\cap H_{\Mm^{\vee}})/N$, clearly $\rho$ fixes $\sigma^k(a_j)=\sum_{\ell=0}^{p-1}{\zeta^{j\ell}\sigma^{\ell+k}(\alpha)}$. Moreover, $f(\rho)=0$ and so \eqref{phi} gives $\varphi_0(\rho^i,\rho^j)=1$ for all $i,j\in\bbZ$. %we have 
%$$\varphi_0(\rho^i,\rho^j)=(\rho^i\cdot g_0(\rho^j))(f_0(\rho^i))= (\rho^i\cdot g_0(\rho^j))(0)=1,$$
%because $\rho^j\in N_f\subset \ker(f_0)$. 
Hence, %we have
$$e_{\rho^i}\sigma^k(a_j) e_{\rho^j}=\rho^i\sigma^k(a_j)e_{\rho^i}e_{\rho^j}=\sigma^k(a_j) \varphi_0(\rho^i,\rho^j)e_{\rho^{i+j}}=\sigma^k(a_j) e_{\rho^{i+j}}.$$
\end{proof}

\begin{corollary}
\label{roots commute}
For all $k,\ell\in\bbZ$, $\cB^k d\cB^{-k}$ commutes with $\cB^{\ell} d\cB^{-\ell}$.
\end{corollary}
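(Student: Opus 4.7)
The plan is to compute the product $(\cB^k d\cB^{-k})(\cB^{\ell}d\cB^{-\ell})$ directly, using the two lemmas that immediately precede the corollary, and to observe that the resulting expression is manifestly symmetric in $k$ and $\ell$. Since $\cB^k d\cB^{-k}$ and $\cB^{\ell}d\cB^{-\ell}$ are already expressed in terms of the basis $\{e_{\rho^i}\}_{0\le i\le p-1}$ of $B$ (with coefficients in $L$), this is purely a bookkeeping computation.

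First I would invoke Lemma \ref{B^kdB^-k} to write
\[
\cB^k d\cB^{-k}=\sum_{i=0}^{p-1}\sigma^{-km}(a_i)\,e_{\rho^i},\qquad
\cB^{\ell}d\cB^{-\ell}=\sum_{j=0}^{p-1}\sigma^{-\ell m}(a_j)\,e_{\rho^j}.
\]
Multiplying these out and applying Lemma \ref{powers of rho} term by term to the products $e_{\rho^i}\sigma^{-\ell m}(a_j)e_{\rho^j}=\sigma^{-\ell m}(a_j)e_{\rho^{i+j}}$, I obtain
\[
(\cB^k d\cB^{-k})(\cB^{\ell}d\cB^{-\ell})=\sum_{i,j=0}^{p-1}\sigma^{-km}(a_i)\,\sigma^{-\ell m}(a_j)\,e_{\rho^{i+j}}.
\]

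Now the coefficients $\sigma^{-km}(a_i)$ and $\sigma^{-\ell m}(a_j)$ both lie in the commutative field $L$, so they commute with each other. Swapping the roles of $i$ and $j$ in the double sum (a permutation that preserves $i+j$) converts the expression for $(\cB^k d\cB^{-k})(\cB^{\ell}d\cB^{-\ell})$ into the analogous expression for $(\cB^{\ell}d\cB^{-\ell})(\cB^k d\cB^{-k})$, which yields the desired commutativity.

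I do not anticipate a serious obstacle here; the only thing to be careful about is the application of Lemma \ref{powers of rho}, which requires knowing that $\rho$ fixes each $\sigma^{-\ell m}(a_j)$ and that $\varphi_0(\rho^i,\rho^j)=1$. Both facts are already encoded in that lemma, so the argument reduces to the symbolic manipulation above together with the commutativity of $L$.
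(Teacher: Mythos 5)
Your proposal is correct and follows essentially the same route as the paper: expand both conjugates via Lemma \ref{B^kdB^-k}, push the coefficients past the $e_{\rho^i}$ using Lemma \ref{powers of rho}, and conclude by the symmetry of the resulting double sum since the coefficients lie in the commutative field $L$. No gaps.
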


\begin{proof}
By Lemma \ref{B^kdB^-k}, we have
\begin{eqnarray*}
\cB^k d\cB^{-k}\cB^{\ell} d\cB^{-\ell}&=& \sum_{0\leq i,j\leq p-1}{\sigma^{-km}(a_i) e_{\rho^i}\sigma^{-\ell m}(a_j) e_{\rho^j}}.
\end{eqnarray*}
By Lemma \ref{powers of rho}, this is equal to
\[
\sum_{0\leq i,j\leq p-1}{\sigma^{-km}(a_i) \sigma^{-\ell m}(a_j) e_{\rho^{i+j}}}%&=&\sum_{0\leq i,j\leq p-1}{\sigma^{-\ell m}(a_j) \sigma^{-km}(a_i) e_{\rho^{i+j}}}\\
%&=& \sum_{0\leq i,j\leq p-1}{\sigma^{-\ell m}(a_j)e_{\rho^j}\sigma^{-km}(a_i)  e_{\rho^i}}\\
=\cB^{\ell} d\cB^{-\ell}\cB^k d\cB^{-k}.
\]
\end{proof}

\begin{proposition}
\label{coeffs of P in K}
Let $P(X)=\prod_{k=0}^{p-1}{(X-\cB^kd\cB^{-k})}$. Then the coefficients of $P$ lie in $K$.
\end{proposition}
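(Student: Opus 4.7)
The plan is to compute the power sums $\sum_{k=0}^{p-1}r_k^n$ where $r_k=\cB^k d\cB^{-k}$, show they all lie in $K$, and then invoke Newton's identities (valid because $\Char K\neq p$) to conclude that the elementary symmetric polynomials in $r_0,\ldots,r_{p-1}$, which are the coefficients of $P(X)$ up to sign, also lie in $K$.

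First I would observe that each $a_i=\sum_{j=0}^{p-1}\zeta^{ij}\sigma^j(\alpha)$ is fixed by $\rho$. Indeed, $\rho\in(N_f\cap H_{\Mm^{\vee}})/N\subset(H_{\Mm}\cap H_{\Mm^{\vee}})/N$ acts trivially on $\mu_p$, so fixes $\zeta$; since $N_f\subset\ker(f_0)$, $\rho$ fixes $K_s^{\ker(f_0)}=K(\alpha)$; and $\rho$ commutes with $\sigma$ in $G_{\Kk}/N$, as already observed in the proof of Lemma \ref{conjbysigma}. Hence $a_i\in L^{\rho}$, so $a_i$ commutes with every $e_{\rho^j}$ in $A_{\varphi_0}$. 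Combined with $e_{\rho^i}e_{\rho^j}=e_{\rho^{i+j}}$ (which follows from $\varphi_0(\rho^i,\rho^j)=1$; cf.\ Lemma \ref{powers of rho}), the $r_k$'s all lie in a single commutative subring of $A_{\varphi_0}$, so the elementary symmetric polynomials in them are well-defined and equal, up to sign, to the coefficients of $P(X)$.

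Next I would expand in this commutative subring and sum:
\[
\sum_{k=0}^{p-1}r_k^n=\sum_{i_1,\ldots,i_n=0}^{p-1}a_{i_1}\cdots a_{i_n}\,e_{\rho^{i_1+\cdots+i_n}}\sum_{k=0}^{p-1}\zeta^{km(i_1+\cdots+i_n)}.
\]
The innermost sum vanishes unless $p\mid i_1+\cdots+i_n$ (using $\gcd(m,p)=1$), in which case $e_{\rho^{i_1+\cdots+i_n}}=1$ and it equals $p$. Introducing $A(Y)=\sum_{i=0}^{p-1}a_iY^i\in L[Y]$ and using the orthogonality $\sum_{\xi\in\mu_p}\xi^s\in\{0,p\}$, one rewrites
\[
\sum_{k=0}^{p-1}r_k^n=\sum_{\xi\in\mu_p}A(\xi)^n.
\]
Substituting the definition of $a_i$ and swapping the order of summation gives
\[
A(\xi)=\sum_{j=0}^{p-1}\sigma^j(\alpha)\sum_{i=0}^{p-1}(\zeta^j\xi)^i,
\]
whose inner sum is $p$ when $\xi=\zeta^{-j}$ and $0$ otherwise; hence $A(\zeta^{-j})=p\sigma^j(\alpha)$. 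Since $f_0(\sigma)$ generates $M$, the cosets $\sigma^j\ker(f_0)$ are pairwise distinct for $0\le j\le p-1$, so the $\sigma^j(\alpha)$ run through the $p$ Galois conjugates of $\alpha$. Therefore
\[
\sum_{k=0}^{p-1}r_k^n=\sum_{j=0}^{p-1}\bigl(p\sigma^j(\alpha)\bigr)^n=p^n\Tr_{K(\alpha)/K}(\alpha^n)\in K.
\]

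Finally, Newton's identities express each elementary symmetric polynomial in $r_0,\ldots,r_{p-1}$ as a $\bbQ$-polynomial in these power sums, with denominators dividing $p!$; since $\Char K\neq p$ all such denominators are invertible in $K$, so every coefficient of $P(X)$ lies in $K$, as required. The main technical hurdle is the discrete-Fourier manipulation giving $A(\zeta^{-j})=p\sigma^j(\alpha)$; it rests on the orthogonality identity for $p$th roots of unity but requires careful index bookkeeping through the double sum.
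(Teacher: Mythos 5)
Your power-sum computation is correct and, in itself, a nice piece of work: the commutativity of the roots $r_k=\cB^kd\cB^{-k}$ is exactly the paper's Corollary \ref{roots commute}, your evaluation $A(\zeta^{-j})=p\sigma^j(\alpha)$ is sound, and since $f_0(\sigma)$ generates $M$ the elements $\sigma^0,\ldots,\sigma^{p-1}$ do represent all $p$ cosets of $\ker(f_0)$ in $G_{\Kk}$, so $\sum_{k}r_k^n=p^n\Tr_{K(\alpha)/K}(\alpha^n)\in K$ for every $n$. In effect you have shown that the roots of $P$ have the same power sums as the roots $p\sigma^j(\alpha)$ of $Q$.

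The final step, however, is a genuine gap. Newton's identities recover $e_k$ from the power sums only after dividing by $k$, so your argument needs every integer $1,\ldots,p$ to be invertible in $K$, i.e.\ $\Char K=0$ or $\Char K>p$; the hypothesis $\Char K\neq p$ only makes $p$ itself invertible. The paper allows, for instance, $p=3$ with $\Char K=2$, and in characteristic $2$ the general implication ``all power sums lie in $K$ $\Rightarrow$ all elementary symmetric functions lie in $K$'' is false (for the pair $x,x$ every power sum vanishes while $e_2=x^2$ is arbitrary), so no amount of index bookkeeping can rescue the deduction as stated; for the same reason you cannot conclude $P=Q$ from equality of power sums in small positive characteristic. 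The paper's route is characteristic-free: it shows the coefficients of $P$ lie in the centre $K$ of $A_{\varphi_0}$ by checking that they commute with every $x\in L$ and with every $e_t$ --- conjugation by $\cB$, by elements of $L^{\langle\rho\rangle}$, and by $e_t$ for $t\in\Gal(L/K_s^{\ker(g_0)})$ permutes the roots $\cB^kd\cB^{-k}$, while conjugation by $e_{\rho}$ fixes each root (Lemmas \ref{coeffs commute with L}--\ref{coeffs commute with e_s}). Your argument is valid when $\Char K=0$ or $\Char K>p$, but to obtain the proposition in the stated generality you need an argument of this second kind, or at least a separate treatment of the characteristics $0<\Char K<p$.
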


Since $K$ is the centre of $A_{\varphi_0}$, it suffices to show that the coefficients of $P$ commute with every element of $A_{\varphi_0}$. As a $K$-algebra, $A_{\varphi_0}$ is generated by the elements of $L$ and $\{e_s\}_{s\in\Gal(L/K)}$. We prove Proposition \ref{coeffs of P in K} in three steps.

\begin{lemma}
\label{coeffs commute with L}
The coefficients of $P$ commute with $x$ for every $x\in L$.
\end{lemma}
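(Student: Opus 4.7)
The plan is to prove the stronger statement that each coefficient of $P(X)$ actually lies in $L\subset B\subset A_{\varphi_0}$; since $L$ is commutative, commutation with every $x\in L$ then follows at once. My one tool will be a single inner automorphism of $A_{\varphi_0}$, conjugation by $\cB$, which I will show cyclically permutes the roots $r_k:=\cB^kd\cB^{-k}$ of $P$.

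First I would record that $\rho(\cB)=\zeta^{-m}\cB$. Since $\rho\in (N_f\cap H_{\Mm^\vee})/N$ acts trivially both on $M$ and on $M^\vee=\Hom(M,\mu_p)$, it acts trivially on $\mu_p$; combined with the formula $\cB=b_m=\sum_j\zeta^{mj}\rho^j(\beta)$, the $\rho$-action just shifts the summation index $j\mapsto j+1$, giving the claim. Iterating yields $\rho^i(\cB^p)=\cB^p$ for every $i$, so $\cB^p$ commutes with each $e_{\rho^i}$ via the rule $e_{\rho^i}\cB^p=\rho^i(\cB^p)e_{\rho^i}$, and hence with $d=\sum_i a_ie_{\rho^i}$. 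In particular $r_p=\cB^pd\cB^{-p}=d=r_0$, and $\cB r_k\cB^{-1}=r_{k+1}$, so conjugation by $\cB$ really is a cyclic permutation of $\{r_0,\ldots,r_{p-1}\}$.

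Next, since the roots commute pairwise by Corollary \ref{roots commute}, the coefficients of $P$ are, up to sign, the elementary symmetric polynomials in the $r_k$, and are therefore fixed by any permutation of the $r_k$. Hence every coefficient $c$ of $P$ satisfies $\cB c\cB^{-1}=c$. Now $c\in B$, so I would write $c=\sum_{j=0}^{p-1}c_je_{\rho^j}$ with $c_j\in L$. Using $e_{\rho^j}\cB^{-1}=\rho^j(\cB^{-1})e_{\rho^j}=\zeta^{mj}\cB^{-1}e_{\rho^j}$ one obtains $\cB e_{\rho^j}\cB^{-1}=\zeta^{mj}e_{\rho^j}$, so the invariance translates into $(\zeta^{mj}-1)c_j=0$ for each $j$. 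Since $\zeta$ is a primitive $p$th root of unity and $1\le m\le p-1$, this forces $c_j=0$ for $j\not\equiv 0\pmod p$, so $c=c_0\in L$, as required.

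The one delicate step is verifying $r_p=r_0$, so that conjugation by $\cB$ actually acts on the $p$-element set $\{r_0,\ldots,r_{p-1}\}$. This reduces to $\cB^p$ commuting with $d$, which is the unique place where the particular choice of $\rho\in (N_f\cap H_{\Mm^\vee})/N$ (acting trivially on $M$, on $M^\vee$, and hence on $\mu_p$) is essential; everything else is bookkeeping with the multiplication rules of $A_{\varphi_0}$.
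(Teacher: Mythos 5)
Your proof is correct, and it takes a genuinely different route from the paper. The paper's argument decomposes $L=L^{\langle\rho\rangle}(\cB)$ and treats the two generating pieces separately: conjugation by $\cB$ permutes the roots $r_k=\cB^k d\cB^{-k}$, while conjugation by any $x\in L^{\langle\rho\rangle}$ fixes each root individually (because $\rho^i(x)=x$ gives $xd x^{-1}=d$), so conjugation by anything in $L$ fixes the coefficients. You instead use only $\cB$-conjugation and then a coordinate computation: having shown $\cB c\cB^{-1}=c$, you expand $c\in B$ on the $L$-basis $\{e_{\rho^j}\}_{0\le j\le p-1}$ and note that $\cB e_{\rho^j}\cB^{-1}=\zeta^{mj}e_{\rho^j}$ kills all coordinates with $j\not\equiv 0\pmod p$, forcing $c\in L$. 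Both routes are valid. Your version is a bit longer but proves the stronger statement $c\in L$ directly and makes the key check $r_p=r_0$ (i.e.\ $\cB^p\in L^{\langle\rho\rangle}$) explicit, which the paper leaves to the reader as a consequence of Lemma~\ref{B^kdB^-k}; the paper's version is shorter but relies on the reader combining the two conjugations via $L=L^{\langle\rho\rangle}(\cB)$.
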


\begin{proof}
We know that $\rho$ generates $(N_f\cap H_{\Mm^{\vee}})/N\cong%$, which is isomorphic to $
H_{M^{\vee}}/N_g\cong M^{\vee}$, which has cardinality $p$. Therefore, $[L:L^{\langle\rho\rangle}]=p$ and $L=L^{\langle\rho\rangle}(x)$ for any $x\in L\setminus L^{\langle\rho\rangle}$. Since $\cB=b_m$ for some $1\leq m\leq p-1$, we have $\rho(\cB)=\zeta^{-m}\cB\neq\cB$. Therefore, $L=L^{\langle\rho\rangle}(\cB)$. Observe that conjugation by $\cB$ permutes the roots of $P$. For any $x\in L^{\langle\rho\rangle}$, we have $xdx^{-1}=d$, since $\rho^i(x)=x$ for such $x$. Hence, conjugation by $x\in L^{\langle\rho\rangle}$ fixes the roots of $P$. Therefore, conjugation by any element of $L$ fixes the coefficients of $P$.
\end{proof}

\begin{lemma}
\label{coeffs commute with Gal(L/L_g^v)}
The coefficients of $P$ commute with $e_t$ for all $t$ in $\Gal(L/K_s^{\ker(g_0)}).$
\end{lemma}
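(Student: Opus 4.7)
The plan is to show that conjugation by $e_t$ permutes the set of roots $\{r_k := \cB^k d \cB^{-k} : 0 \leq k \leq p-1\}$ of $P$. Since the coefficients of $P$ are elementary symmetric functions of these roots, any such permutation automatically fixes them, which would prove the lemma. By Lemma \ref{semidirect}, $\Gal(L/K_s^{\ker(g_0)})$ is generated by $\sigma$ together with $(\ker(f_0)\cap\ker(g_0))/N$, so I would handle these two types of generator separately.

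For $e_\sigma$, the stronger statement that $e_\sigma$ commutes with $\cB$ and $d$ individually (and hence fixes each $r_k$) should hold. Commutation with $d$ is built into the construction of $d$ in Section \ref{generators}. For $\cB$, I would argue that $\sigma$ acts trivially on $\mu_p$ (since it acts trivially on both $M$ and $M^\vee = \Hom(M,\mu_p)$), fixes $\beta$ (as $\sigma \in \ker(g_0)/N$), and commutes with $\rho$ in $\Gal(L/K)$ (as in the proof of Lemma \ref{conjbysigma}). Expanding $\cB = b_m = \sum_j \zeta^{mj}\rho^j(\beta)$ then gives $\sigma(\cB) = \cB$, so $e_\sigma\cB = \cB e_\sigma$.

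For $t \in (\ker(f_0)\cap\ker(g_0))/N$, the plan is to compute $e_t r_k e_t^{-1}$ directly and verify it is again an $r_k$. Letting $k'$ be the integer by which $t$ acts on $M$ and $\ell$ the integer with $t\rho t^{-1} = \rho^\ell$, one starts from $r_k = \sum_i \zeta^{ikm} a_i e_{\rho^i}$ (Lemma \ref{B^kdB^-k}) and applies $e_t(\cdot)e_t^{-1}$ termwise via Lemma \ref{conjbykers} (giving $e_t e_{\rho^i} e_t^{-1} = e_{\rho^{\ell i}}$) and Proposition \ref{a_i} (giving $t(a_i) = a_{\ell i}$ and $t(\zeta) = \zeta^{k'\ell}$). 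After reindexing $i \mapsto \ell i$, the factors of $\ell$ should cancel, leaving
\[
e_t r_k e_t^{-1} = \sum_{i'}\zeta^{i'k'km} a_{i'} e_{\rho^{i'}} = r_{k'k}.
\]
Since $k' \in (\bbZ/p\bbZ)^\times$, the map $k \mapsto k'k$ permutes $\{0,1,\ldots,p-1\}$, so conjugation by $e_t$ indeed permutes the roots.

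The only real obstacle will be the bookkeeping in the second case: one must track three $t$-actions at once and verify that the factors of $\ell$ coming from $t(\zeta)$, $t(a_i)$, and $e_t e_{\rho^i} e_t^{-1}$ cancel correctly. Since all the necessary ingredients are supplied by earlier lemmas, no new ideas should be needed.
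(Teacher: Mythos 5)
Your proposal is correct and uses the paper's key mechanism: conjugation by $e_t$ permutes the commuting roots $\cB^k d\cB^{-k}$ of $P$, hence fixes its (symmetric) coefficients. The only difference is organizational: the paper treats a general $t\in\Gal(L/K_s^{\ker(g_0)})$ acting by $k$ on $M$ and $\ell$ on $M^{\vee}$ in one computation, showing $e_t\cB d\cB^{-1}e_t^{-1}=t(\cB)dt(\cB)^{-1}=\cB^k d\cB^{-k}$ via $\rho^i(t(\cB))=\zeta^{-ikm}t(\cB)$, whereas you split via Lemma \ref{semidirect} (as the paper does in Section \ref{generators}) and conjugate the expression of Lemma \ref{B^kdB^-k} termwise; your bookkeeping with $t(\zeta)$, $t(a_i)$ and $e_te_{\rho^i}e_t^{-1}$ does cancel as claimed, and the reduction to generators is legitimate because $\varphi_0(s,t)=1$ for $t\in\ker(g_0)/N$, so $s\mapsto e_s$ is multiplicative on this subgroup.
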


\begin{proof}
By construction, $d$ commutes with $e_t$ for all $t\in \Gal(L/K_s^{\ker(g_0)})$. Suppose $t\in \Gal(L/K_s^{\ker(g_0)})$ is such that $t$ acts as multiplication by $k$ on $M$ and $t$ acts as multiplication by $\ell$ on $M^{\vee}$. Then $t\sigma t^{-1}= \sigma^k$, because $f$ induces an isomorphism of $G_{\Kk}$-modules $H_{\Mm}/N_f\cong M$. Similarly, $t\rho t^{-1}= \rho^{\ell}$. By definition of the action on $M^{\vee}=\Hom(M,\mu_p)$, we have $t(\zeta)=\zeta^{k\ell}$. Therefore, 
\begin{eqnarray*}
e_t\cB e_t^{-1}&=&t(\cB)=\sum_{j=0}^{p-1}{t(\zeta)^{mj}(t\rho t^{-1})^jt(\beta)}=\sum_{j=0}^{p-1}{\zeta^{mjk\ell}\rho^{j\ell}t(\beta)}\\
&=&\sum_{j=0}^{p-1}{\zeta^{mjk}\rho^{j}t(\beta)}=\sum_{j=0}^{p-1}{\zeta^{mjk}\rho^{j}(\beta)}
\end{eqnarray*}
because $\beta\in K_s^{\ker(g_0)}$ and $t\in \Gal(L/K_s^{\ker(g_0)})$. Hence,
\begin{eqnarray*}
e_{t}\cB d\cB^{-1}e_t^{-1}&=&t(\cB)d t(\cB)^{-1}=t(\cB) \sum_{i=0}^{p-1}{a_i e_{\rho^i}}t(\cB)^{-1}
=\sum_{i=0}^{p-1}{t(\cB)a_i \rho^i(t(\cB))^{-1}e_{\rho^i}}\\
&=&\sum_{i=0}^{p-1}{t(\cB)(\zeta^{-ikm}t(\cB))^{-1}a_i  e_{\rho^i}}=\sum_{i=0}^{p-1}{\zeta^{ikm} a_i e_{\rho^i}}=\cB^k d\cB^{-k}
\end{eqnarray*}
by Lemma \ref{B^kdB^-k}. Thus, we see that conjugation by $e_t$ for $t\in\Gal(L/K_s^{\ker(g_0)})$ permutes the roots of $P$. Consequently, the coefficients of $P$ commute with $e_t$ for all $t\in\Gal(L/K_s^{\ker(g_0)})$.
\end{proof}

\begin{lemma}
\label{coeffs commute with e_s}
The coefficients of $P$ commute with $e_t$ for every $t$ in $\Gal(L/K).$
\end{lemma}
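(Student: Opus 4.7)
The plan is to reduce to showing that the coefficients of $P$ commute with $e_\rho$ alone, and then to establish this by proving that conjugation by $e_\rho$ fixes every root of $P$ individually. For the reduction: every $s\in\Gal(L/K)$ can be written as $\rho^i t$ with $t\in\Gal(L/K_s^{\ker(g_0)})$. Because $f_0(\rho)=0$, \eqref{phi} gives $\varphi_0(\rho^a,\rho^b)=1$ for all $a,b$, so $e_{\rho^i}=e_\rho^i$ in $A_{\varphi_0}$. Moreover $e_s=\varphi_0(\rho^i,t)^{-1}e_{\rho^i}e_t$ with $\varphi_0(\rho^i,t)\in\mu_p\subset L$. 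Combined with Lemma \ref{coeffs commute with L} and Lemma \ref{coeffs commute with Gal(L/L_g^v)}, it suffices to show that the coefficients of $P$ commute with $e_\rho$.

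I will then show $e_\rho d e_\rho^{-1}=d$. Using $\varphi_0(\rho,\rho^i)=\varphi_0(\rho^{i+1},\rho^{-1})=1$, one computes $e_\rho e_{\rho^i}e_\rho^{-1}=e_{\rho^i}$, so $e_\rho d e_\rho^{-1}=\sum_i\rho(a_i)\,e_{\rho^i}$. Three facts combine to give $\rho(a_i)=a_i$: first, $\rho$ fixes $\alpha$ because $\rho\in N_f\subseteq\ker(f_0)$ and $K_s^{\ker(f_0)}=K(\alpha)$; second, $\rho$ commutes with $\sigma$ in $G_{\Kk}/N$ by the commutator argument already used in Lemma \ref{conjbysigma} (their commutator lies in the trivial intersection of $(N_g\cap H_{\Mm})/N$ and $(N_f\cap H_{\Mm^{\vee}})/N$); and third, $\rho$ fixes $\zeta\in\mu_p$ because $\rho\in N_f\cap H_{\Mm^{\vee}}\subseteq H_{\Mm}\cap H_{\Mm^{\vee}}$ acts trivially on both $M$ and $M^{\vee}$, hence on $\mu_p$.

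For the action on $\cB$, a direct computation analogous to those in Lemma \ref{coeffs commute with Gal(L/L_g^v)} gives $e_\rho\cB e_\rho^{-1}=\rho(\cB)=\zeta^{-m}\cB$, so $e_\rho\cB^k e_\rho^{-1}=\zeta^{-km}\cB^k$. Combining with $e_\rho d e_\rho^{-1}=d$,
\[e_\rho\,\cB^k d\cB^{-k}\,e_\rho^{-1}=(\zeta^{-km}\cB^k)\,d\,(\zeta^{km}\cB^{-k}).\]
Since $\rho^i(\zeta)=\zeta$ for all $i$, the scalar $\zeta^{km}\in\mu_p$ commutes with every $e_{\rho^i}$ and therefore with $d$, so the two factors $\zeta^{\pm km}$ cancel and the right-hand side equals $\cB^k d\cB^{-k}$. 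Hence $e_\rho$ fixes each root of $P$, so it commutes with every symmetric function of the roots, in particular with each coefficient of $P$. Together with Lemmas \ref{coeffs commute with L} and \ref{coeffs commute with Gal(L/L_g^v)} and the reduction in the first paragraph, this completes the proof.

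The main obstacle is pinning down that $\rho$ simultaneously fixes $\alpha$, commutes with $\sigma$, and acts trivially on $\mu_p$; once these three facts are verified using the inclusions $N_f\subseteq\ker(f_0)\cap H_\Mm$ and $\rho\in N_f\cap H_{\Mm^{\vee}}$, the rest is a direct cocycle calculation inside $A_{\varphi_0}$.
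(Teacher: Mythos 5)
Your proof is correct and follows essentially the same route as the paper: reduce to showing the coefficients of $P$ commute with $e_\rho$ (using $f_0(\rho)=0$ so $e_{\rho^i}=e_\rho^i$), then verify that conjugation by $e_\rho$ fixes each root $\cB^k d\cB^{-k}$. The only cosmetic difference is that you split the conjugation into $e_\rho d e_\rho^{-1}=d$ and $e_\rho\cB^k e_\rho^{-1}=\zeta^{-km}\cB^k$ with a cancelling scalar, while the paper performs the computation in one stroke using Lemmas \ref{B^kdB^-k} and \ref{powers of rho}; both hinge on the same facts that $\rho$ fixes $\alpha$, $\zeta$, and commutes with $\sigma$.
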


\begin{proof}
By Lemma \ref{coeffs commute with Gal(L/L_g^v)}, the coefficients of $P$ commute with $e_t$ for all $t\in\Gal(L/K_s^{\ker(g_0)})$. Thus, it suffices to prove that the coefficients of $P$ commute with $e_t$ for all $t$ in some set $R$ of left coset representatives for $\Gal(L/K_s^{\ker(g_0)})$ in $\Gal(L/K)$. By Lemma \ref{R gen by rho}, $R$ can be taken to be $\{\rho^i \}_{0\leq i\leq p-1}$. Since $f(\rho)=0$, \eqref{phi} gives $\varphi(\rho^i,t)=1$ for all $t\in G_{\Kk}/N$ and all $i\in\bbZ$. Hence, $e_{\rho^i}=e_{\rho}^i$ and it suffices to show that the coefficients of $P$ commute with $e_{\rho}$. By Lemma \ref{B^kdB^-k},
\begin{equation}
\label{eq:conjbyrho}
e_{\rho}\cB^kd\cB^{-k}e_{\rho}^{-1}=e_{\rho}\sum_{i=0}^{p-1}{\zeta^{ikm}a_i e_{\rho^i}}e_{\rho}^{-1}= \sum_{i=0}^{p-1}{\zeta^{ikm}e_{\rho}a_i e_{\rho^i}e_{\rho}^{-1}}
\end{equation}
because $\zeta$ is fixed by $\rho\in (N_f\cap H_{\Mm^{\vee}})/N$.
By Lemma \ref{powers of rho}, we have 
\begin{equation}
\label{eq:rho}
\sum_{i=0}^{p-1}{\zeta^{ikm}e_{\rho}a_i e_{\rho^i}e_{\rho}^{-1}}=\sum_{i=0}^{p-1}{\zeta^{ikm}a_i e_{\rho^{i+1}}e_{\rho}^{-1}}.
\end{equation}
We have $\rho\in (N_f\cap H_{\Mm^{\vee}})/N\subset\ker(f_0)/N$, whereby
for all $j\in\bbZ$ and all $s\in G_{\Kk}/N$ we have
$\varphi_0(\rho^j,s)=(\rho^j\cdot g_0(s))(f_0(\rho^j))=1.$
Hence, $e_{\rho^{-1}}=e_{\rho}^{-1}$ and $$e_{\rho^{i+1}}e_{\rho^{-1}}=\varphi_0(\rho^{i+1},\rho^{-1})e_{\rho^i}=e_{\rho^i}.$$
Therefore, equations \eqref{eq:conjbyrho} and \eqref{eq:rho} give $e_{\rho}\cB^kd\cB^{-k}e_{\rho}^{-1}=\cB^kd\cB^{-k}$ for all $0\leq k\leq p-1$. Hence, the coefficients of $P$ commute with $e_{\rho}$.

\end{proof}

Combining Lemma \ref{coeffs commute with L} and Lemma \ref{coeffs commute with e_s}, we see that the coefficients of $P$ lie in the centre of $A_{\varphi_0}$, which is $K$. Thus, we have proved Proposition \ref{coeffs of P in K}.

\begin{definition}
Let $Q(X)$ be the minimal polynomial of $p\alpha$ over $K$,
$Q(X)=\prod_{i=0}^{p-1}{(X-\sigma^i(p\alpha))}$. 
\end{definition}

 We will show that $P=Q$ and thus conclude that $P$ is irreducible and $K(d)\cong K(\alpha)$.

%\begin{P(X)}
%\label{P(X)}
%$P(X)=\prod_{k=0}^{p-1}{(X-\sum_{i=1}^{p-1}{\zeta^{ik}a_i e_{\rho^i}})}.$
%\end{P(X)}

%\begin{proof}
%We have
%\begin{eqnarray*}
%P(X)&=& \prod_{k=0}^{p-1}{(X-\cB^kd\cB^{-k})}= \prod_{k=0}^{p-1}{(X-\sum_{i=1}^{p-1}{a_i\zeta^{ikm}e_{\rho^i}})}\\
%&= &\prod_{k=0}^{p-1}{(X-\sum_{i=1}^{p-1}{a_i\zeta^{ik}e_{\rho^i}})}.\\
%\end{eqnarray*}
%\end{proof}

\begin{definition} 
We define $R(X,Y)=\prod_{k=0}^{p-1}{(X-\sum_{i=0}^{p-1}{\sigma^k(a_i)Y^i})}.$
\end{definition}

\begin{lemma}
We have
$P(X)=R(X,e_{\rho})$ and
$Q(X)=R(X,1)$.
\end{lemma}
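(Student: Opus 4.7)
The plan is to evaluate the two-variable product $R(X,Y)$ at $Y=e_\rho$ and at $Y=1$, and show in each case that the resulting product agrees with the stated one.

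For $P(X)=R(X,e_\rho)$: First I would observe that $e_\rho^i=e_{\rho^i}$. Indeed, since $\rho\in(N_f\cap H_{\Mm^\vee})/N\subset\ker(f_0)/N$, we have $f_0(\rho)=0$, so formula \eqref{phi} gives $\varphi_0(\rho^j,\rho)=1$ for all $j$, and inductively $e_\rho^i=e_{\rho^i}$ (this was already used inside the proof of Lemma \ref{coeffs commute with e_s}). Substituting,
\[R(X,e_\rho)=\prod_{k=0}^{p-1}\Bigl(X-\sum_{i=0}^{p-1}\sigma^k(a_i)e_{\rho^i}\Bigr).\]
By Lemma \ref{B^kdB^-k}, $\cB^kd\cB^{-k}=\sum_{i=0}^{p-1}\sigma^{-km}(a_i)e_{\rho^i}$. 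Since $\cB=b_m$ with $1\leq m\leq p-1$ and $\sigma$ has order $p$, the map $k\mapsto -km\pmod p$ is a bijection of $\{0,1,\ldots,p-1\}$. Hence the factors of $P(X)=\prod_k(X-\cB^kd\cB^{-k})$ are just a reindexing of the factors of $R(X,e_\rho)$, so $P(X)=R(X,e_\rho)$.

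For $Q(X)=R(X,1)$: The key calculation is $\sum_{i=0}^{p-1}a_i=p\alpha$. Writing out $a_i=\sum_{j=0}^{p-1}\zeta^{ij}\sigma^j(\alpha)$ and interchanging the order of summation,
\[\sum_{i=0}^{p-1}a_i=\sum_{j=0}^{p-1}\sigma^j(\alpha)\sum_{i=0}^{p-1}\zeta^{ij}.\]
Since $\zeta$ is a primitive $p$-th root of unity (Lemma \ref{conjbysigma}), the inner sum equals $p$ when $j=0$ and $0$ otherwise, leaving $p\alpha$. Applying $\sigma^k$ yields $\sum_{i=0}^{p-1}\sigma^k(a_i)=\sigma^k(p\alpha)$. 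Consequently,
\[R(X,1)=\prod_{k=0}^{p-1}\bigl(X-\sigma^k(p\alpha)\bigr)=Q(X).\]

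Neither step is really an obstacle; the only subtlety is remembering to justify the identification $e_\rho^i=e_{\rho^i}$ (which fails in general for $e_s$, but holds here because $\rho\in\ker(f_0)/N$) and noting that the factors of $P$ and $R(X,e_\rho)$ match only after a bijective reindexing of $k$, which works because $\gcd(m,p)=1$.
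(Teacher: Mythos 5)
Your proof is correct and takes essentially the same route as the paper: establish $e_{\rho}^i=e_{\rho^i}$ via $\varphi_0(\rho^i,\rho^j)=1$, invoke Lemma \ref{B^kdB^-k}, and compute $\sum_{i=0}^{p-1}a_i=p\alpha$ for the second identity. You are in fact a bit more explicit than the paper about the reindexing $k\mapsto -km\pmod{p}$; the only thing to add is that reordering the factors of the product is legitimate because the elements $\cB^k d\cB^{-k}$ pairwise commute (Corollary \ref{roots commute}).
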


\begin{proof}
Since $\rho\in (N_f\cap H_{\Mm^{\vee}})/N$, we have $f_0(\rho)=0$ and 
$\varphi_0(\rho^i,\rho^j)=1\quad \forall i,j\in \bbZ.$
Therefore, $e_{\rho}^i=e_{\rho^i}\quad \forall i\in \bbZ$. Thus, the equality $P(X)=R(X,e_{\rho})$ follows from Lemma \ref{B^kdB^-k}. Regarding the second claim, we have
\begin{equation*}
R(X,1)=\prod_{k=0}^{p-1}{(X-\sum_{i=0}^{p-1}{\sigma^k(a_i)})}=\prod_{k=0}^{p-1}{(X-\sigma^k\Bigl(\sum_{i=0}^{p-1}{a_i}\Bigr))}.
\end{equation*}
Observe that
\begin{equation*}
\sum_{i=0}^{p-1}{a_i}=\sum_{i=0}^{p-1}{\sum_{j=0}^{p-1}{\zeta^{ij}\sigma^j(\alpha)}}
=\sum_{j=0}^{p-1}{\sigma^j(\alpha)\sum_{i=0}^{p-1}{\zeta^{ij}}}
=p\alpha
\end{equation*}
because $\sum_{i=0}^{p-1}{\zeta^{ij}}=0$ unless $j=0$. This completes the proof that $R(X,1)=Q(X)$.
\end{proof}

\begin{proposition}
\label{P=Q}
We have $P(X)=Q(X).$
\end{proposition}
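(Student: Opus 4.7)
The plan is to exploit the bivariate polynomial $R(X,Y) \in L[X,Y]$ established in the previous lemma. Since $P(X)=R(X,e_{\rho})$ and $Q(X)=R(X,1)$, it suffices to show that these two substitutions produce the same polynomial in $X$. Writing $R(X,Y)=\sum_{j=0}^{p}c_{j}(Y)X^{j}$ with $c_{j}(Y)\in L[Y]$ (each $c_{j}$ being, up to sign, an elementary symmetric polynomial in the $p$ expressions $\sum_{i=0}^{p-1}\sigma^{k}(a_{i})Y^{i}$), the problem reduces to showing $c_{j}(e_{\rho})=c_{j}(1)$ for every $j$.

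The key observation is that $e_{\rho}^{p}=1$ in $A_{\varphi_{0}}$. Indeed, $\rho$ has order $p$ in $G_{\Kk}/N$, and because $f_{0}(\rho)=0$ we have $\varphi_{0}(\rho^{i},\rho^{j})=1$ for all $i,j\in\bbZ$, so $e_{\rho}^{i}=e_{\rho^{i}}$ and hence $e_{\rho}^{p}=e_{1}=1$. Therefore, if I reduce each $c_{j}(Y)$ modulo $Y^{p}-1$ to obtain $\tilde{c}_{j}(Y)=\sum_{i=0}^{p-1}\tilde{c}_{j,i}Y^{i}$ with $\tilde{c}_{j,i}\in L$, both substitutions only see the reduced polynomial: the substitution $Y=e_{\rho}$ because $e_{\rho}^{p}=1$, and $Y=1$ because $Y^{p}-1$ vanishes at $Y=1$.

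Now I invoke Proposition \ref{coeffs of P in K}, which guarantees $c_{j}(e_{\rho})\in K$. Since $\{e_{\rho}^{i}\}_{0\leq i\leq p-1}$ is a left $L$-basis of $B$ and elements of $K\subset L$ are written in this basis as $k=k\cdot e_{\rho}^{0}$, the identity
\[
\sum_{i=0}^{p-1}\tilde{c}_{j,i}\,e_{\rho}^{i}=c_{j}(e_{\rho})\in K
\]
together with $L$-linear independence forces $\tilde{c}_{j,i}=0$ for every $i\geq 1$ and $\tilde{c}_{j,0}=c_{j}(e_{\rho})\in K$. Substituting $Y=1$ then yields $c_{j}(1)=\tilde{c}_{j}(1)=\tilde{c}_{j,0}=c_{j}(e_{\rho})$, and summing over $j$ gives $P(X)=Q(X)$.

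The only conceptual step is recognising that reduction modulo $Y^{p}-1$ is the right manoeuvre; once this is done, the argument is a formal consequence of Proposition \ref{coeffs of P in K} and the $L$-linear independence of the basis $\{e_{\rho}^{i}\}$. There is no serious obstacle, and no further calculation with the explicit $a_{i}$'s is required.
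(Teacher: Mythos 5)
Your argument is correct and is essentially the paper's own proof in different clothing: reducing the coefficients $c_j(Y)$ modulo $Y^p-1$ is exactly the paper's grouping of the $Y$-exponents by residue class mod $p$, and the conclusion that only the $e_{\rho}^{0}$-component survives follows, as in the paper, from Proposition \ref{coeffs of P in K} combined with the left $L$-linear independence of $\{e_{\rho^i}\}$. No substantive difference or gap.
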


\begin{proof}
Write $R(X,Y)=\sum_{i=0}^{p-1}{\sum_{j=0}^{N}{c_{ij}X^iY^j}}$, where $N=(p-1)^2$ and $c_{ij}\in L$. Then $$R(X,Y)=\sum_{i=0}^{p-1}{X^i\sum_{k=0}^{p-1}{\sum_{\ \ \ \ \ j\equiv k\pmod{p}}{c_{ij}Y^j}}}$$ where the innermost sum runs over $0\leq j\leq N.$ Therefore,
\begin{eqnarray*}
P(X)= R(X, e_{\rho}) = \sum_{i=0}^{p-1}{X^i\sum_{k=0}^{p-1}{e_{\rho}^k\sum_{\ \ \ \ \ j\equiv k\pmod{p}}{c_{ij}}}}
\end{eqnarray*}
because $\rho$ has order $p$ in $\Gal(L/K)$, so $e_{\rho}^p=1$. Hence, the coefficient of $X^i$ is $\sum_{k=0}^{p-1}{e_{\rho}^k\sum_{j\equiv k\pmod{p}}{c_{ij}}}$. By Lemma \ref{coeffs of P in K}, the coefficients of $P$ lie in $K$. Therefore, 
$$\sum_{\ \ \ \ \ j\equiv k\pmod{p}}{c_{ij}}=0,\ \ \ \textrm{unless k=0.}$$
Whereby
\begin{equation}
\label{eq:R}
R(X,Y)=\sum_{i=0}^{p-1}{X^i\sum_{\ \ \ \ \ j\equiv 0\pmod{p}}{c_{ij}Y^j}}.
\end{equation}
Since $e_{\rho}^p=1$, \eqref{eq:R} gives $P(X)= R(X,e_{\rho})=R(X, 1)=Q(X).$
\end{proof}
\begin{corollary}
$P$ is the minimal polynomial of $d$ over $K$ and $K(d)\cong K(\alpha)=K_s^{\ker(f_0)}$.
\end{corollary}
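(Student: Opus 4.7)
The plan is short because Proposition \ref{P=Q} has already done the real work. First I would observe that $d$ is manifestly a root of $P(X)=\prod_{k=0}^{p-1}(X-\cB^k d\cB^{-k})$, namely the root corresponding to $k=0$. So it remains only to identify $P$ as the minimal polynomial of $d$ over $K$, which amounts to showing that $P$ is irreducible.

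For irreducibility I would invoke Proposition \ref{P=Q}, which tells us $P(X)=Q(X)$, where $Q$ is the minimal polynomial of $p\alpha$ over $K$. Since $K_s^{\ker(f_0)}=K(\alpha)$ is a degree $p$ extension of $K$ and $\Char(K)\neq p$, multiplication by $p$ is invertible on $K(\alpha)$, so $K(p\alpha)=K(\alpha)$ and hence $Q$ is irreducible of degree $p$. Therefore $P$ is irreducible of degree $p$, and since $P(d)=0$, it must be the minimal polynomial of $d$ over $K$.

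The isomorphism $K(d)\cong K(\alpha)=K_s^{\ker(f_0)}$ then follows immediately: both fields are isomorphic to $K[X]/(P(X))=K[X]/(Q(X))$ via the evaluation maps $X\mapsto d$ and $X\mapsto p\alpha$ respectively. There is no substantive obstacle here; all the content lies in Proposition \ref{P=Q}.
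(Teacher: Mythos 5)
Your proposal is correct and follows the same route as the paper: both rely entirely on Proposition \ref{P=Q}, note that $Q$ is irreducible as the minimal polynomial of $p\alpha$, observe $P(d)=0$, and use the invertibility of $p$ (since $\Char(K)\neq p$) to get $K(d)\cong K(p\alpha)=K(\alpha)=K_s^{\ker(f_0)}$. No substantive differences.
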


\begin{proof} Proposition \ref{P=Q} shows that $d$ and $p\alpha$ are roots of the same polynomial over $K$. This polynomial is irreducible because it is the minimal polynomial of $p\alpha$. The characteristic of $K$ is not $p$, so $p$ is invertible and $K(d)\cong K(p\alpha)=K(\alpha)$.
\end{proof}

\section{The multiplication rule}
\label{multiplication}
Recall that $\alpha\in K_s$ is such that $K_s^{\ker(f_0)}=K(\alpha)$ and $\Tr_{K_s^{\ker(f_0)}/K}(\alpha)=0$. Similarly, $\beta\in K_s$ is such that $K_s^{\ker(g_0)}=K(\beta)$ and $\Tr_{K_s^{\ker(g_0)}/K}(\beta)=0$.

\begin{definition}
Let $$z=p^{-1}d=p^{-1}\sum_{i=0}^{p-1}{a_i e_{\rho^i}},$$
where $a_i=\sum_{j=0}^{p-1}{\zeta^{ij}\sigma^{j}(\alpha)}$. Thus, by Proposition \ref{P=Q}, the minimal polynomial of $z$ over $K$ is the same as that of $\alpha$.  
\end{definition}

Lemma \ref{generators for D} along with the work done in Section \ref{generators} tells us that $\End_{A_{\varphi_0}}(\cS)^{\textrm{opp}}$ is generated as a $K$-algebra by $\beta$ and $z$. The elements $\beta^iz^j$ for $0\leq i,j\leq p-1$ form a basis for $\End_{A_{\varphi_0}}(\cS)^{\textrm{opp}}$ as a $K$-vector space. To specify the multiplication on $\End_{A_{\varphi_0}}(\cS)^{\textrm{opp}}$, it is enough to specify structure constants $c_{ij}\in K$ such that 
$$z\beta=\sum_{0\leq i,j\leq p-1}{c_{ij}\beta^iz^j}.$$

\begin{lemma}
\label{z^j}
For all $0\leq j\leq p-1$, we have
$$z^j=p^{-1}\sum_{k=0}^{p-1}{h_{jk}e_{\rho^k}}$$
where $h_{jk}=\sum_{\ell=0}^{p-1}{\zeta^{k\ell}\sigma^{\ell}(\alpha^j)}\in L^{\langle\rho\rangle}=K_s^{N_f\cap H_{\Mm^{\vee}}}$.
\end{lemma}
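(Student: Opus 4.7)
The approach is induction on $j$, carrying along the subsidiary claim that $h_{jk}\in L^{\langle\rho\rangle}$. The case $j=0$ reduces to the orthogonality relation $\sum_{\ell=0}^{p-1}\zeta^{k\ell}=p\delta_{k,0}$, which gives $h_{00}=p$ and $h_{0k}=0$ for $k\neq 0$, so the right-hand side collapses to $e_1=1=z^0$. The case $j=1$ is immediate from the definition of $z$ together with the observation that $h_{1k}=a_k$.

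Before running the inductive step I would verify that $\rho$ fixes $h_{jk}=\sum_\ell\zeta^{k\ell}\sigma^\ell(\alpha^j)$. Since $\rho\in(N_f\cap H_{\Mm^{\vee}})/N$ acts trivially on both $M$ and $M^{\vee}=\Hom(M,\mu_p)$, the defining formula $(\rho\cdot\varphi)(m)=\rho\cdot\varphi(\rho^{-1}\cdot m)$ forces $\rho$ to fix every value $\varphi(m)\in\mu_p$, so in particular $\rho(\zeta)=\zeta$. Furthermore $\rho\in N_f/N\subset\ker(f_0)/N$, so $\rho$ fixes $\alpha\in K_s^{\ker(f_0)}$ and hence $\alpha^j$. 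Finally, $\rho$ commutes with $\sigma$ in $\Gal(L/K)$, as already noted in the proof of Lemma \ref{conjbysigma}. Together these give $\rho(h_{jk})=h_{jk}$, and the identification $L^{\langle\rho\rangle}=K_s^{N_f\cap H_{\Mm^{\vee}}}$ follows since $\rho$ generates $(N_f\cap H_{\Mm^{\vee}})/N$.

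For the inductive step I would compute $z^{j+1}=z\cdot z^j$ directly. Because $h_{jk}\in L^{\langle\rho\rangle}$, the relation $e_{\rho^i}h_{jk}=\rho^i(h_{jk})e_{\rho^i}=h_{jk}e_{\rho^i}$ holds, and because $f_0(\rho)=0$, the formula \eqref{phi} gives $\varphi_0(\rho^i,\rho^k)=1$, so $e_{\rho^i}e_{\rho^k}=e_{\rho^{i+k}}$. Grouping by $m\equiv i+k\pmod{p}$, the product becomes $p^{-2}\sum_m\bigl(\sum_{i+k\equiv m}a_i h_{jk}\bigr)e_{\rho^m}$. Expanding $a_i$ and $h_{jk}$ as their defining Fourier sums in $\sigma$ and substituting $k=m-i$, the orthogonality relation $\sum_{i=0}^{p-1}\zeta^{i(\ell_1-\ell_2)}=p\delta_{\ell_1,\ell_2}$ collapses the inner double sum to $p\sum_\ell\zeta^{m\ell}\sigma^\ell(\alpha)\sigma^\ell(\alpha^j)=p\sum_\ell\zeta^{m\ell}\sigma^\ell(\alpha^{j+1})=p\cdot h_{j+1,m}$. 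This yields $z^{j+1}=p^{-1}\sum_m h_{j+1,m}e_{\rho^m}$, and since the argument for $\rho$-invariance applies equally to $h_{j+1,m}$, the induction closes. The only real bookkeeping lies in this final orthogonality collapse, but it is mechanical once the Fourier expansions of $a_i$ and $h_{jk}$ are written out, so no genuine obstacle arises.
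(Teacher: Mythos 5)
Your proof is correct and follows essentially the same route as the paper: the same base cases $j=0,1$, induction on $j$ using $\rho$-invariance of the coefficients and $e_{\rho^i}e_{\rho^k}=e_{\rho^{i+k}}$, and the same orthogonality collapse $\sum_i\zeta^{i(\ell_1-\ell_2)}=p\delta_{\ell_1,\ell_2}$ giving $p\,h_{(j+1)m}$. The only cosmetic difference is that you compute $z\cdot z^{j}$ rather than $z^{j}\cdot z$, which changes nothing since all coefficients lie in $L^{\langle\rho\rangle}$.
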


\begin{proof}
It is easily seen that $h_{0k}= 0$ for all $1\leq k\leq p-1$ and $h_{00}=p$. Thus, the statement holds for $j=0$. The statement for $j=1$ follows immediately from the definition of $z$, upon observing that $h_{1k}=a_k$ for all $0\leq k\leq p-1$. We proceed by induction on $j$. Suppose that 
$$z^m=p^{-1}\sum_{k=0}^{p-1}{h_{mk}e_{\rho^k}}$$
for some $0\leq m\leq p-2$. Then,
\begin{eqnarray*}
z^{m+1}&=& z^m z
= \Bigl(p^{-1}\sum_{k=0}^{p-1}{h_{mk}e_{\rho^k}}\Bigr)\Bigl(p^{-1}\sum_{i=0}^{p-1}{a_i e_{\rho^i}}\Bigr)\\
&=&p^{-2}\sum_{i,k=0}^{p-1}{h_{mk} a_i e_{\rho^{k+i}}} \quad\quad\textrm{by Lemma \ref{powers of rho}}\\
&=&p^{-2}\sum_{n,k=0}^{p-1}{h_{mk} a_{n-k} e_{\rho^{n}}}.
\end{eqnarray*}

%\begin{eqnarray*}
%\rho(a_i)&=& \sum_{j=0}^{p-1}{\rho(\zeta)^{ij}\rho\sigma^{j}(\alpha)}\\
%&=&\sum_{j=0}^{p-1}{\rho(\zeta)^{ij}\sigma^{j}\rho(\alpha)}\ \quad\quad\textrm{since $\sigma$ and $\rho$ commute, by Lemma \ref{sigma and rho commute}}\\
%&=&\sum_{j=0}^{p-1}{\rho(\zeta)^{ij}\sigma^{j}(\alpha)}\ \quad\quad\textrm{since $\rho\in N_f$}\\
%&=& \sum_{j=0}^{p-1}{\zeta^{ij}\sigma^{j}(\alpha)}\ \quad\quad\textrm{since $\rho\in (H_{\Mm^{\vee}}\cap N_f)\subset (H_{\Mm^{\vee}}\cap H_{\Mm})$}\\
%&=& a_i.\\
%\end{eqnarray*}

%\medskip

%Returning to our proof by induction, we have\begin{eqnarray*}z^{m+1}&=&p^{-2}\sum_{i,k=0}^{p-1}{h_{mk} \rho^k(a_i)\varphi_0(\rho^k,\rho^i)e_{\rho^{k+i}}}\\
%&=&p^{-2}\sum_{i,k=0}^{p-1}{h_{mk}a_i e_{\rho^{k+i}}}\ \quad\quad\textrm{by the claim above}\\
%&=&p^{-2}\sum_{n,k=0}^{p-1}{h_{mk}a_{n-k}e_{\rho^{n}}}.\\
%\end{eqnarray*}
Hence, it suffices to prove that 
$$\sum_{k=0}^{p-1}{h_{mk}a_{n-k}}=ph_{(m+1)n}.$$
We have
\begin{equation*}
\sum_{k=0}^{p-1}{h_{mk}a_{n-k}}=\sum_{k,\ell,j=0}^{p-1}{\zeta^{k\ell}\sigma^{\ell}(\alpha^m)\zeta^{(n-k)j}\sigma^j(\alpha)}=\sum_{\ell,j=0}^{p-1}{\zeta^{nj}\sigma^{\ell}(\alpha^m)\sigma^j(\alpha)\sum_{k=0}^{p-1}{\zeta^{k(\ell-j)}}}.
\end{equation*}
Now observe that $\sum_{k=0}^{p-1}{\zeta^{k(\ell-j)}}$ equals zero when $\ell\neq j$, and equals $p$ when $\ell=j$. This concludes the proof.
\end{proof}

We want to find structure constants $c_{ij}\in K$ for $0\leq i,j\leq p-1$ such that 
\begin{equation}
\label{eq:structureconstants}
z\beta=\sum_{0\leq i,j\leq p-1}{c_{ij}\beta^i z^j}.
\end{equation}
By the definition of $z$,
\begin{equation}
\label{eq:zbeta}
z\beta = p^{-1}\sum_{i=0}^{p-1}{a_i e_{\rho^i}}\beta= p^{-1}\sum_{i=0}^{p-1}{a_i\rho^i(\beta) e_{\rho^i}}.
\end{equation}
Using Lemma \ref{z^j}, we expand the right-hand side of \eqref{eq:structureconstants} as
\begin{equation}
\label{eq:c_ij expression}
\sum_{i,j=0}^{p-1}{c_{ij}\beta^i z^j}= p^{-1}\sum_{i,j=0}^{p-1}{c_{ij}\beta^i \sum_{k=0}^{p-1}{h_{jk}e_{\rho^k}}}.
\end{equation} 
Hence, equating \eqref{eq:zbeta} and \eqref{eq:c_ij expression}, we obtain for every $0\leq k\leq p-1$
\begin{equation}
\label{eq:strcon2}
a_{k}\rho^{k}(\beta)=\sum_{i,j=0}^{p-1}{c_{ij}\beta^i {h_{jk}}}.
\end{equation}
Recall that $\ker(g_0)\cap H_{\Mm^{\vee}}=N_g\triangleleft G_{\Kk}$. So $K_s^{N_g}=K_s^{H_{\Mm^{\vee}}}(\beta)$ is Galois over $K$ and we write 
\begin{equation}
\label{eq:rhok}
\rho^{k}(\beta)=\sum_{i=0}^{p-1}{m_{i k}\beta^{i}}
\end{equation}
for $m_{i k}\in K_s^{H_{\Mm^{\vee}}}\subset L^{\langle\rho\rangle}$. We know that $L/L^{\langle\rho\rangle}$ has degree $p$ and is generated by $\beta$. Thus, the elements $1,\beta,\dots \beta^{p-1}$ form a basis for $L$ as a vector space over $L^{\langle\rho\rangle}$. Therefore, combining \eqref{eq:strcon2} and \eqref{eq:rhok} gives
\begin{equation}
\label{strcon3}
a_km_{ik}=\sum_{j=0}^{p-1}{c_{ij}h_{jk}}
\end{equation}
for all $0\leq i,k\leq p-1$.

\begin{definition}
\label{matrices}
We define three $p$-by-$p$ matrices $X$, $Y$ and $Z$. 
\begin{equation*}
X=(a_k m_{ik})_{i,k},\quad
Y=(c_{ik})_{i,k},\quad
Z=(h_{ik})_{i,k}.
\end{equation*}
In all three cases, the indices $i$ and $k$ run from $0$ to $p-1$. 
\end{definition} 
In terms of these matrices, \eqref{strcon3} becomes $X=YZ$, where $Y$ is to be found. We know that such a $Y$ exists and is unique because the elements $\beta^iz^j$ for $0\leq i,j\leq p-1$ form a basis for $\End_{A_{\varphi_0}}(\cS)^{\textrm{opp}}$. 

\begin{lemma}
The matrix $Z$ is invertible. Thus, $Y=XZ^{-1}$.
\end{lemma}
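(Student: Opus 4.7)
The plan is to exhibit $Z$ as a product of two Vandermonde matrices, both of which are manifestly invertible.

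Expanding the definition, $h_{ik}=\sum_{\ell=0}^{p-1}(\sigma^{\ell}(\alpha))^i\,(\zeta^k)^{\ell}$, which is a bilinear pairing in the two ``Vandermonde'' shapes one would hope for. Accordingly, introduce the auxiliary $p$-by-$p$ matrices
\[
A=\bigl(\sigma^{\ell}(\alpha^i)\bigr)_{0\leq i,\ell\leq p-1}=\bigl((\sigma^{\ell}(\alpha))^i\bigr)_{i,\ell},\qquad B=(\zeta^{k\ell})_{0\leq \ell,k\leq p-1}=\bigl((\zeta^k)^{\ell}\bigr)_{\ell,k}.
\]
A direct computation gives $(AB)_{ik}=\sum_{\ell}(\sigma^{\ell}(\alpha))^i(\zeta^k)^{\ell}=h_{ik}$, so $Z=AB$.

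Now $A$ is the Vandermonde matrix on the elements $\alpha,\sigma(\alpha),\dots,\sigma^{p-1}(\alpha)$, with determinant $\prod_{0\leq \ell<\ell'\leq p-1}(\sigma^{\ell'}(\alpha)-\sigma^{\ell}(\alpha))$. Since $K(\alpha)=K_s^{\ker(f_0)}/K$ has degree $p$, these $p$ Galois conjugates of $\alpha$ are pairwise distinct, whence $\det(A)\neq 0$. Similarly, $B$ is the Vandermonde matrix on $1,\zeta,\zeta^2,\dots,\zeta^{p-1}$, with determinant $\prod_{0\leq k<k'\leq p-1}(\zeta^{k'}-\zeta^k)\neq 0$ since $\zeta$ is a primitive $p$th root of unity (established in Lemma \ref{conjbysigma}). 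Therefore $Z=AB$ is invertible, and the displayed formula $Y=XZ^{-1}$ follows from the relation $X=YZ$ obtained in \eqref{strcon3}.

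There is no real obstacle here, the only thing to verify carefully is that the factorisation $Z=AB$ matches the indexing conventions chosen in Definition \ref{matrices}; once that is in place, the invertibility is immediate from the two Vandermonde determinants being nonzero.
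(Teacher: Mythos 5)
Your proof is correct, but it takes a genuinely different route from the paper. The paper argues indirectly: if $Z$ were singular there would be a nonzero matrix $T$ with $TZ=0$, and then $Y+T$ would be a second solution of $X=YZ$, contradicting the uniqueness of the structure constants $c_{ij}$ (which comes from $\{\beta^iz^j\}$ being a $K$-basis of $\End_{A_{\varphi_0}}(\cS)^{\textrm{opp}}$). You instead factor $Z$ directly: with $A_{i\ell}=(\sigma^{\ell}(\alpha))^i$ and $B_{\ell k}=\zeta^{\ell k}$ one indeed gets $(AB)_{ik}=\sum_{\ell}(\sigma^{\ell}(\alpha))^i\zeta^{k\ell}=h_{ik}$, so $Z=AB$ is a product of two (transposed) Vandermonde matrices, and the determinants do the work. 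This buys something the paper's argument does not: it is constructive, gives an explicit nonzero formula for $\det Z$, and does not lean on the uniqueness of $Y$ -- only on its existence, which is what is needed to pass from $X=YZ$ to $Y=XZ^{-1}$. One small point you should make explicit: the nonvanishing of $\det A$ requires that $\alpha,\sigma(\alpha),\dots,\sigma^{p-1}(\alpha)$ are pairwise distinct, i.e.\ that no power $\sigma^m$ with $1\leq m\leq p-1$ fixes $\alpha$. This follows because such a $\sigma^m$ would lie in $\ker(f_0)/N$ and also in $(H_{\Mm}\cap N_g)/N$, hence in $(N_f\cap N_g)/N$, which is trivial; since $\sigma$ has order $p$ this is impossible. (The paper makes the same tacit assumption when it declares $Q(X)=\prod_i(X-\sigma^i(p\alpha))$ to be the minimal polynomial of $p\alpha$, so your argument is on the same footing once this sentence is added.) The invertibility of $B$ is immediate from $\zeta$ being a primitive $p$th root of unity, exactly as in Proposition \ref{a_i nonzero}.
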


\begin{proof}
Suppose for contradiction that $Z$ is not invertible. Then $Z$ has a non-trivial kernel and there exists a nonzero matrix $T$ such that $TZ=0$. But then $(Y+T)Z=YZ=X$. This contradicts the fact that $Y$ is unique. 
\end{proof}

Therefore, $\End_{A_{\varphi_0}}(\cS)^{\textrm{opp}}$ has a basis $\{\beta^iz^j\}_{0\leq i,j\leq p-1}$ as a $K$-vector space, where $z$ satisfies the same minimal polynomial over $K$ as $\alpha$, and the multiplication satisfies
$$z\beta=\sum_{i,j=0}^{p-1}{c_{ij}\beta^iz^j}$$
where $(c_{ij})_{i,j}=XZ^{-1}$ for $X$ and $Z$ as defined in Definition \ref{matrices}. Thus, $\cD=\End_{A_{\varphi_0}}(\cS)^{\textrm{opp}}$ is as described in Theorem \ref{mainthm}. 

\section{An example}
We apply Theorem \ref{mainthm} to the case $K_s^{H_M}=K(\mu_p)$. In this case, any $1$-cocycle $f_0$ which represents a non-trivial element $f\in H^1(G_{\Kk},M)$ has $K_s^{\ker(f_0)}=K(\alpha)$ where $\alpha^p\in K$. Since $K_s^{H_M}=K(\mu_p)$, the action of $G_{\Kk}$ on $M^{\vee}$ is trivial. Thus, $H^1(G_{\Kk},M^{\vee})=\Hom(G_{\Kk},M^{\vee})$ and any non-trivial $g\in \Hom(G_{\Kk},M^{\vee})$ corresponds to a degree $p$ Galois extension $K_s^{\ker(g)}/K$. Let $K_s^{\ker(g)}=K_s(\beta)$ with $\Tr_{K(\beta)/K}(\beta)=0$. Let $\sigma\in G_{\Kk}$ be such that $\sigma$ fixes $K(\beta,\mu_p)$ and $\sigma(\alpha)/\alpha=\zeta_p$ for some primitive $p$th root of unity $\zeta_p$. Choose $\rho\in G_{\Kk}$ such that $\rho$ fixes $K(\alpha,\mu_p)$ and $(g(\rho))(f_0(\sigma))=\zeta_p$. We calculate 
\begin{eqnarray*}
h_{ij}=\sum_{\ell=0}^{p-1}{\zeta^{j\ell}\sigma^{\ell}(\alpha^i)}=\sum_{\ell=0}^{p-1}{\zeta^{(i+j)\ell}\alpha^i}.
\end{eqnarray*}
Hence, $h_{ij}=0$ unless $i+j\equiv 0\pmod{p}$. Write $\rho^j(\beta)=\sum_{i=0}^{p-1}{m_{ij}\beta^i}$ for $m_{ij}\in K$. An easy matrix calculation shows that
\begin{eqnarray*}
(h_{1j}m_{ij})_{i,j}(h_{ij})_{i,j}^{-1}=\bordermatrix{
                & \cr
                & 0  &  m_{0(p-1)} &0& \ldots & 0\cr
                & 0  &  m_{1(p-1)} &0& \ldots & 0\cr
                & \vdots & \vdots & \ddots & \vdots\cr
                & 0  &   m_{(p-1)(p-1)} &0  &\ldots & 0}.
\end{eqnarray*}
Now Theorem \ref{mainthm} tells us that the class of $f\cup g$ in $\Br(K)$ is given by the algebra $\cD$ with $K$-basis $\{\beta^iz^j\}_{0\leq i,j\leq p-1}$, where $z^p=\alpha^p\in K$, and we have 
\[z\beta z^{-1}=\sum_{i=0}^{p-1}{m_{i(p-1)}\beta^i}=\rho^{-1}(\beta).\]
So in this case $\cD$ is a cyclic algebra of dimension $p^2$ over $K$.

\paragraph{Acknowledgements}
I would like to thank my Ph.D. supervisor Tim Dokchitser for suggesting this problem and for his excellent support and guidance throughout my Ph.D. I am grateful to Hendrik Lenstra for a very detailed discussion of my work and for many suggestions for its improvement, in particular for pointing out a gap in the proof of Lemma \ref{generators for D} and providing the rest of the argument. I would like to thank Vladimir Dokchitser, Charles Vial, Rishi Vyas and James Newton for many useful discussions. I was supported by an EPSRC scholarship for the duration of this research.

\end{document}